\patchcmd{\subsection}{-.5em}{.5em}{}{}
\newtheorem{theorem}{Theorem}[section]
	\newtheorem*{gursky}{Theorem [Gursky and Malchiodi]}
	\newtheorem*{bonheu}{Theorem [Bonheure and Nascimento]}
\theoremstyle{plain}
\newtheorem{corollary}[theorem]{Corollary}
\newtheorem{definition}[theorem]{Definition}
\newtheorem{lemma}[theorem]{Lemma}
\newtheorem{proposition}[theorem]{Proposition}
\newtheorem{remark}[theorem]{Remark}
\newcommand{\ricc}[0]{\ensuremath{\operatorname{Ric}}}
\newcommand{\ep}[0]{\ensuremath{\varepsilon}}
\newcommand{\geph}[0]{\ensuremath{g+\varepsilon^2 h}}
\newcommand{\RR}[0]{\ensuremath{\mathbb{R}}}
\numberwithin{equation}{section}
\theoremstyle{plain}
\newcommand{\scal}{\operatorname{scal}}
\newcommand{\diver}{\operatorname{div}}
\newcommand{\g}{\mathfrak{g}}
\newcommand{\re}{\mathbb{R}}
\begin{document}
\title[Constant $Q$-curvature]{Multiplicity results for constant $Q$-curvature conformal metrics}

\author{Salom\'{o}n Alarc\'on}
\address{Universidad T\'ecnica Federico Santa Mar\'ia, Av. España 1680, 2340000 Valpara\'iso, Valpara\'iso, Chile}
\email{salomon.alarcon@usm.cl}

\author{Jimmy Petean}
\address{Centro de Investigaci\'{o}n en Matem\'{a}ticas, CIMAT, Calle Jalisco s/n, 36023 Guanajuato, Guanajuato, M\'{e}xico}
\email{jimmy@cimat.mx}

\author{Carolina Rey}
\address{Universidad T\'ecnica Federico Santa Mar\'ia, Av. España 1680, 2340000 Valpara\'iso, Valpara\'iso, Chile}
\email{carolina.reyr@usm.cl}

\thanks{{\it 2020 Mathematics Subject
		Classification}: 35J35, 53C21.\\
	\mbox{\hspace{11pt}}{\it Key words}: Paneitz-Branson type equations, Q-curvature, Lusternik-Schnirelmann
	category\\
	S. Alarcón is partially supported by FONDECYT grant 1211766\\
	J. Petean is supported by  Fondo Sectorial SEP-CONACYT grant	A1-S-45886\\
	C. Rey is supported by FONDECYT grant 3200422.}

\begin{abstract} We prove that certain subcritical Paneitz-Branson type equations on a closed Riemannian manifold
$(M,g)$ have at least $\mathrm{Cat}(M)$ positve solutions, where $\mathrm{Cat}(M)$ is the Lusternik-Schnirelmann
category of $M$. This implies that if $(X,h)$ is a closed positive Einstein manifold then for $\ep >0$ small enough
there are at least $\mathrm{Cat}(M)$ metrics of constant $Q$-curvature in the conformal class of the
Riemannian product $g+\ep h$.
\end{abstract}
\maketitle

\section{Introduction}
Consider a closed Riemannian manifold $(\mathbf{M}, \g)$ of dimension $N \geq 5$. 
The Paneitz operator $P_{\g}$ is defined using a local $\g$-orthonormal frame $\left(e_{i}\right)_{i=1}^{n}$, as
\begin{equation}\label{opPan}
P_{\g} \psi:=\Delta_{\g}^{2} \psi+\frac{4}{N-2} \diver_{\g}\left(\ricc_{\g}\left(\nabla \psi, e_{i}\right) e_{i}\right)-\frac{N^{2}-4 N+8}{2(N-1)(N-2)} \diver_{\g}\left(\scal_{\g} \nabla \psi\right)+\frac{N-4}{2} Q_{\g} \psi ,
\end{equation}
where $\Delta_{\g} u=\diver_{\g}(\nabla_{\g} u)$ is the (negative) Laplace operator on $(\mathbf{M}, \g)$, $\diver_{\g}$ is the divergence, $\scal_{\g}$ is
the scalar curvature and $\mathrm{Ric}_{\g}$ is the Ricci tensor.  $Q_{\g}$ is the $Q$-curvature of $\g$, which is given by:
\begin{equation}\label{Qcurv}
Q_{\g}:=-\frac{1}{2(N-1)} \Delta_{\g} \scal_{\g}-\frac{2}{(N-2)^{2}}\left|\ricc_{\g}\right|^{2}+\frac{N^{3}-4 N^{2}+16 N-16}{8(N-1)^{2}(N-2)^{2}} \scal_{\g}^{2}.
\end{equation}
The Paneitz operator and the related $Q$-curvature were introduced by Stephen Paneitz  \cite{Paneitz} and by Thomas Branson \cite{Branson}. 
We  denote by $\left[\g_{0}\right]$ the conformal class of a given metric $\g_{0}$ in $\mathbf{M}$. 
For practical reasons, we usually write a  conformal metric $\g \in\left[\g_{0}\right]$ as $\g=u^{\frac{4}{N-4}} \g_{0}$, for a positive function $u: \mathbf{M} \rightarrow \mathbb{R}$. 
The Paneitz operator is conformally invariant in the sense that for any $\psi: \mathbf{M} \rightarrow \mathbb{R}$,
\begin{equation*}
P_{\g} \psi=u^{-\frac{N+4}{N-4}} P_{\g_{0}}(u \psi).
\end{equation*}
It follows that we can express the $Q$-curvature of $\g$ in terms of 	$\g_{0}$ and $u$ by:
\begin{equation*}
Q_{\g}=\frac{2}{N-4} P_{\g}(1)=\frac{2}{N-4} u^{-\frac{N+4}{N-4}} P_{\g_{0}}(u).
\end{equation*}
Similarly to the Yamabe problem of finding metrics of constant scalar curvature in a conformal class, there has been great interest in studying the problem of finding metrics of constant $Q$-curvature in a conformal class.

It follows from the previous comments that finding a conformal metric $\g=u^{\frac{4}{n-4}} \g_{0}$ with constant $Q$-curvature amounts to finding a positive function $u$ which solves the fourth order equation
\begin{equation}\label{ConstantQcurv}
P_{\g_{0}} u=\lambda u^{\frac{N+4}{N-4}}, \quad \lambda \in \mathbb{R} .
\end{equation}
We   call this equation the Paneitz-Branson Equation. 	

Note that the $Q$-curvature is defined if $\mathbf{M}$ has dimension greater than 2. 
However, when the dimension is 3 or 4, the constant curvature equation has a different form.  
Some significant results obtained in the $4$-dimensional case are given, for instance, by Sun-Yung A. Chang and Paul C. Yang \cite{Chang}, by Simon Brendle \cite{Brendle} and by Zindine Djadli and Andrea Malchiodi \cite{Djadli}. 
In this work, we restrict our discussion to the case when the dimension is at least 5. 

Previous research has considered the problem of  the existence of metrics of constant $Q$-curvature in dimension $N\geq 5$. 
For example, Jie Qing and David Raske in \cite{Qing} proved the existence of a positive solution of the Paneitz-Branson equation on a locally conformally flat manifold of positive scalar curvature and Poincar\'{e} exponent less than $\frac{N-4}{2}$. 
Meanwhile, Emmanuel Hebey and Fr\'{e}d\'{e}ric  Robert in \cite{Hebey} also proved the existence of a positive solution for locally conformally flat manifolds of positive scalar curvature, assuming that the Paneitz operator and the corresponding Green function were positive. 
Matthew J. Gursky and Andrea Malchiodi  in \cite{Malchiodi} proved the existence of a positive solution under the conditions  that $Q_{\g}$ is non-negative and positive somewhere in $M$ and $\scal_{\g} \geq 0$.

\medskip

Solutions of the Paneitz-Branson equation are the critical points of the functional $\mathcal{P}_{\g}: 
H^2 (\mathbf{M})\setminus \{ 0 \} \rightarrow \re$, given by

$$
\mathcal{P}_{\g} (u) := 
\frac{2}{N-4} \frac{\int_{\mathbf{M}} u\,  P_{\g} (u) \ dv_g }{\left( \int_{\mathbf{M}} |u|^{\frac{2N}{N-4}} dv_g \right)^{\frac{N-4}{N}}  }.
$$

Now consider the constant
$$
Y_4 (\g )  = \inf_{u\in C^\infty(M) , u>0}   \mathcal{P}_{\g} (u) .
$$
Fengbo Hang and Paul C. Yang proved in \cite{Yang} that if $Y_4 (\g)$ is positive, and also $Q_\g >0$ and $\scal_\g >0$ (under some weaker hypotheses, it is only needed that the Yamabe invariant of the conformal class of  $\g$ is positive and that $Q_{\g} \geq 0$ and positive somewhere) then
$Y_4 ( \g) \leq Y_4 (\g^N_0 )$, where $\g_0^N$ is the round metric on the $N$-sphere, and the infimum is always realized by a positive function, which is a solution of the Paneitz-Branson equation.
Other related existence results can be found in \cite{Hebey, Djadli, Esposito, Gursky, Hang, Robert}.  

In the case of the round sphere $(\mathbb{S}^N, \g_0 $), there is a non-compact family of solutions to the Paneitz-Branson equation that comes from the family of conformal diffeomorphisms of the sphere. 
These are all the solutions of the Paneitz-Branson equation on the sphere by a theorem of Chang-Shou Lin \cite{Lin}. 

There are also important results about the compactness of the space of solutions, for instance see \cite{Hebey2, Li, YanYanLi, Malchiodi2}. 
Examples of non-compactness of the space of positive solutions, different from the constant curvature metric on the sphere, were constructed in high dimensions  by Juncheng  Wei and Chunyi  Zhao \cite{Wei}.
Naturally, if $\g$ is Einstein it has constant $Q$-curvature. 
Moreover, J\'{e}r\^{o}me V\'{e}tois \cite{Vetois} proved that if $\g$ is Einstein and it is not isometric to the constant curvature metric on the sphere, then it is, up to multiplication by a constant, the only metric of constant $Q$-curvature in its conformal class. 
This result is an analogue of the classical due to Morio  Obata \cite{Obata} about constant scalar curvature.

There are a few other cases where one can show the non-uniqueness of conformal metrics of constant $Q$-curvature.
One case, which also appears for the Yamabe problem, is that of Riemannian products.
If $(M,g)$ and $(X,h)$ are closed Einstein manifolds with positive Einstein constant, of dimension at least $3$ then
for any $\delta >0$, the Riemannian product $(M\times X , g+\delta h)$ has constant positive $Q$-curvature and constant positive scalar curvature. 
It is easy to compute that as $\delta \rightarrow 0$ or $\delta \rightarrow \infty$ we have that $\mathcal{P}_{g+\delta h} (1) \rightarrow \infty$. 
We are in the conditions of the result mentioned above \cite{Yang}, and therefore in the conformal class of $g+\delta h$, there must be at least one other metric of constant $Q$-curvature, which minimizes $\mathcal{P}_{g+\delta h}$.
Note that a similar argument works for the Yamabe problem, but in that case, one only needs that the scalar curvatures  are constant and positive. 

Renato Bettiol, Paolo Piccione and Yannick Sire \cite{Bettiol} considered appropriate Riemannian submersions and  reduce the equation to basic functions (functions which are constant along the fibers of the submersion). 
Using bifurcation theory for this reduced equation on the canonical variation obtained by varying the size of the fibers (under certain conditions the total spaces of these variations have constant $Q$-curvature), they prove the existence of bifurcation instants, so proving the existence of metrics in the family for  which there are other conformal metrics of constant $Q$-curvature.

\medskip

This paper aims to provide a lower bound for the number of metrics of constant $Q$-curvature for a  Riemannian product $(M\times X, g+\delta h)$ within the family of conformal metrics to $g+\delta h$. 
Using the Lusternik-Schnirelmann category, we   prove the following multiplicity result.
\begin{theorem}\label{mainThm} 
Let $(M ,g)$ be a closed $n$-dimensional Riemannian manifold and $(X,h)$ a closed $m$-dimensional Einstein manifold with Einstein constant $\Lambda_0>0$.
Consider the Riemannian product $(M\times X, \g_{\ep} := g+\ep^2 h )$. 
Assume that $m\geq 3$ or, if $m=2$, that $n\geq 7$. 
Then there exists $\ep_0 >0$ such that for any $\ep \in (0,\ep_0 )$ the Paneitz-Branson equation on $(M\times X, \g_{\ep} )$ has at least $\mathrm{Cat}(M)$ positive solutions.
\end{theorem}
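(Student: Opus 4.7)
The plan is to restrict the variational problem to $X$-invariant (``basic'') functions on $M\times X$, reducing the Paneitz-Branson equation to a subcritical fourth order problem on $M$, and then apply a Benci--Cerami type Lusternik-Schnirelmann argument to count at least $\mathrm{Cat}(M)$ critical points. Writing $N=n+m$ and using the product formulas $\ricc_{\g_\ep}=\ricc_g\oplus\Lambda_0 h$ and $\scal_{\g_\ep}=\scal_g+m\Lambda_0\ep^{-2}$ in (\ref{opPan})--(\ref{Qcurv}), a direct computation shows that for $u\in C^\infty(M)$ viewed as basic on $M\times X$,
\[
P_{\g_\ep} u \;=\; \Delta_g^2 u + L_g u + \ep^{-2} A\,(-\Delta_g u) + \ep^{-4} B\, u + O(\ep^{-2})\, u,
\]
where $L_g$ is a second order operator depending on $\ricc_g$ and $\scal_g$, $A=\frac{(N^2-4N+8)m\Lambda_0}{2(N-1)(N-2)}>0$, and $B=\frac{(N-4)m\Lambda_0^2}{2(N-2)^2}\bigl[\frac{(N^3-4N^2+16N-16)m}{8(N-1)^2}-2\bigr]$. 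An arithmetic check shows that the dimension hypothesis $m\ge 3$, or $m=2$ and $n\ge 7$, is precisely what makes $B>0$, while the elementary inequality $(N+4)/(N-4)<(n+4)/(n-4)$ for $m\ge 1$ makes the nonlinearity $H^2(M)$-subcritical, so any positive solution of the reduced equation on $M$ lifts to a positive basic solution on $M\times X$.

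With $A,B>0$, the reduced equation is the Euler-Lagrange equation of a $C^1$ functional $\mathcal{J}_\ep$ on $H^2(M)$, naturally studied on its Nehari manifold $\mathcal{N}_\ep$. After a rescaling that absorbs the $\ep^{-2}$ and $\ep^{-4}$ powers into $u$ and the Lagrange multiplier, $\mathcal{J}_\ep$ acquires a uniform mountain-pass geometry and satisfies Palais-Smale below a threshold $c_\infty$ determined by the ground-state energy of the Euclidean limit problem $\Delta^2 U+\alpha(-\Delta U)+\beta U=U^{(N+4)/(N-4)}$ on $\re^n$, whose positive radial ground state is unique up to translation and provides the standard concentration profile.

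The multiplicity is then delivered by a Benci--Cerami photography scheme. For each $x\in M$, in geodesic normal coordinates at $x$ one plants a suitably rescaled copy of the Euclidean ground state (cut off at fixed injectivity-radius scale) to produce a test function $\Ue$, and after Nehari projection a continuous map
\[
\Phi_\ep\colon M \longrightarrow \mathcal{N}_\ep\cap\{\mathcal{J}_\ep\le c_\infty+\delta_\ep\}.
\]
A complementary barycenter-type map
\[
\Psi_\ep\colon \mathcal{N}_\ep\cap\{\mathcal{J}_\ep\le c_\infty+\delta_\ep\} \longrightarrow M_r,
\]
where $M_r$ is a tubular neighborhood of an isometric embedding of $M$ in some $\re^k$, is built by tracking the $L^{p+1}$-concentration point of $|v|^{(N+4)/(N-4)}$. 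One shows $\Psi_\ep\circ\Phi_\ep\simeq\mathrm{id}_M$ for small $\ep$, so $\mathrm{Cat}(\mathcal{N}_\ep\cap\{\mathcal{J}_\ep\le c_\infty+\delta_\ep\})\ge\mathrm{Cat}(M)$, and the Lusternik-Schnirelmann principle for $C^1$ functionals on Nehari manifolds yields at least $\mathrm{Cat}(M)$ critical points. A $v^+$-truncation together with elliptic regularity promotes these to positive solutions.

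The main obstacle is the concentration-compactness analysis that underlies both $\Phi_\ep$ and $\Psi_\ep$. Because the operator is fourth order with coefficients degenerating in $\ep$, the standard Struwe-type bubbling decomposition for Palais-Smale sequences must be reproven uniformly in $\ep$: one needs decay and regularity estimates for the rescaled ground states, and must rule out multi-bubble configurations in $\{\mathcal{J}_\ep\le c_\infty+\delta_\ep\}$ by a careful energy accounting that is not supported by any maximum principle. Once this is in place the LS category count follows as in the Yamabe analogue.
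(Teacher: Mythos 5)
Your overall architecture matches the paper's: reduction to basic functions, Nehari manifold, photography/barycenter maps, Lusternik--Schnirelmann count, with the key observations that $(N+4)/(N-4)<(n+4)/(n-4)$ makes the problem $H^2(M)$-subcritical and that the dimension hypothesis is exactly what forces the zero-order constant $B=A_{N,m}>0$. However, there are two places where the proposal as written would fail or is over-engineered.

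The essential gap is the positivity step. You write that ``a $v^+$-truncation together with elliptic regularity promotes these to positive solutions.'' For a fourth-order operator this is not enough: truncation plus bootstrap only gives a nonnegative $C^4$ solution of $P_{\g_\ep}u=(u^+)^q\ge 0$, and unlike the second-order case, the bi-Laplacian part of $P_{\g_\ep}$ does not in general satisfy a maximum principle, so one cannot conclude $u>0$. This is precisely where the paper invokes the Gursky--Malchiodi strong maximum principle for the Paneitz operator, which applies here because the product metric $\g_\ep=g+\ep^2 h$ has $Q_{\g_\ep}>0$ and $\scal_{\g_\ep}>0$ once $\ep$ is small (this in turn uses that $A_{N,m}>0$, i.e.\ the dimension hypothesis). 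Without this ingredient, the critical points found by LS theory might be sign-changing, and you would not be counting positive solutions. Note that your remark elsewhere that the energy accounting ``is not supported by any maximum principle'' is pointing in the wrong direction: a maximum principle is not needed for the concentration step but it is needed, and available, for positivity.

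Secondly, you frame the hard part as a Struwe-type bubbling decomposition for Palais--Smale sequences, ``uniformly in $\ep$'', and excluding multi-bubble configurations below the threshold $c_\infty+\delta_\ep$. This misreads where the difficulty lies. Because the reduced nonlinearity is strictly $H^2(M)$-subcritical, the embedding $H^2(M)\hookrightarrow L^{q+1}(M)$ is compact and the Nehari-restricted functional satisfies Palais--Smale at \emph{all} levels, so no bubbling analysis is needed for compactness. What does require work is showing that sublevel sets $\Sigma_{\ep,{\bf m}_\ep+\delta}$ concentrate in a single small geodesic ball for $\ep,\delta$ small, so that the barycenter map is defined. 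The paper proves this not via concentration--compactness but by a direct splitting argument on the Nehari manifold: an element that spreads mass over two well-separated regions can be cut into two pieces each of which projects to the Nehari manifold with energy close to ${\bf m}_\ep$, forcing the original energy above ${\bf m}_\ep+\delta$. That elementary dichotomy argument, together with the localized Sobolev inequality ensuring a definite amount of $L^{q+1}$ mass in some $2\ep$-ball, is the actual content you need here, not a bubbling decomposition.
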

Here $\mathrm{Cat}(M)$ denotes the Lusternick-Schnirelmann category of $M$, a topological invariant defined as the minimum number of contractible open sets needed to cover it. 
If $M$ is a manifold, $\mathrm{Cat}(M)$ coincides with the minimal number of critical points among all smooth scalar maps $M\rightarrow \re$ (see \cite{Takens}).
The solutions given in the theorem are functions on $M$.  
We will see in Section \ref{section2} that a function $u: M \rightarrow (0,\infty)$ solves the
Paneitz-Branson equation on $(M\times X, \g_{\ep} )$ if and only if 
\begin{equation}\label{mainEq1}
 \Delta^2_g u - \frac{N^{2}-4 N+8}{2(N-1)(N-2)}(m\Lambda_0 \varepsilon^{-2} ) \Delta_{g} u  +\frac{N-4}{2}  \ep^{-4} A_{N,m} u  +\varphi (u) +  \frac{N-4}{2} ( f_0 + \ep^{-2} f_2 ) u =  u^{\frac{N+4}{N-4}}
\end{equation}
where $N=n+m$ is the dimension of the product manifold $M\times X$, $\varphi (u)$ is a second order operator on $u$ which verifies 
$$\int_M \varphi (u) \,u \,dv_g \leq C \int_M | \nabla u |^2 dv_g ,$$
for some constant $C$, $A_{N,m}$ is a positive constant that depends on $N,m$, and  $f_0$ and $f_2$ are 
certain functions on $M$.

\medskip

In general, for  a closed $n$-dimensional Riemannian manifold $({M},g)$ we can consider an equation of the form
\begin{equation*}
\Delta_g^2 u +f(x) \Delta_g u + \phi (u) +g(x)u  = h(x) u^q  ,
\end{equation*}
\noindent
for functions $f, g , h$ on M, $q\in (1,\infty )$, and a second order operator $\phi$.
 This equation is called critical if $q=\frac{n+4}{n-4}$, subcritical if  $q < \frac{n+4}{n-4}$ and supercritical if  $q > \frac{n+4}{n-4}$ (if $n\leq 4$ it is considered subcritical for any $q>1$).  
For instance,  in \cite[Section 2]{Hebey} the authors study the critical case 
when $f, g$ are constant, $\phi =0$.

Consider the equation
\begin{equation}\label{constant}
\Delta_g^2 u -b\ep^{-2} \Delta_g u +  a \ep^{-4}u  = u^q \quad \text{on } M
\end{equation}
Assume that $a,b >0$. 
For $\ep$ small Equation \eqref{mainEq1} is, in certain sense, close to Equation \ref{constant}. 
It will be easy to check along the proof of Theorem \ref{mainThm}  that it also gives a proof of
\begin{theorem}\label{mainThm2}
If $b^2 \geq 4 a$ and $q<\frac{n+4}{n-4}$ (if $n>4$),  then there exists $\ep_0 >0$ such that for any 
$\ep \in (0,\ep_0 )$, Equation \eqref{constant} has at least $\mathrm{Cat}(M)$ positive solutions. 
\end{theorem}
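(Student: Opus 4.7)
Equation \eqref{constant} is a stripped-down version of \eqref{mainEq1}, without the second-order operator $\varphi(u)$ or the lower-order potential, so the Lusternik--Schnirelmann / photography argument used to prove Theorem~\ref{mainThm} goes through essentially verbatim (and with fewer auxiliary estimates). The plan is to work with the functional on $H^2(M)$ given by
$$J_\varepsilon(u)=\frac{1}{2}\int_M\!\Big((\Delta_g u)^2+b\varepsilon^{-2}|\nabla_g u|^2+a\varepsilon^{-4}u^2\Big)\,dv_g-\frac{1}{q+1}\int_M (u^+)^{q+1}\,dv_g,$$
whose positive critical points solve \eqref{constant}. The subcritical growth $q<(n+4)/(n-4)$ yields compactness of $H^2(M)\hookrightarrow L^{q+1}(M)$, so $J_\varepsilon$ satisfies Palais--Smale on its Nehari manifold $\mathcal{N}_\varepsilon=\{u\neq 0:\langle J_\varepsilon'(u),u\rangle=0\}$; set $c_\varepsilon=\inf_{\mathcal{N}_\varepsilon}J_\varepsilon$.

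The next step is to identify the limit problem. Since $a,b>0$ and $b^2\geq 4a$, the polynomial $\lambda^2-b\lambda+a$ has two positive roots $\alpha,\beta$, giving the factorisation
$$\Delta_g^2-b\varepsilon^{-2}\Delta_g+a\varepsilon^{-4}=(-\Delta_g+\alpha\varepsilon^{-2})(-\Delta_g+\beta\varepsilon^{-2}).$$
Under the rescaling $u(x)=\varepsilon^{-4/(q-1)}v(x/\varepsilon)$ in a normal chart at $y\in M$ one formally obtains the constant-coefficient problem
$$(-\Delta+\alpha)(-\Delta+\beta)\,v=v^q\ \text{ on } \mathbb{R}^n,\qquad v>0,\; v\in H^2(\mathbb{R}^n),$$
which admits a radial, positive, exponentially decaying ground state: both factors have strictly positive zero-order term, so each carries a positive Green function and the maximum principle applies. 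Let $c_\infty$ be its energy; a rescaling computation gives $\varepsilon^{-n}c_\varepsilon\to c_\infty$ as $\varepsilon\to 0$.

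The core of the proof is then the standard photography argument. For each $y\in M$ I would build $\Phi_\varepsilon(y)\in\mathcal{N}_\varepsilon$ by cutting off and rescaling the limit ground state at $y$ via the exponential map, and check $\sup_{y\in M}\varepsilon^{-n}J_\varepsilon(\Phi_\varepsilon(y))\to c_\infty$. Conversely I would define a barycenter map $\beta_\varepsilon$ on $\mathcal{N}_\varepsilon\cap\{\varepsilon^{-n}J_\varepsilon\leq c_\infty+\delta\}$ by taking the $L^{q+1}$-weighted centre of mass after an isometric embedding $M\hookrightarrow\mathbb{R}^L$, post-composed with the retraction of a small tubular neighbourhood onto $M$. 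For $\delta$ and $\varepsilon$ small, one shows $\beta_\varepsilon\circ\Phi_\varepsilon:M\to M$ is homotopic to the identity, and the usual Lusternik--Schnirelmann category inequality on the sublevel set produces at least $\mathrm{Cat}(M)$ critical points of $J_\varepsilon$. Positivity of the critical points is recovered from the $u^+$ truncation together with the iterated maximum principle afforded by the two second-order factors.

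The main obstacle is the concentration-compactness analysis underlying the barycenter: one must show that any sequence $u_\varepsilon\in\mathcal{N}_\varepsilon$ with $\varepsilon^{-n}J_\varepsilon(u_\varepsilon)\to c_\infty$ concentrates at a single point of $M$ and, after rescaling, converges in $L^{q+1}$ to a ground state of the limit problem. Ruling out splitting of mass across different scales of $\varepsilon$, and propagating the necessary elliptic estimates through the fourth-order operator by peeling off the two second-order factors $(-\Delta_g+\alpha\varepsilon^{-2})$ and $(-\Delta_g+\beta\varepsilon^{-2})$, is where the assumption $b^2\geq 4a$ is genuinely used and where the bulk of the technical work lies; once this step is in hand, the category count is mechanical and Theorem~\ref{mainThm2} emerges as a direct byproduct of the proof of Theorem~\ref{mainThm}.
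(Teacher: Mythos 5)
Your proposal follows the same Lusternik--Schnirelmann / photography strategy as the paper's proof of Theorem \ref{mainThm}: Nehari manifold and Palais--Smale from the subcritical embedding $H^2(M)\hookrightarrow L^{q+1}(M)$, the map $\Phi_\varepsilon$ built from a rescaled and cut-off ground state of the limit equation, a barycenter map on low sublevel sets, and the homotopy $\beta_\varepsilon\circ\Phi_\varepsilon\simeq \mathrm{id}_M$. That is exactly the path the paper takes, and indeed the paper itself declares that the proof of Theorem \ref{mainThm} also proves Theorem \ref{mainThm2}. One substantive point you add, and which the paper's terse ``same proof'' claim really does need, is the positivity argument: Lemma \ref{CriticalPoints} relies on the Gursky--Malchiodi maximum principle for the genuine Paneitz operator on $M\times X$, which has no direct analogue for the constant-coefficient model operator $\Delta_g^2 - b\ep^{-2}\Delta_g + a\ep^{-4}$. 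Your factorisation $(-\Delta_g+\alpha\varepsilon^{-2})(-\Delta_g+\beta\varepsilon^{-2})$ with $\alpha,\beta$ the roots of $\lambda^2-b\lambda+a$ (real precisely when $b^2\geq 4a$), followed by the iterated second-order maximum principle, is precisely the right substitute, and it makes clear where the hypothesis $b^2\geq 4a$ is used. Where you are too brisk is the \emph{existence} of the limit ground state: the factorisation gives positivity of a solution once you have one, but it does not by itself produce a minimiser of $E_{\alpha,\beta,q}$ on the Nehari manifold in $H^2(\mathbb{R}^n)$; that step is the content of the Bonheure--Nascimento theorem \cite{BN} (their hypothesis $\beta\geq 2\sqrt{\alpha}$ is exactly $b^2\geq 4a$ in your notation), which the paper cites and on which Sections \ref{section4}--\ref{section5} (in particular Lemma \ref{lemma41} and Theorem \ref{theorem4}) are built. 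You should invoke that result rather than assert existence from the Green-function remark. The remaining differences (your $J_\varepsilon$ lacking the paper's overall $\varepsilon^{-n}$ normalisation, and your embedding-plus-retraction barycenter versus the paper's intrinsic centre of mass from \cite{Petean}) are cosmetic and do not affect the argument.
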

In Section \ref{section2} we will give the explicit formula for the Paneitz operator $P_{ \g_{\ep} }$. In
Section \ref{section3} we will see that the positive solutions of the Paneitz-Branson equation for 
$\g_{\ep}$ are precisely the critical points of the functional 
$$
J_{\ep,q}^{g,+} (u) =
\frac{1}{\ep^n}  \int_M\!  \Big( \frac{\ep^4 }{2}  (\Delta_g u )^2 + \ep^2 \frac{b_{N,m}}{2} | \nabla u |^2 
+\frac{a_{N,m} }{2}  u^2 +\ep^4 \varphi (u) u +\frac{\ep^2}{2} (  \ep^2 f_0(x)+f_2(x) ) u^2 - 
\frac{ (u^+ )^{q+1}}{q+1}\Big) dv_g\!
$$
restricted to  the associated Nehari manifold $\mathcal{N}^g_{\varepsilon}$. In the functional
$u^+ = \max \{ u , 0 \}$.
We will also show that $
J_{\ep,q}^{g,+}$ restricted to $\mathcal{N}^g_{\varepsilon}$ is bounded from below and satisfies the Palais-Smale
condition. By Lusternik-Schnirelmann theory it follows that for any $d$ the number of positive solutions of the Paneitz-Branson
equation is at least $\mathrm{Cat}(   \Sigma_{\ep , d}  )$, 
where $\Sigma_{\ep , d} = \{ u \in \mathcal{N}^g_{\varepsilon} : 
J^{g,+}_{\varepsilon,q} (u) \leq d \}$. Let 
$$
{\bf m}_{\ep} := \inf_{u \in \mathcal{N}^g_{\ep} } J^{g,+}_{\ep , q} (u) .
$$

The rest of the article is devoted to prove that for $\ep , \delta$ small enough we have that 
$\mathrm{Cat}(   \Sigma_{\ep ,{\bf m}_{\ep} + \delta }  )   \geq \mathrm{Cat}(M)$. 
To prove this inequality we will use a method introduced by Vieri Benci and Giovanna Cerami in \cite{BenciCerami} to study multiplicity results for solutions of nonlinear elliptic problems in Euclidean domains. 
The method was later used also to study similar problems in Riemannian manifolds, for instance in \cite{Benci}. 
It is sometimes called the photography method, see for instance \cite{N} where there is a very clear and general description of the method.

In Section \ref{section4} we study the limit equation and functionals on  $\re^n$. 
A positive, minimizing, solution of this limit equation is used later to construct, for any $x\in M$ a function in 
${\bf p} (x) \in   \Sigma_{\ep ,{\bf m}_{\ep} + \delta }$, which is concentrated around $x$.  

In Section \ref{section6} we show that  for $\ep , \delta$ small enough, functions $u\in \Sigma_{\ep ,{\bf m}_{\ep} + \delta }$ are concentrated around some point. 
This is used in Section \ref{section7} to show that there is a well defined center of mass function ${\bf b}: \Sigma_{\ep ,{\bf m}_{\ep} + \delta } \rightarrow M$. 
It is easily checked that  ${\bf b} \circ {\bf p} : M \rightarrow M$ is close, and therefore  homotopic to the identity. This implies the inequality $\mathrm{Cat}(   \Sigma_{\ep ,{\bf m}_{\ep} + \delta }  )   \geq \mathrm{Cat}(M)$.

\section{Constant $Q$-curvature on a product manifold}\label{section2}

In this section, we study the Paneitz operator and the $Q$-curvature in the case of a product manifold. 
Here we establish the necessary hypotheses to apply the techniques mentioned above.

Let $(M,g)$ be a closed $n$-dimensional Riemannian manifold  and $(X,h)$ a closed $m$-dimensional Einstein  manifold  with positive Einstein constant $\Lambda_0$, {\it i.e.}, its scalar curvature is $\scal_h =m \Lambda_0$. 
We are interested in positive solutions of the constant $Q$-curvature equation \eqref{ConstantQcurv} for the product manifold  $(M\times X, \mathfrak{g}_{\varepsilon} :=g+\ep^2 h)$, which is given by
$$
P_{\mathfrak{g}_{\varepsilon}} u=\lambda_\ep u^{\frac{n+m+4}{n+m-4}}, \quad \lambda_\ep \in \RR ,
$$
where $P_{\mathfrak{g}_{\varepsilon}}$ is the Paneitz operator defined in (\ref{opPan}).  
We  consider the case $ \lambda_\ep >0$ and so we can renormalize the previous equation as 
$$
P_{\mathfrak{g}_{\varepsilon}} u=   u^{\frac{n+m+4}{n+m-4}} .
$$
We put $N:=n+m$ and compute the $Q$-curvature for the product metric. 
\begin{lemma}\label{Product$Q$-curv}Let $(M,g)$ be a closed $n$-dimensional Riemannian manifold and let $(X,h)$ be a closed $m$-dimensional Einstein manifold with positive Einstein constant $\Lambda_0$. 
If $N=n+m$, then the  $Q$-curvature for the product manifold  $(M\times X, \mathfrak{g}_{\varepsilon})$ is
\begin{equation*}
\begin{aligned}
Q_{\mathfrak{g}_{\varepsilon}}&= -\frac{1}{2(N-1)} \Delta_{g} \scal_{g} -\frac{2}{(N-2)^{2}}  \left|\mathrm{Ric}_{g}\right|^{2} +\frac{N^{3}-4 N^{2}+16 N-16}{8(N-1)^{2}(N-2)^{2}} \scal_{g}^{2} \\
&\quad+  \frac{1}{\ep^2}  \ \frac{N^{3}-4 N^{2}+16 N-16}{8(N-1)^{2}(N-2)^{2}} 2m \scal_{g} \Lambda_0 +  \frac{1}{\ep^4} \left( -\frac{2 m\Lambda_0^2}{(N-2)^{2}}  +\frac{N^{3}-4 N^{2}+16 N-16}{8(N-1)^{2}(N-2)^{2}}  m^2 \Lambda_0^2 \right).
\end{aligned}
\end{equation*}
\end{lemma}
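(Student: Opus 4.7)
The plan is to reduce the computation to a direct application of the $Q$-curvature formula \eqref{Qcurv} together with three standard facts: how Ricci and scalar curvature behave under a constant rescaling of the metric, how they behave under a Riemannian product, and the Einstein condition on $(X,h)$.

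First I would record the scaling identities. Writing $\tilde h := \varepsilon^2 h$ we have $\Ric_{\tilde h} = \Ric_h$ as a $(0,2)$-tensor, and since the inverse metric scales as $\tilde h^{ij} = \varepsilon^{-2} h^{ij}$, the pointwise norm of any $(0,2)$-tensor transforms as $|\cdot|^2_{\tilde h} = \varepsilon^{-4}|\cdot|^2_h$. In particular $\scal_{\tilde h} = \mathrm{tr}_{\tilde h}\Ric_{\tilde h} = \varepsilon^{-2}\scal_h$. Next, for the Riemannian product of $(M,g)$ and $(X,\tilde h)$, Ricci decomposes block-diagonally and
\[
\scal_{\mathfrak{g}_\varepsilon} = \scal_g + \scal_{\tilde h}, \qquad |\Ric_{\mathfrak{g}_\varepsilon}|^2 = |\Ric_g|_g^2 + |\Ric_{\tilde h}|_{\tilde h}^2,
\]
while for any function depending only on the $M$-variable, $\Delta_{\mathfrak{g}_\varepsilon}$ reduces to $\Delta_g$; in particular $\Delta_{\mathfrak{g}_\varepsilon}\scal_{\mathfrak{g}_\varepsilon}=\Delta_g \scal_g$, because the piece coming from $X$ is a constant.

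Now the Einstein hypothesis $\Ric_h = \Lambda_0 h$ yields $\scal_h = m\Lambda_0$ and $|\Ric_h|^2_h = \Lambda_0^2 |h|^2_h = m\Lambda_0^2$. Combining with the rescaling gives $\scal_{\tilde h}=\varepsilon^{-2} m\Lambda_0$ and $|\Ric_{\tilde h}|^2_{\tilde h}=\varepsilon^{-4} m \Lambda_0^2$. Substituting into \eqref{Qcurv} applied to $\mathfrak{g}_\varepsilon$,
\[
Q_{\mathfrak{g}_\varepsilon} = -\frac{1}{2(N-1)}\Delta_g \scal_g - \frac{2}{(N-2)^2}\bigl(|\Ric_g|^2 + \varepsilon^{-4} m\Lambda_0^2\bigr) + \frac{N^3-4N^2+16N-16}{8(N-1)^2(N-2)^2}\bigl(\scal_g+\varepsilon^{-2}m\Lambda_0\bigr)^2.
\]
Expanding the square $(\scal_g + \varepsilon^{-2} m\Lambda_0)^2 = \scal_g^2 + 2\varepsilon^{-2} m\Lambda_0 \scal_g + \varepsilon^{-4} m^2\Lambda_0^2$ and regrouping by powers of $\varepsilon^{-2}$ yields exactly the three lines of the claimed identity.

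There is no genuine obstacle; the whole argument is a careful bookkeeping of scaling weights. The one point that warrants care is the correct scaling of $|\Ric_h|^2$ under the conformal rescaling $h\mapsto \varepsilon^2 h$ (it is $\varepsilon^{-4}$, not $\varepsilon^{-2}$, because the norm involves two factors of the inverse metric), since a sign or exponent slip there would corrupt the $\varepsilon^{-4}$-coefficient. Once that is handled correctly, the remaining steps are additive and match the formula term by term.
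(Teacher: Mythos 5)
Your proof is correct and follows essentially the same approach as the paper: substitute the product decompositions of $\scal$, $|\Ric|^2$ and $\Delta\scal$, together with the $\varepsilon^{-4}$ scaling of $|\Ric_h|^2$ and the Einstein condition, directly into formula \eqref{Qcurv} and collect by powers of $\varepsilon^{-2}$. The paper simply records the key identity $|\Ric_{\mathfrak{g}_\varepsilon}|^2=|\Ric_g|^2+\varepsilon^{-4}m\Lambda_0^2$ and calls the rest ``straightforward calculations''; you spell those calculations out, including the correct treatment of $\Delta_{\mathfrak{g}_\varepsilon}\scal_{\mathfrak{g}_\varepsilon}=\Delta_g\scal_g$.
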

\begin{proof} 
By using (\ref{Qcurv}) and  the fact that
$$ 
\left|\mathrm{Ric}_{\mathfrak{g}_{\varepsilon}}\right|^{2} = \left|\mathrm{Ric}_{g}\right|^{2} + \frac{1}{\ep^4} \left|\mathrm{Ric}_{h}\right|^{2}  =  \left|\mathrm{Ric}_{g}\right|^{2} + \frac{1}{\ep^4} m\Lambda_0^2,
$$
straightforward calculations leads to the claim. 
\end{proof}

Now, calling
\begin{equation}
	\begin{aligned}
f_0 &:=  -\frac{1}{2(N-1)} \Delta_{g} \scal_{g} -\frac{2}{(N-2)^{2}}\left|\mathrm{Ric}_{g}\right|^{2} +
\frac{N^{3}-4 N^{2}+16 N-16}{8(N-1)^{2}(N-2)^{2}}  \left( \scal_{g}^{2}  \right) , \label{f0f2}\\ 
f_2 &:= \frac{N^{3}-4 N^{2}+16 N-16}{4(N-1)^{2}(N-2)^{2}} m \scal_{g} \Lambda_0 , \end{aligned}
\end{equation}
and
\begin{equation}\label{ANm}
A_{N,m} :=\frac{m\Lambda_0^2}{(N-2)^2}  \left(  \frac{N^{3}-4 N^{2}+16 N-16}{8(N-1)^{2}} m -2 \right), 
\end{equation}
we get
$$
Q_{\mathfrak{g}_{\varepsilon}} = f_0 + \ep^{-2} f_2 + \ep^{-4} A_{N,m} .
$$
We get the following simple computation:
\begin{lemma}\label{A}
If $m=2$ then $A_{N,2} <0$ for $N=5, 6, 7,8$ and $A_{N,2} >0$ for $N\geq 9$. 
If $m\geq 3$, then $A_{N,m}>0$. 
\end{lemma}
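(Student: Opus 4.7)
The plan is purely computational. First I would note that, since $\Lambda_0>0$, the prefactor $m\Lambda_0^2/(N-2)^2$ in the definition of $A_{N,m}$ is strictly positive, so $A_{N,m}$ has the same sign as
$$R_m(N) := m(N^3 - 4N^2 + 16N - 16) - 16(N-1)^2,$$
obtained by clearing the positive denominator $8(N-1)^2$ inside the bracket. So the whole lemma reduces to analyzing the sign of the one-parameter family of cubics $R_m$.

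For the first statement ($m=2$), a short expansion gives $R_2(N) = 2(N^3 - 12N^2 + 32N - 24) =: 2 P(N)$. Evaluating directly yields $P(5) = -39$, $P(6) = -48$, $P(7) = -45$, $P(8) = -24$, $P(9) = 21$, which already establishes the claim for $N \in \{5,6,7,8\}$ and for $N=9$. To extend to all $N \geq 9$ I would differentiate: $P'(N) = 3N^2 - 24N + 32$ has larger root $4 + \tfrac{4\sqrt{3}}{3} \approx 6.31 < 9$, so $P$ is strictly increasing on $[9,\infty)$; combined with $P(9)>0$ this yields $A_{N,2}>0$ for every $N\geq 9$.

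For the second statement ($m\geq 3$, with $N \geq 5$) I would observe that $N^3 - 4N^2 + 16N - 16 \geq 89 > 0$ on $[5,\infty)$, so $m\mapsto R_m(N)$ is strictly increasing, and it suffices to handle the case $m=3$. Here $R_3(N) = 3N^3 - 28N^2 + 80N - 64$, giving $R_3(5) = 11 > 0$, while $R_3'(N) = 9N^2 - 56N + 80$ has its larger root exactly at $N=4$; hence $R_3$ is strictly increasing on $[5,\infty)$ and $R_3(N) > 0$ throughout that range.

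The whole argument is elementary polynomial arithmetic, and the only step beyond direct evaluation is reducing an infinite range of $N$ to a finite check, which is handled by locating the larger real root of $P'$ and of $R_3'$ respectively. There is no substantive obstacle.
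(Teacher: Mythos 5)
Your proof is correct and follows the same route as the paper: both reduce the sign of $A_{N,m}$ to that of $m(N^3 - 4N^2 + 16N - 16) - 16(N-1)^2$ after clearing the positive prefactor and denominator, check $m=2$ directly, and for $m\geq 3$ use monotonicity in $m$ to reduce to $m=3$. The paper leaves the final inequalities as ``clearly holds'' and ``straightforward calculations''; you simply supply the derivative analysis that makes those assertions rigorous, which is a welcome filling-in of detail rather than a different method.
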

\begin{proof} 
Since we have a closed $m$-dimensional Einstein manifold with positive Einstein constant $\Lambda_0$, we must have $m\geq 2$.

Assume first that $m=2$. From (\ref{ANm}), we have that $A_{N,2} >0$ if and only if 
$$
N^3 -4N^2 +16N-16 >8(N-1)^2.
$$
This inequality clearly holds if $N\geq 9$, whereas if $N=5,6,7,8$, we get  
$A_{N,2} <0$. 

Now, let $m\geq3$. Then, $A_{N,m}>0$ if and only if 
$$
m( N^3 -4N^2 +16N-16)  >16(N-1)^2.
$$
The expression on the left is clearly increasing with $m$ and the lemma follows from straightforward calculations.
\end{proof}

By restricting our study to functions that depend only on the first factor, $u:M\rightarrow \RR$, we obtain the following expression for the restricted Paneitz operator: 
\begin{equation*}
P_{\mathfrak{g}_{\varepsilon}} u=\Delta_{g}^{2} u+\frac{4}{N-2} \diver_{g}\left(\ricc_{g}\left(\nabla u, e_{i}\right) e_{i}\right)
-\frac{N^{2}-4 N+8}{2(N-1)(N-2)} \diver_{\mathfrak{g}_{\varepsilon}}\left(\scal_{\mathfrak{g}_{\varepsilon}} \nabla u \right)+\frac{N-4}{2} Q_{\mathfrak{g}_{\varepsilon}} u 
\end{equation*}
for a $g$-orthonormal frame $\left\{e_{i}\right\}_{i=1}^{n}$. Then, since $\mathfrak{g}_{\varepsilon}=g+\varepsilon^2 h$, we get
\begin{equation}\label{Pgep}
 \begin{array}[c]{ll} P_{\geph} u
 \displaystyle=\Delta_{g}^{2} u+\dfrac{4}{N-2} \diver_{g}\left(\ricc_{g}\left(\nabla u, e_{i}\right) e_{i}\right) -\dfrac{N^{2}-4 N+8}{2(N-1)(N-2)}  \Big(   \diver_{g}\left(\scal_{g} \nabla u \right) \medskip \\
 \displaystyle\qquad\qquad\quad + \varepsilon^{-2}m\Lambda_0  \Delta_g u \Big) +\dfrac{N-4}{2} ( f_0 + \ep^{-2} f_2 + \ep^{-4} A_{N,m}  ) u.\end{array}
\end{equation}
Finally, the constant $Q$-curvature equation \eqref{ConstantQcurv} for the
product manifold  $(M\times X, g+\ep^2 h)$ for a function $u:M\rightarrow \RR$ is
\begin{equation} \label{equCQC}
\Delta^2_g u - \frac{N^{2}-4 N+8}{2(N-1)(N-2)} \frac{m\Lambda_0}{ \varepsilon^2} \Delta_{g} u  +\frac{N-4}{2}  \frac{ A_{N,m}}{\ep^4} u  +\varphi (u) +  \frac{N-4}{2} ( f_0 + \ep^{-2} f_2 ) u =  u^{\frac{N+4}{N-4}},
\end{equation}
where $\varphi$ is a linear second order operator given by
\begin{equation}\label{PhiDefinition}
	\varphi (u):=\frac{4}{N-2} \diver_{g}\left(\ricc_{g}\left(\nabla u, e_{i}\right) e_{i} \right)  -\frac{N^{2}-4 N+8}{2(N-1)(N-2)}    \diver_{g}\left(\scal_{g} \nabla u \right).
\end{equation}
Next we prove an elementary property of this operator.
\begin{lemma}
Let $\varphi$ be the operator defined in \eqref{PhiDefinition}. There exists a positive constant $C$
such that
\begin{equation*}\label{PHIineq}
	\int_M \varphi (u) \,u \,dv_g \leq C\int_M | \nabla u |^2 dv_g .
\end{equation*}
\end{lemma}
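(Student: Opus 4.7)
The plan is to integrate by parts on the closed manifold $M$, pushing the divergences off $u$, and then estimate the resulting curvature integrals using that $\Ric_g$ and $\scal_g$ are bounded on the compact manifold $M$.

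First I would observe that the vector field $\Ric_g(\nabla u, e_i)e_i$ (summed over the local orthonormal frame) is nothing but $\Ric_g^{\sharp}(\nabla u)$, the Ricci tensor applied as an endomorphism to the gradient. Since $M$ is closed, the divergence theorem gives
\begin{equation*}
\int_M \diver_g\bigl(\Ric_g(\nabla u,e_i)e_i\bigr)\, u \, dv_g = -\int_M \langle \Ric_g^{\sharp}(\nabla u), \nabla u\rangle \, dv_g = -\int_M \Ric_g(\nabla u,\nabla u)\, dv_g,
\end{equation*}
and similarly
\begin{equation*}
\int_M \diver_g(\scal_g \nabla u)\, u\, dv_g = -\int_M \scal_g |\nabla u|^2 \, dv_g.
\end{equation*}

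Combining these with the definition of $\varphi$ yields the identity
\begin{equation*}
\int_M \varphi(u)\, u\, dv_g = -\frac{4}{N-2}\int_M \Ric_g(\nabla u,\nabla u)\, dv_g + \frac{N^2-4N+8}{2(N-1)(N-2)} \int_M \scal_g |\nabla u|^2 \, dv_g.
\end{equation*}
Since $M$ is compact, the operator norm of $\Ric_g$ at each point is bounded by a uniform constant $K_1$ and $|\scal_g|\leq K_2$ for some constants. Hence $|\Ric_g(\nabla u,\nabla u)|\leq K_1|\nabla u|^2$ pointwise, and the displayed identity gives
\begin{equation*}
\int_M \varphi(u)\, u\, dv_g \leq \left(\frac{4K_1}{N-2} + \frac{(N^2-4N+8)K_2}{2(N-1)(N-2)}\right)\int_M |\nabla u|^2\, dv_g,
\end{equation*}
which is the desired inequality with $C$ given by the bracketed expression.

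There is no significant obstacle: the lemma is essentially a one-line consequence of integration by parts once one recognizes the vector field $\Ric_g(\nabla u, e_i)e_i$ as $\Ric_g^{\sharp}(\nabla u)$. The only care needed is to check that the sum over $i$ is frame-independent (it is, being a contraction) so the expression makes global sense on $M$, and to note that compactness of $M$ is what converts the pointwise curvature bounds into the uniform constant $C$.
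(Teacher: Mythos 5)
Your proposal is correct and follows essentially the same argument as the paper: integrate by parts via the divergence theorem on the closed manifold, then use compactness to bound $\Ric_g$ and $\scal_g$ pointwise so that the resulting curvature integrals are controlled by $\int_M |\nabla u|^2\, dv_g$. In fact you are a bit more careful with the sign produced by integration by parts (the paper's middle line drops the minus sign), though this is immaterial for the one-sided inequality being proved.
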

\begin{proof}
Using the divergence theorem, we have
\begin{equation*}
\int_M \varphi (u)\, u\, dv_g
\begin{array}[t]{l}
\displaystyle  =\int_M \diver_{g}\left(\frac{4}{N-2}\left(\ricc_{g}\left(\nabla u, e_{i}\right) e_{i} \right)   -\frac{N^{2}-4 N+8}{2(N-1)(N-2)}  \scal_{g} \nabla u \right)u \ dv_g\medskip\\
\displaystyle =\int_M \left(\frac{4}{N-2}g_{ij}\ricc_{g}\left(\nabla u, e_i \right)(\nabla u)^i   -\frac{N^{2}-4 N+8}{2(N-1)(N-2)}  \scal_{g} |\nabla u|^2 \right) dv_g \medskip\\
\displaystyle \leq C \int_M |\nabla u|^2 dv_g,
\end{array}
\end{equation*}
where $C=C(m, n, g)$.
\end{proof}

Let 
\begin{equation}\label{Def_b_a}
b_{N,m}:=\frac{N^{2}-4 N+8}{2(N-1)(N-2)}m\Lambda_0,\quad \text{and }\quad a_{N,m}:= \frac{N-4}{2} A_{N,m}, 
\end{equation}
with $ A_{N,m}  $ as in (\ref{ANm}).
Then, from (\ref{Pgep}), we get
\begin{equation*}
	P_{\geph} u=\Delta_{g}^{2} u -b_{N,m} \ep^{-2} \Delta_g u +a_{N,m} \ep^{-4} u +
	\varphi (u) +  \frac{N-4}{2} ( f_0 + \ep^{-2} f_2 ) u,
\end{equation*}
where $f_0$ and $f_2$ are given by (\ref{f0f2}). 
Consequently, after renormalization, we can rewrite the equation (\ref{equCQC}) as
\begin{equation} \label{mainEq}
\ep^4\Delta^2_g u - \ep^2 b_{N,m} \Delta_{g} u  +a_{N,m} u  +\ep^4\varphi (u) +  \frac{N-4}{2} ( \ep^4 f_0 + \ep^{2} f_2 ) u =  u^{\frac{N+4}{N-4}}.
\end{equation}
Observe that if $\Lambda_0>0$ then  $b_{N,m}>0$, and by Lemma \ref{A},  $a_{N,m} >0$ if $m\geq 3$ or $m=2$ and $N\geq 9$. 
We also get the following result.
\begin{lemma} Let $b_{N,m}$ and   $a_{N,m}$ be the constants defined in \eqref{Def_b_a}. Then $m\geq 3$, or $m=2$ and $N\geq 9$, imply that
	$b_{N,m}>2\sqrt{a_{N,m}}$.
\end{lemma}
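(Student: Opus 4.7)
The plan is to reduce the inequality $b_{N,m}>2\sqrt{a_{N,m}}$ to an equivalent polynomial inequality in $N$ and $m$. First I would observe that under the stated hypothesis, Lemma \ref{A} guarantees $A_{N,m}>0$, so $a_{N,m}=\frac{N-4}{2}A_{N,m}>0$ and $\sqrt{a_{N,m}}$ is a positive real number. Since $\Lambda_0>0$ also forces $b_{N,m}>0$, the claim is equivalent to the quadratic inequality $b_{N,m}^2>4a_{N,m}$, and the whole lemma turns into an algebraic assertion verifiable by direct computation. This splits the proof cleanly into a positivity step (handled by the previous lemma) and an algebraic step.

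Next I would substitute the definitions \eqref{Def_b_a} and \eqref{ANm} and clear the common positive denominator $4(N-1)^2(N-2)^2$. The inequality $b_{N,m}^2-4a_{N,m}>0$ then takes the form
\begin{equation*}
m^2\bigl[(N^2-4N+8)^2-(N-4)(N^3-4N^2+16N-16)\bigr]+16(N-4)(N-1)^2\,m>0.
\end{equation*}
The heart of the argument, and essentially the only nonroutine step, is the polynomial identity
\begin{equation*}
(N^2-4N+8)^2-(N-4)(N^3-4N^2+16N-16)=16N,
\end{equation*}
which I would check by expanding both quartics in $N$ and comparing coefficients: the leading three coefficients and the constant term agree, and the linear terms differ by exactly $16N$. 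Once this identity is plugged in, the left-hand side collapses to $16m\bigl[Nm+(N-4)(N-1)^2\bigr]$, whose positivity is immediate for $N\geq 5$ and $m\geq 2$.

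The main obstacle I anticipate is simply spotting the striking cancellation recorded in the polynomial identity. Without it the inequality appears to depend delicately on the joint size of $m$ and $N$, so one might worry whether extra subcases (matching the subcases $m\geq 3$ versus $m=2,\,N\geq 9$) are needed. In fact the whole dependence on $m,N$ is absorbed into that single identity, and the hypothesis of the lemma enters only via Lemma \ref{A} to justify taking the square root of $a_{N,m}$; beyond that no further case analysis is required.
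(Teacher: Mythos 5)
Your proof is correct and follows essentially the same route as the paper: use Lemma \ref{A} to secure $a_{N,m}>0$, then reduce $b_{N,m}>2\sqrt{a_{N,m}}$ to $b_{N,m}^2-4a_{N,m}>0$ and verify it by direct expansion, with the polynomial identity $(N^2-4N+8)^2-(N-4)(N^3-4N^2+16N-16)=16N$ doing the key cancellation. If anything, your computation is more careful than the paper's, whose displayed expression is garbled and shows only the piece $\tfrac{16N\Lambda_0^2 m^2}{4(N-1)^2(N-2)^2}$, whereas the correct value is $b_{N,m}^2-4a_{N,m}=\tfrac{16\,m\Lambda_0^2\bigl[Nm+(N-4)(N-1)^2\bigr]}{4(N-1)^2(N-2)^2}$, which is positive for all $N\ge 5$, $m\ge 2$ as you observe.
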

\begin{proof} We have
$$
\begin{array}[t]{ll}
b_{N,m}^2-4{a_{N,m}}
\displaystyle \frac{16 N \Lambda_0^2m^2}{4(N-1)^2(N-2)^2}>0.
\end{array}
$$
The proof finishes from the previous calculation and bearing in mind that $b_{N,m}>0$ and  $a_{N,m}>0$. 
\end{proof}
\begin{remark}\label{remark1} It is crucial for the computations throughout the rest of this paper to highlight that
$$
\begin{array}[t]{ll}
\displaystyle \ep^4 \int_M   u \, P_{\geph} u \ dv_g \!&\!\displaystyle= \ep^4 \int_M  (\Delta_{g}  u)^2 dv_g  +  \ep^2 \int_M b_{N,m} | \nabla u|^2 dv_g +\int_M a_{N,m}  u^2  \ dv_g \medskip\\
&\qquad\qquad \displaystyle  + \ep^4 \int_M  u \, \varphi (u) \ dv_g +  \frac{N-4}{2}  \int_M  ( \ep^4  f_0 + \ep^{2} f_2 ) u^2 dv_g.\end{array} 
$$
It is easy to see then that for any $\delta >0$, we can pick $\ep_0 > 0$ such that for any $\ep \in (0, \ep_0 )$ and for any $u\in H^2 (M)$ we have that
$$
\begin{array}[t]{ll}
\displaystyle (1+\delta )^{-1}  \left(   \ep^4 \int_M  (\Delta_{g}  u)^2 dv_g  +  \ep^2 \int_M b_{N,m} | \nabla u|^2 dv_g
 +\int_M a_{N,m}  u^2  \ dv_g  \right) 
 \medskip\\ 
 \displaystyle 
\qquad  \leq  \ep^4  \int_M   u \, P_{\geph} u \, dv_g  \medskip\\ 
 \displaystyle 
\qquad \leq (1+\delta )   \left(   \ep^4 \int_M  (\Delta_{g}  u)^2 dv_g  +  \ep^2 \int_M b_{N,m} | \nabla u|^2 dv_g
 +\int_M a_{N,m}  u^2  \ dv_g  \right).
\end{array}
$$
\end{remark}
We  will use the previous remark in next sections to obtain estimates for $\ep^4 \int_M   u \, P_{\geph} u \ dv_g $ by using  estimates for the simpler expression $\ep^4 \int_M  (\Delta_{g}  u)^2 dv_g  +  \ep^2 \int_M b_{N,m} | \nabla u|^2 dv_g +\int_M a_{N,m}  u^2  \ dv_g  $.

\section{Paneitz functional and the variational setting}\label{section3}

In this section we establish some assumptions in order to study the problem of constant $Q$-curvature on a product manifold. 
Again, we consider  a closed $n$-dimensional Riemannian manifold  $(M,g)$ and a closed $m$-dimensional Einstein  manifold  $(X,h)$ with positive Einstein constant $\Lambda_0$, and we denote by  $P_\g$ to the Paneitz operator defined in (\ref{opPan}) and by $Q_{\g}$ to the $Q$-curvature  of $\g$ given by (\ref{Qcurv}). 

We denote by $\|\cdot\|_p$ the standard  $L^p(M)$-norm for $1\leq p\leq \infty$.
We consider $H^2(M)$ endowed with the usual norm
$$
\|u\|_{H^2(M)}^2=\|\Delta_g u\|_2^2+ \|\nabla u\|_{2}^2 + \|u\|_{2}^2,
$$
which is induced by the inner product
$$
\langle u, v\rangle=\int_{M}\Delta_g u \,\Delta_g v \,dv_g	+\int_{M}  \nabla u  \nabla v \, dv_g+\int_{M}  u\,  v \, dv_g.
$$
A maximum principle for the Paneitz operator and an affirmative answer to the problem 
of finding metrics  of constant $Q$-curvature in a closed Riemannian manifold was given by M. Gursky and
A. Malchiodi  in \cite[Theorems A, D]{Malchiodi}, and it reads as
\begin{gursky}
	Let $\left(M, \g\right)$ be a closed Riemannian manifold of dimension $n \geq 5$. Assume
\begin{itemize}
\item[(i)] $Q_{\g}$ is semi-positive,
\item [(ii)] $\scal_{\g} \geq 0$.
\end{itemize}
If $u \in C^{4}$ satisfies
$$
P_{\g} u \geq 0,
$$
then either $u>0$ or $u \equiv 0$ on $M$. There exists a conformal metric  $h=u^{\frac{4}{n-4}} \g$  
with constant positive $Q$-curvature and positive scalar curvature.
\end{gursky}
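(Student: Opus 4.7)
The theorem bundles two statements---a strong maximum principle for $P_g$ and the existence of a conformal metric with constant positive $Q$-curvature---and I would prove them in that order, since the existence argument invokes the maximum principle to secure positivity of the minimizer.

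For the maximum principle, the fourth-order nature of $P_g$ precludes a direct Hopf argument, so my plan is to exploit conformal covariance to move to a favorable gauge. First I would invoke the Yamabe theorem (applicable since $\scal_g \geq 0$) to choose $\tilde g = w^{4/(n-4)}g \in [g]$ with $\scal_{\tilde g}$ equal to a positive constant $R_0$. The identity $P_g\psi = w^{-(n+4)/(n-4)}P_{\tilde g}(w\psi)$ transfers $P_g u \geq 0$ to $P_{\tilde g}(wu) \geq 0$, with $Q_{\tilde g} \geq 0$ preserved by conformal covariance. Pair with $(wu)^-$ and integrate; using the bilinear-form identity
\[\int_M v\,P_{\tilde g}v\, dv_{\tilde g} = \int_M \Bigl( (\Delta_{\tilde g}v)^2 + \tfrac{4}{n-2}\ricc_{\tilde g}(\nabla v,\nabla v) - \tfrac{n^2-4n+8}{2(n-1)(n-2)}R_0|\nabla v|^2 + \tfrac{n-4}{2}Q_{\tilde g}v^2 \Bigr) dv_{\tilde g},\]
together with a Kato-type inequality for $v^-$, the constant-$R_0$ gauge should allow the gradient terms to be handled cleanly, yielding $v^- \equiv 0$, hence $u \geq 0$. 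Given $u \geq 0$ and $P_g u \geq 0$, I would then factor (or dominate) $P_g$ as a composition of two second-order elliptic operators with non-negative zeroth-order coefficients in the $\tilde g$-gauge, and iterate the Hopf lemma through a Green's-function argument on each factor to obtain the dichotomy $u > 0$ or $u \equiv 0$.

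For the existence part, I would minimize the Paneitz quotient $\mathcal{P}_g$ via a subcritical approximation. For each $p \in (1,(n+4)/(n-4))$, minimize $\int_M u\,P_g u\, dv_g$ on $\{\|u\|_{p+1} = 1\}$; coercivity follows from $Q_g \geq 0$ together with a control of the gradient terms after integration by parts. Weak lower semicontinuity gives $u_p \in H^2(M)$ satisfying $P_g u_p = \lambda_p u_p^p$ with $\lambda_p > 0$; replacing $u_p$ by $|u_p|$ does not raise the energy, so $u_p \geq 0$ may be assumed, and the maximum principle from the previous step upgrades this to $u_p > 0$. Letting $p \to (n+4)/(n-4)$, uniform bounds on $\lambda_p$ and the energy yield weak convergence; to exclude concentration, one needs the strict inequality $Y_4(g) < Y_4(g_0^n)$, obtained by testing $\mathcal{P}_g$ against a suitably scaled bubble centered at a point of $M$ where the local geometry is favorable, in the spirit of Aubin's argument for the Yamabe problem. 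The strong limit $u_\infty$ is smooth, positive by the maximum principle, and solves the critical Paneitz--Branson equation with positive constant; the scalar curvature of $u_\infty^{4/(n-4)}g$ is then positive by the conformal transformation law combined with $Q_{u_\infty^{4/(n-4)}g} > 0$.

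The main obstacle is precisely the fourth-order character of $P_g$: there is no general comparison principle for such operators, and the zeroth-order term $\tfrac{n-4}{2}Q_g$ has the ``wrong sign'' when pairing $P_g u$ against $u^-$. The entire maximum-principle argument therefore rests on selecting an adequate conformal gauge in which the gradient contributions acquire a definite sign and $P_g$ admits a usable second-order factorization---a step that uses the full algebraic structure of the Paneitz coefficients, not just their signs. For the existence statement, the parallel obstacle is loss of compactness at the critical Sobolev exponent $2n/(n-4)$, overcome by the standard strict-inequality argument with the sphere's Paneitz--Sobolev constant.
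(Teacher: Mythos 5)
This theorem is not proved in the paper at all: it is quoted verbatim from Gursky and Malchiodi \cite[Theorems A, D]{Malchiodi} as an input to the argument, so there is no ``paper's own proof'' against which to compare your sketch. Still, a few remarks on the sketch itself.

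Your bilinear-form identity has the wrong signs on both gradient terms. Integrating $\diver_{\tilde g}(\ricc_{\tilde g}(\nabla v,e_i)e_i)$ against $v$ produces $-\ricc_{\tilde g}(\nabla v,\nabla v)$ and integrating $-\diver_{\tilde g}(R_0\nabla v)$ against $v$ produces $+R_0|\nabla v|^2$; you have the opposite signs. More importantly, even with the corrected signs, the Ricci term does \emph{not} acquire a definite sign in a constant-positive-scalar-curvature Yamabe gauge (Ricci may have negative eigenvalues even when its trace is a positive constant), so the claim that ``the constant-$R_0$ gauge should allow the gradient terms to be handled cleanly'' is not automatic and is in fact where much of the real work in Gursky--Malchiodi's proof lies. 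Secondly, your proposed upgrade from $u\ge 0$ to the dichotomy $u>0$ or $u\equiv 0$ by ``factoring or dominating $P_g$ as a composition of two second-order elliptic operators'' fails generically: such a factorization $P_{\tilde g}=(-\Delta_{\tilde g}+a)(-\Delta_{\tilde g}+b)+\text{l.o.t.}$ would force the Ricci-weighted gradient term $\tfrac{4}{n-2}\diver_{\tilde g}(\ricc_{\tilde g}(\nabla\cdot,e_i)e_i)$ to be a scalar multiple of $\Delta_{\tilde g}$, which requires $\ricc_{\tilde g}$ to be a multiple of $\tilde g$, i.e.\ an Einstein gauge---not something the Yamabe normalization provides. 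Gursky--Malchiodi avoid any such factorization and instead run a considerably subtler argument. Finally, for the existence statement, Gursky--Malchiodi prove it by a non-local geometric flow (hence the title of \cite{Malchiodi}); the subcritical-minimization-plus-bubble-comparison route you outline is closer in spirit to Hang--Yang \cite{Yang} and Gursky--Hang--Lin \cite{Gursky}, and while it is a legitimate alternative path under suitable hypotheses, it is not the proof the cited theorem rests on.
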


Consider the normalized total $Q$-curvature functional defined on the space of Riemannian metrics $g$ on the $n$-dimensional manifold $M$, which is given by
$$
\mathcal{Q} (g) := \frac{\int_M Q_g \,dv_g}{\mathrm{Vol}(M,g)^{\frac{n-4}{n}}} .
$$
Consider the restriction of the functional to the conformal class $[g]$ and the Paneitz functional $ \mathcal{P}_g$ given by
$$
\mathcal{P}_g (u) := \mathcal{Q} (u^{\frac{4}{n-4}} g) = 
\frac{2}{n-4} \frac{\int_M u\,  P_g (u) \ dv_g }{\left( \int_M |u|^{\frac{2n}{n-4}} dv_g \right)^{\frac{n-4}{n}}  }.
$$
Then we define the following constants as in \cite{Gursky}:
\begin{equation*}
Y_4 (M,g)  = \inf_{u\in C^\infty(M)}   \mathcal{P}_g (u) 
\quad 
\text{ and }
\quad 
Y_4^+  (M,g) = \inf_{h\in [g]} \mathcal{Q} (h) = \inf_{u>0}   \mathcal{P}_g (u).
\end{equation*}
We consider $m\geq 3$ and $n\geq 2$, or $m=2$ and $n\geq 7$, so the Riemannian product $(X\times M, \mathfrak{g}_{\ep}:=g+ \ep^2 h)$ satisfies the conditions of Lemma \ref{A}. It follows from Lemma
\ref{Product$Q$-curv} that:
\begin{lemma}
There exists $\ep_0 >0$ such that if $\ep \in (0,\ep_0 )$ then $\scal_{\mathfrak{g}_{\ep}}>0$ and ${Q}_{\mathfrak{g}_{\ep}}>0$. 
\end{lemma}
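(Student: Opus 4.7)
The strategy is direct expansion in powers of $\ep$, using the explicit formulas already derived in Lemma \ref{Product$Q$-curv} for $Q_{\mathfrak{g}_{\ep}}$ and the standard formula for the scalar curvature of a Riemannian product. The positivity of the leading order terms as $\ep\to 0$, together with compactness of $M$, will give both conclusions.

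For the scalar curvature, first I would recall that for a product metric $\mathfrak{g}_{\ep}=g+\ep^2 h$ one has
\[
\scal_{\mathfrak{g}_{\ep}}=\scal_g+\scal_{\ep^2 h}=\scal_g+\ep^{-2}\scal_h=\scal_g+\ep^{-2}m\Lambda_0,
\]
where I used that scaling the metric $h$ by $\ep^2$ scales its scalar curvature by $\ep^{-2}$. Since $\scal_g$ is a continuous function on the compact manifold $M$, it is uniformly bounded, while $m\Lambda_0>0$ by hypothesis. Hence for $\ep$ sufficiently small the term $\ep^{-2}m\Lambda_0$ dominates pointwise, and we get $\scal_{\mathfrak{g}_{\ep}}>0$ everywhere.

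For the $Q$-curvature, I would use the expression derived right after Lemma \ref{Product$Q$-curv}, namely
\[
Q_{\mathfrak{g}_{\ep}}=f_0+\ep^{-2}f_2+\ep^{-4}A_{N,m},
\]
with $f_0, f_2$ the smooth functions on $M$ defined in \eqref{f0f2} and $A_{N,m}$ the constant from \eqref{ANm}. Factoring out $\ep^{-4}$ gives
\[
Q_{\mathfrak{g}_{\ep}}=\ep^{-4}\bigl(A_{N,m}+\ep^{2}f_2+\ep^{4}f_0\bigr).
\]
Under our standing hypotheses ($m\geq 3$, or $m=2$ with $n\geq 7$, which forces $N=n+m\geq 9$), Lemma \ref{A} yields $A_{N,m}>0$. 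Using compactness of $M$ to bound $\|f_0\|_\infty$ and $\|f_2\|_\infty$, choose $\ep_0>0$ so small that $\ep^{2}\|f_2\|_\infty+\ep^{4}\|f_0\|_\infty<A_{N,m}/2$ for every $\ep\in(0,\ep_0)$; the bracketed expression is then bounded below by $A_{N,m}/2>0$ pointwise on $M$, and the claim follows.

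Taking the minimum of the two thresholds on $\ep$ produced above gives a single $\ep_0$ that works for both $\scal_{\mathfrak{g}_{\ep}}>0$ and $Q_{\mathfrak{g}_{\ep}}>0$. There is no real obstacle here: the only thing to check carefully is that the numerical hypotheses on $(n,m)$ match exactly the cases in Lemma \ref{A} where $A_{N,m}>0$, since without that sign the argument breaks down (the leading term $\ep^{-4}A_{N,m}$ would instead drive $Q_{\mathfrak{g}_{\ep}}$ to $-\infty$).
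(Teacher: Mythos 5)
Your proof is correct and follows essentially the same route the paper intends: the paper simply asserts that the lemma ``follows from Lemma \ref{Product$Q$-curv}'' (together with the sign computation in Lemma \ref{A}), and you have just made the obvious $\ep$-expansion explicit — leading coefficients $m\Lambda_0>0$ for $\scal_{\mathfrak{g}_\ep}$ and $A_{N,m}>0$ for $Q_{\mathfrak{g}_\ep}$, with compactness of $M$ bounding the lower-order terms. No gaps; the case check ($m\geq 3$, or $m=2$ with $N\geq 9$) matches Lemma \ref{A} exactly.
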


In particular, for $\ep \in (0,\ep_0 )$, we are under the conditions of the Gursky-Malchiodi Theorem, and then the equality 
$Y_4 (M,\mathfrak{g}_{\ep} )=Y_4^+  (M,\mathfrak{g}_{\ep} )$ is achieved by a metric of constant $Q$-curvature.

\medskip

For $q>1$ consider the functional $J_{\ep,q}^{g,+} : H^{2}(M) \rightarrow \mathbb{R}$ defined by
\begin{equation}\label{J_ep}\!
J_{\ep,q}^{g,+} (u)\!=
\! \frac{1}{\ep^n} \! \int_M\!  \Big( \ep^4  \frac{1}{2}  (\Delta_g u )^2 + \ep^2 \frac{b_{N,m}}{2} | \nabla u |^2 
+\frac{a_{N,m} }{2}  u^2 +\ep^4 \Phi (u) +\frac{\ep^2}{2} F(x,\ep ) u^2 - \frac{1}{q+1} (u^+ )^{q+1}\Big) dv_g\!
\end{equation}
where
$$
\Phi (u)=\varphi (u)u   \quad\text{and}\quad F(x,\ep ) =  \ep^2 f_0(x)+f_2(x), 
$$
and $f_0$ and $f_2$ are defined in (\ref{f0f2}). 

\noindent
Also consider the associated Nehari manifold $\mathcal{N}^g_{\varepsilon}$, given by
\begin{equation}\label{N_ep}
	\begin{array}[t]{lll}
		\!\!\mathcal{N}^g_{\ep} :=\!\bigg \{ u\!\in\! H^2 (M) \setminus\{ 0 \}\! : \!
		\medskip \\
		\displaystyle \quad\quad\,\,  \int_M  \!\! \big(\ep^4    (\Delta u )^2 + \ep^2  {b_{N,m}} | \nabla u |^2 +a_{N,m} u^2  +\ep^4 \Phi (u) +\ep^2 F(x,\ep ) u^2\big)\,  dv_g = \int_M   (u^+ )^{q+1} dv_g  \bigg\} \!\!
	\end{array}
\end{equation}

\begin{lemma} Assume that $q>1$ and if $n>4$ that $q<\frac{n+4}{n-4}$.
The functional $J_{\ep,q}^{g,+} \in C^{1}\left(H^{2}(M), \mathbb{R}\right)$ and
\begin{equation}\label{derivative}
\begin{aligned}
			\left\langle (J_{\ep,q}^{g,+})^\prime(u), v\right\rangle&= \!
\frac{1}{\ep^n}		\left(\!	\ep^4\!\int_{M}\Delta_g u \Delta_g v\, dv_g+b_{N,m}  \ep^2 \!\int_{M}\nabla u \nabla v\, dv_g+a_{N,m} \!\int_{M}\!u\, v \,dv_g \right)\\
			&\quad\,+\frac{1}{\ep^n}	\!	\left(	\!\ep^4 \int_M \varphi (u)v \, dv_g +\ep^2\! \int_M \left(\ep^2 f_0+f_2 \right) u\,v \, dv_g  -\int_M(u^{+})^{q-1}u\,v \,d v_g\!\right),
\end{aligned}
\end{equation}
for all $u, v \in H^2(M)$. 
Moreover, if $u \in H^2(M)$, then $(J^{g,+}_{\ep,q})^{\prime}(u) \in (H^2(M))^{*}$, where $(H^{2}(M))^*$ denotes the dual space of $H^{2}(M)$.
\end{lemma}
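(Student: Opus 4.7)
The plan is to split $J_{\ep,q}^{g,+}$ into its quadratic piece and a subcritical nonlinearity, and to differentiate each part separately. Writing
\[
J_{\ep,q}^{g,+}(u) = \frac{1}{\ep^n}\bigl(Q(u)+R(u)-N(u)\bigr)
\]
with $Q$ grouping the three principal quadratic terms $\tfrac{\ep^4}{2}(\Delta_g u)^2$, $\tfrac{\ep^2 b_{N,m}}{2}|\nabla u|^2$, $\tfrac{a_{N,m}}{2}u^2$, with $R$ grouping the two lower-order quadratic perturbations $\ep^4\varphi(u)u$ and $\tfrac{\ep^2}{2}F(x,\ep)u^2$, and with $N(u)=\tfrac{1}{q+1}\int_M(u^+)^{q+1}\,dv_g$ the Nemytskii nonlinearity, the lemma reduces to showing each summand is of class $C^1$ on $H^2(M)$; polarization of the quadratic forms and direct differentiation of the nonlinearity will then produce the expression in \eqref{derivative}.

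The parts $Q$ and $R$ are each continuous symmetric quadratic forms on $H^2(M)$, hence smooth. Continuity of the three forms in $Q$ is immediate from Cauchy--Schwarz and the definition of the $H^2$ norm. For the $F$-term in $R$, compactness of $M$ and smoothness of $f_0,f_2$ give $F(\cdot,\ep)\in L^\infty(M)$, whence the corresponding bilinear form is bounded by $\|F\|_\infty\|u\|_2\|v\|_2$. For the $\varphi$-term, one integration by parts (as in the preceding lemma) rewrites $\int_M\varphi(u)v\,dv_g$ as a linear combination of $\int_M\ricc_g(\nabla u,\nabla v)\,dv_g$ and $\int_M\scal_g\,\langle\nabla u,\nabla v\rangle_g\,dv_g$, manifestly symmetric in $(u,v)$ and controlled by $C(g)\|u\|_{H^1}\|v\|_{H^1}$ via the uniform bounds on $\ricc_g$ and $\scal_g$ on the compact manifold $M$.

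For $N$, the subcritical assumption $q<\tfrac{n+4}{n-4}$ (vacuous when $n\leq 4$) gives $q+1<\tfrac{2n}{n-4}$, placing $q+1$ strictly below the critical Sobolev exponent; hence the embedding $H^2(M)\hookrightarrow L^{q+1}(M)$ is continuous and compact. Since $t\mapsto(t^+)^{q+1}/(q+1)$ is $C^1$ on $\re$ with derivative $(t^+)^{q-1}t$, a mean-value argument combined with dominated convergence (majorising by $|u|^q+|u+tv|^q\in L^{(q+1)/q}(M)$) shows that $N$ is Gateaux differentiable with
\[
\langle N'(u),v\rangle = \int_M (u^+)^{q-1}u\,v\,dv_g,
\]
and H\"older bounds it by $\|(u^+)^q\|_{(q+1)/q}\|v\|_{q+1}\leq C\|u\|_{H^2}^{q}\|v\|_{H^2}$.

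The main obstacle, and the only step that really uses the subcritical hypothesis, is the continuity of $u\mapsto N'(u)$ as a map $H^2(M)\to(H^2(M))^*$. I will argue that if $u_k\to u$ in $H^2(M)$, then $u_k\to u$ in $L^{q+1}(M)$ by the compact embedding, and the Nemytskii operator associated to the continuous function $s\mapsto(s^+)^{q-1}s=(s^+)^q$ is continuous from $L^{q+1}(M)$ into $L^{(q+1)/q}(M)$ (a classical fact, since the growth exponent $q$ exactly matches the ratio of the two Lebesgue exponents). Hence $(u_k^+)^{q-1}u_k\to(u^+)^{q-1}u$ in $L^{(q+1)/q}(M)$, and H\"older together with $H^2(M)\hookrightarrow L^{q+1}(M)$ gives
\[
\sup_{\|v\|_{H^2}\leq 1}\bigl|\langle N'(u_k)-N'(u),v\rangle\bigr|\leq C\,\|(u_k^+)^{q-1}u_k-(u^+)^{q-1}u\|_{(q+1)/q}\longrightarrow 0.
\]
Combined with the continuous dependence on $u$ of the derivatives of $Q$ and $R$, this yields $(J_{\ep,q}^{g,+})'\in C(H^2(M);(H^2(M))^*)$, so $J_{\ep,q}^{g,+}\in C^1(H^2(M),\re)$ with derivative as in \eqref{derivative}. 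The explicit formula also exhibits $(J_{\ep,q}^{g,+})'(u)$ as a bounded linear functional of norm at most $C(\|u\|_{H^2}+\|u\|_{H^2}^q)$, so it lies in $(H^2(M))^*$ as claimed.
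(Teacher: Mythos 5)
Your proposal is correct and follows the same basic strategy as the paper: split $J_{\ep,q}^{g,+}$ into a quadratic part and a subcritical Nemytskii nonlinearity, and show each piece is $C^1$. Your decomposition $Q+R-N$ is just a finer version of the paper's $I^1_\ep - I^2_\ep$ (with $I^1_\ep = Q+R$ and $I^2_\ep = N$), and your treatment of $Q$ and $R$ via polarization, boundedness of $F(\cdot,\ep)$, and the symmetric integration-by-parts form of $\int_M\varphi(u)v\,dv_g$ matches what the paper takes as "easy to see."

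Where you go beyond the paper is in the continuity of $N'$. The paper passes to an a.e.\ convergent subsequence, writes a H\"older bound on $\langle (I^2_\ep)'(u_j),v\rangle$ (whose displayed exponents are in fact garbled — the correct bound is $\|u_j\|_{L^{q+1}}^{q}\|v\|_{L^{q+1}}$), and then simply asserts that $\|(I^2_\ep)'(u_j)-(I^2_\ep)'(u)\|\to 0$. Your argument fills that gap: strong convergence $u_k\to u$ in $H^2$ passes (already by the continuous, not just compact, embedding) to $L^{q+1}$, and then the continuity of the Nemytskii operator $s\mapsto(s^+)^q$ from $L^{q+1}$ to $L^{(q+1)/q}$ gives exactly the needed $\|(u_k^+)^{q-1}u_k-(u^+)^{q-1}u\|_{(q+1)/q}\to 0$, which via H\"older and $H^2\hookrightarrow L^{q+1}$ controls the operator norm of $N'(u_k)-N'(u)$. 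This is the right level of rigor and is the standard way to establish $C^1$ regularity for subcritical Nemytskii functionals; the paper's version only sketches it.
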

\begin{proof}
First, we write $J^{g,+}_{\ep,q}=\dfrac{1}{\ep^n} \left(I^1_\ep-I^2_\ep\right)$, where
$$
I_\ep^2(u)=	\frac{1}{q+1} \int_M (u^+)^{q+1}\,dv_g,
$$
and $I^1_\ep=J^{g,+}_{\ep,q}+I^2_\ep$. 
It is easy to see that for all $u, v \in H^{2}(M)$, \eqref{derivative} holds and  that $I^1_\ep \in C^{1}\left(H^{2}(M), \mathbb{R}\right)$.
	
Next, we   show $I^2_\ep \in C^{1}(H^{2}(M),\re)$.
Let $\left\{u_{j}\right\} \subset H^{2}(M)$ a sequence such that $u_{j} \rightarrow u$ for some $u \in H^{2}(M)$, strongly in $H^{2}(M)$ as $j \rightarrow \infty$.
Since $1<q + 1<2_n^{\sharp}$, there exists a subsequence of $\left\{u_{j}\right\}$ still denoted by $\left\{u_{j}\right\}$ such that $u_{j} \to u$ a.e. in $M$. 
By H\"older's inequality, we get 
\begin{eqnarray*}
\left|\left\langle  {I^2_\ep}^\prime (u_j),v\right\rangle \right|&\leq&
		\displaystyle\int_{M}\left|(u_{j}^+)^{q-1}u_jv\right| \ dv_g\\
		&\leq& \displaystyle\int_{M}|u_{j}|^{q}|v| \ dv_g\\
		&\leq&  \|u_j\|_{L^q(M)}^{1-1/q} \|v\|_{L^q(M)}^q\\
		&\leq& C \left\|u_j\right\|_{H^2(M)}\left\|v\right\|_{H^2(M)}
\end{eqnarray*}
since $H^2(M)\hookrightarrow L^q(M)$.
Additionally, the fact that $u_{j} \to u$ strongly in $H^{2}(M)$ implies
\begin{equation*}\label{cont2}
\lim _{n \rightarrow \infty} \|u_j-u\|_{H^2(M)} =0.
\end{equation*}
Then we have
$$
\left\| {I^2_\ep}^\prime(u_j) - {I^2_\ep}^\prime (u)\right\|=\sup _{v \in H^{2}(M),\|v\|_{H^2(M)} \leq 1}\left|\left\langle {I_\ep^2}^{\prime}\left(u_{j}\right)-{I_\ep^2}^{\prime}(u), v\right\rangle\right| \rightarrow 0
\qquad
\text{ as } j \rightarrow \infty.
$$
\end{proof}
From the previous lemma, it follows that positive solutions of \eqref{mainEq} belong to $\mathcal{N}^g_{\varepsilon}$ and are critical points for $J^{g,+}_{\varepsilon,q}$. 
Moreover, we get the following result.
\begin{lemma}\label{CriticalPoints} For $\ep$ small enough, the critical points of $J^{g,+}_{\varepsilon,q}$ restricted to $\mathcal{N}^g_{\ep}$ are the positive solutions of
Equation \eqref{mainEq}.
\end{lemma}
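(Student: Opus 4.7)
I would approach the lemma via a standard Lagrange multiplier argument followed by the Gursky--Malchiodi maximum principle. Define $\Psi:H^{2}(M)\setminus\{0\}\to\re$ by
\[
\Psi(u):=\ep^{n}\,\langle(J^{g,+}_{\ep,q})'(u),u\rangle=A_{\ep}(u,u)-B(u),
\]
where $A_{\ep}(u,u)$ stands for the full quadratic form on the left-hand side of \eqref{N_ep} and $B(u):=\int_{M}(u^{+})^{q+1}\,dv_{g}$, so that $\mathcal{N}^{g}_{\ep}=\Psi^{-1}(0)$. If $u\in\mathcal{N}^{g}_{\ep}$ is a critical point of the restricted functional, there exists $\mu\in\re$ with $(J^{g,+}_{\ep,q})'(u)=\mu\,\Psi'(u)$. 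Since $A_{\ep}$ is $2$-homogeneous in $u$ and $B$ is $(q+1)$-homogeneous along positive rays, Euler's relation together with the Nehari equality $A_{\ep}(u,u)=B(u)$ yields
\[
\langle\Psi'(u),u\rangle=2A_{\ep}(u,u)-(q+1)B(u)=(1-q)\,B(u),
\]
and pairing $(J^{g,+}_{\ep,q})'(u)=\mu\,\Psi'(u)$ with $u$ together with $u\in\mathcal{N}^{g}_{\ep}$ forces $\mu(1-q)B(u)=0$.

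To conclude $\mu=0$ one needs $B(u)>0$. The key is that for $\ep$ small the quadratic form $A_{\ep}$ is coercive on $H^{2}(M)$: the coefficient $a_{N,m}$ of $\int u^{2}\,dv_{g}$ is positive by Lemma~\ref{A}, the integral $\ep^{4}\int\Phi(u)\,dv_{g}=\ep^{4}\int\varphi(u)u\,dv_{g}$ is bounded in absolute value by $C\ep^{4}\int|\nabla u|^{2}\,dv_{g}$ via the estimate already proved for $\varphi$ and so is absorbed by $\ep^{2}b_{N,m}\int|\nabla u|^{2}\,dv_{g}$, and $\ep^{2}\int F(x,\ep)u^{2}\,dv_{g}$ is an $O(\ep^{2})$ perturbation of $a_{N,m}\int u^{2}\,dv_{g}$ since $f_{0},f_{2}$ are bounded on $M$. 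Therefore $A_{\ep}(u,u)>0$ whenever $u\neq 0$, the Nehari identity forces $B(u)=A_{\ep}(u,u)>0$, and since $q>1$ we obtain $\mu=0$. Hence $u$ is an unconstrained critical point of $J^{g,+}_{\ep,q}$ and satisfies the Euler--Lagrange equation
\[
\ep^{4}\Delta_{g}^{2}u-\ep^{2}b_{N,m}\Delta_{g}u+a_{N,m}u+\ep^{4}\varphi(u)+\tfrac{N-4}{2}(\ep^{4}f_{0}+\ep^{2}f_{2})u=(u^{+})^{q}.
\]

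The final step is to upgrade $u$ to a strictly positive function. Dividing by $\ep^{4}$ and comparing with \eqref{Pgep}, the above rewrites as $P_{\g_{\ep}}u=\ep^{-4}(u^{+})^{q}\geq 0$, where I view $u$ as a function on $M\times X$ constant along the $X$ factor; elliptic bootstrapping promotes $u\in H^{2}(M)$ to $C^{4}$. By the lemma established above, for $\ep$ small one has $\scal_{\g_{\ep}}>0$ and $Q_{\g_{\ep}}>0$, so the Gursky--Malchiodi maximum principle applies and gives $u>0$ or $u\equiv 0$; the second option is excluded because $u\in\mathcal{N}^{g}_{\ep}$. With $u>0$ we have $(u^{+})^{q}=u^{q}=u^{(N+4)/(N-4)}$, and the equation coincides with \eqref{mainEq}. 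The main obstacle in this plan is the uniform coercivity of $A_{\ep}$ for small $\ep$: the contributions of $\Phi$ and of $F(x,\ep)u^{2}$ are not sign-definite and enter with different powers of $\ep$ than the principal positive terms, so one must verify explicitly that they can all be absorbed; once that is done, the remainder of the argument is essentially formal.
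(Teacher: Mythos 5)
Your proposal is correct and follows essentially the same route as the paper: show that a constrained critical point on the Nehari manifold is an unconstrained critical point solving $P_{\g_\ep} u = \ep^{-4}(u^+)^q \geq 0$, bootstrap regularity to $C^4$, and invoke the Gursky--Malchiodi maximum principle to conclude $u>0$ (hence $u$ solves \eqref{mainEq}). The Lagrange-multiplier and coercivity details you supply spell out the first step, which the paper compresses into a single sentence, and your observation that nonnegativity is obtained only \emph{after} applying the maximum principle — rather than directly from the variational structure — is precisely the logic the paper relies on.
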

\begin{proof} 
Let $u \in \mathcal{N}^g_{\varepsilon}$ be a critical point of $J^{g,+}_{\varepsilon,q}$ restricted to $\mathcal{N}^g_{\ep}$. 
Then $u$ must be nonnegative and a weak solution of \eqref{mainEq}.
Since the embedding of $H^2(M)$ in $L^q(M)$ is compact, we know from the classical bootstrap argument that $u\in L^s(M)$ for all $s\geq 1$.  
By regularity theory (see \cite{Robert}), we get $u \in C^4(M)$.
	
Finally, by the theorem of Gursky and Malchiodi \cite[Theorem A]{Malchiodi}  that we mentioned above we know that for $\ep$ small enough, if $u \in C^{4}(M\times X)$ satisfies
$$
P_{\mathfrak{g}_\ep} u \geq 0,
$$
then either $u>0$ or $u \equiv 0$ on $M\times X$. 
In consequence, $u$ is positive and smooth.
\end{proof}
By definition, a sequence $\left(u_{i}\right)$ of functions in $\mathcal{N}^g_{\varepsilon}$ is said to be a Palais-Smale sequence for the functional $J^{g,+}_{\varepsilon,q}$ in (\ref{J_ep}) if:
\begin{itemize}
	\item[(PS1)] $J^{g,+}_{\varepsilon,q}\left(u_{i}\right)=O(1)$, when $i\to \infty$, and
	\item[(PS2)] $(J^{g,+}_{\varepsilon,q})^{\prime}\left(u_{i}\right) \rightarrow 0$ in $(H^{2}(M))^*$ as $i \to +\infty$, where $(H^{2}(M))^*$ denotes the dual space of $H^{2}(M)$.
\end{itemize}
We say that $J^{g,+}_{\varepsilon,q}$ satisfies Palais-Smale condition in $\mathcal{N}^g_{\varepsilon}$, if for any Palais-Smale sequence $\left\{u_{i}\right\} \subset \mathcal{N}^g_{\varepsilon}$, there exists a convergent subsequence of $\left\{u_{i}\right\}$.
\begin{theorem}\label{PS} Assume that $q>1$ and if $n>4$ that $q<\frac{n+4}{n-4}$. Then, $J^{g,+}_{\varepsilon,q}$ restricted to $\mathcal{N}^g_{\varepsilon}$ is bounded below and it satisfies the Palais-Smale condition for $\ep$ small enough.
\end{theorem}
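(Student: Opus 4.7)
The plan has two components, corresponding to the two claims: a coercivity-type lower bound on $\mathcal{N}^g_\varepsilon$ and compactness of Palais-Smale sequences. The common tool will be that, for $\varepsilon$ small, the lower-order perturbations $\varepsilon^4 \Phi(u)$ and $\varepsilon^2 F(x,\varepsilon) u^2$ are dominated by the positive quadratic form
$$Q_\varepsilon(u) \,:=\, \int_M \bigl(\varepsilon^4 (\Delta_g u)^2 + \varepsilon^2 b_{N,m}|\nabla u|^2 + a_{N,m} u^2\bigr)\,dv_g,$$
as in Remark \ref{remark1}, together with the bound $|\int_M \varphi(u) u\, dv_g| \leq C\int_M |\nabla u|^2\, dv_g$ from the previous lemma. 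Since $q+1 < \tfrac{2n}{n-4}$ (or $q+1$ arbitrary if $n\leq 4$), the embedding $H^2(M)\hookrightarrow L^{q+1}(M)$ is compact; this is the key ingredient for Palais-Smale.

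\textbf{Boundedness below.} On $\mathcal{N}^g_\varepsilon$ the defining constraint lets me replace $\tfrac{1}{q+1}\int_M (u^+)^{q+1}$ by $\tfrac{1}{q+1}$ times the full quadratic expression, yielding
$$J^{g,+}_{\varepsilon,q}(u) \,=\, \frac{1}{\varepsilon^n}\Bigl[\bigl(\tfrac{1}{2}-\tfrac{1}{q+1}\bigr)\bigl(Q_\varepsilon(u) + \varepsilon^2\!\int_M F u^2\bigr) + \tfrac{q}{q+1}\varepsilon^4\!\int_M \varphi(u)u\,dv_g\Bigr].$$
Using $|\int \varepsilon^4 \varphi(u) u|\leq C\varepsilon^2\,(\varepsilon^2 b_{N,m}\int |\nabla u|^2/b_{N,m})\leq C\varepsilon^2 Q_\varepsilon(u)$ and $|\varepsilon^2\int F u^2|\leq C\varepsilon^2 Q_\varepsilon(u)/a_{N,m}$, one obtains, for $\varepsilon$ small enough,
$$J^{g,+}_{\varepsilon,q}(u) \,\geq\, \tfrac{1}{2}\bigl(\tfrac{1}{2}-\tfrac{1}{q+1}\bigr)\varepsilon^{-n} Q_\varepsilon(u) \,\geq\, 0.$$
In particular, the functional is coercive with respect to the $H^2$-equivalent norm $Q_\varepsilon^{1/2}$ (for each fixed $\varepsilon$).

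\textbf{Palais-Smale condition.} Let $(u_i)\subset\mathcal{N}^g_\varepsilon$ be a Palais-Smale sequence. Coercivity immediately yields $\|u_i\|_{H^2(M)}\leq C_\varepsilon$, so up to a subsequence $u_i \rightharpoonup u$ in $H^2(M)$, and by the compact embeddings, $u_i\to u$ strongly in $L^{q+1}(M)$, in $H^1(M)$, and in $L^2(M)$. I will first deduce an unconstrained Palais-Smale property: by Lagrange multipliers there exist $\lambda_i\in\re$ with $(J^{g,+}_{\varepsilon,q})'(u_i) - \lambda_i G'(u_i)\to 0$ in $(H^2(M))^*$, where $G$ is the Nehari constraint. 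Pairing with $u_i$ and using $\langle (J^{g,+}_{\varepsilon,q})'(u_i),u_i\rangle=0$ on $\mathcal{N}^g_\varepsilon$ together with the standard computation $\langle G'(u_i),u_i\rangle = (1-q)\int_M(u_i^+)^{q+1}\,dv_g$, one gets $\lambda_i(q-1)\int_M (u_i^+)^{q+1}\,dv_g \to 0$. Since a routine Sobolev/Nehari argument gives $\int_M(u_i^+)^{q+1}\geq c_\varepsilon>0$ (because the quadratic form $Q_\varepsilon$ controls the $(q+1)$-norm from above by the embedding, forcing a lower bound $\|u_i\|_{H^2}\geq c_\varepsilon$ on the manifold), we conclude $\lambda_i\to 0$ and therefore $(J^{g,+}_{\varepsilon,q})'(u_i)\to 0$ in $(H^2(M))^*$. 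Evaluating on $u_i-u$ and invoking the strong convergences above, the term $\int_M (u_i^+)^{q-1}u_i(u_i-u)\,dv_g$ and the lower-order terms $\int_M \varepsilon^4 \varphi(u_i)(u_i-u)\,dv_g$, $\int_M\varepsilon^2 F u_i(u_i-u)\,dv_g$ all tend to zero. What remains is the leading quadratic pairing, which after subtracting the identity $a(u,u_i-u)\to 0$ gives $a(u_i-u,u_i-u)\to 0$; since $a$ is equivalent to the $H^2(M)$ norm, $u_i\to u$ strongly in $H^2(M)$, and $u\in\mathcal{N}^g_\varepsilon$ by closedness of the constraint combined with the lower bound on norms.

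The main technical hurdle is the passage from the constrained Palais-Smale condition to the unconstrained one, i.e.\ showing that $\mathcal{N}^g_\varepsilon$ is a genuine $C^1$-manifold with $G'(u)$ uniformly bounded away from $0$ on the sequence, so that $\lambda_i\to 0$; this is where the subcriticality $q<\frac{n+4}{n-4}$ (giving a uniform positive lower bound on $\|u\|_{H^2}$ for $u\in\mathcal{N}^g_\varepsilon$) plays its decisive role.
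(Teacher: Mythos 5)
Your argument is correct and, apart from one structural difference, follows the same route as the paper: on $\mathcal{N}^g_\varepsilon$ the Nehari constraint gives the nonnegativity of the functional, the coefficients $a_{N,m},b_{N,m}>0$ make the quadratic form $Q_\varepsilon$ equivalent to the $H^2$-norm for fixed $\varepsilon$, and the compact embedding $H^2(M)\hookrightarrow L^{q+1}(M)$ together with weak $H^2$ / strong $H^1$ convergence upgrades the weak limit to a strong one by comparing $Q_\varepsilon(u_i,u_i-u)$ with $Q_\varepsilon(u,u_i-u)$. The structural difference concerns the Palais–Smale step. The paper's stated definition of a Palais–Smale sequence for $J^{g,+}_{\varepsilon,q}$ restricted to $\mathcal{N}^g_\varepsilon$ already requires $(J^{g,+}_{\varepsilon,q})'(u_i)\to 0$ in the full dual $(H^2(M))^*$, so the authors never need to touch the constraint; their proof goes directly from that hypothesis. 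You instead start from the intrinsic constrained notion (a Lagrange multiplier $\lambda_i$ with $(J^{g,+}_{\varepsilon,q})'(u_i)-\lambda_i G'(u_i)\to 0$) and recover the paper's hypothesis by pairing with $u_i$, computing $\langle G'(u_i),u_i\rangle=(1-q)\int_M(u_i^+)^{q+1}$, and using the uniform lower bound on $\int_M(u_i^+)^{q+1}$ to force $\lambda_i\to 0$. This extra step is harmless and arguably closer to the standard Nehari-manifold formulation; it also makes explicit where the subcriticality $q<\frac{n+4}{n-4}$ enters (the uniform lower bound on $\|u\|_{H^2}$ over $\mathcal{N}^g_\varepsilon$). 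Two small points worth flagging: your boundedness-below computation requires a \emph{two-sided} bound $|\int_M \varphi(u)u\,dv_g|\leq C\int_M|\nabla u|^2\,dv_g$, whereas the paper's lemma only states the upper bound — the lower bound does hold, by the same integration-by-parts identity, but you should say so; and you should note briefly that $G'(u_i)$ is bounded in $(H^2(M))^*$ (which follows from the $H^2$-bound and subcriticality) so that $\lambda_i\to 0$ actually implies $\lambda_i G'(u_i)\to 0$.
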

\begin{proof}  
Note that if $u \in \mathcal{N}^g_{\varepsilon}$ then $J^{g,+}_{\varepsilon,q} (u) \geq 0$.
Let $\left\{u_{i}\right\}$  be a Palais-Smale sequence  in $\mathcal{N}^g_{\varepsilon}$ for $J^{g,+}_{\varepsilon,q}$. 
Then, for $\ep$ small enough, we have
$$
O(1) + o(1)\|u_i\|_{H^2(M)}  \begin{array}[t]{ll}=(q+1)J^{g,+}_{\varepsilon,q} (u_i) - \langle (J^{g,+}_{\varepsilon,q})^{\prime}(u_i), u_i\rangle 
\medskip\\
\displaystyle 
	=\frac{1}{\ep^{n}}\bigg( \frac{q-1}{2}  \int_M   \big(\ep^4    (\Delta u )^2 + \ep^2 b_{N,m} | \nabla u |^2 +a_{N,m}   u^2 \big)\,dv_g \medskip\\
	\displaystyle \qquad\qquad + \ep^4q\int_M \varphi(u)\,u\, dv_g + \frac{q-1}{2}\int_M \ep^2F(\ep, x) u^2 dv_g \bigg)
\medskip\\
\displaystyle 
	\geq C \|u_i\|^2_{H^2(M)},
\end{array}
$$
where $C=C(N,m,q,\ep)$, due to Remark \ref{remark1}.
It follows that $\|u_i\|_{H^2(M)}$ is bounded in $H^2(M)$ and in consequence, there is a subsequence, still denoted $(u_i)$ which converges weakly to a function $u\in H^2(M)$ and strongly in $H^1(M)$. 
Set $v_i=u-u_i\in H^2(M)$. 
We have
\begin{equation}\label{diference}
\begin{aligned} 
			&\ep^4\int_{M}|\Delta_g u_i|^2 \,dv_g+b_{N,m} \  \ep^2 \int_{M}|\nabla u_i|^2 \,dv_g+a_{N,m} \int_{M}u_i^2 \,dv_g\medskip  \\
			&\quad=\ep^4\int_{M} (\Delta_g u )^2 dv_g+b_{N,m} \  \ep^2 \int_{M}|\nabla u|^2 dv_g+a_{N,m} \int_{M}u^2 \,dv_g  \medskip \\
			&\quad\qquad-2\left(\varepsilon^4\int_M \Delta_g u\Delta_g v_i \,dv_g+b_{N,m} \  \ep^2 \int_{M}\nabla u\nabla v_i \,dv_g+a_{N,m} \int_{M}u v_i \,d v_g\right)\medskip\\ 
			&\quad\qquad+\ep^4\int_{M}(\Delta_g v_i)^2\,dv_g+b_{N,m} \  \ep^2 \int_{M}|\nabla v_i|^2 \,dv_g+a_{N,m} \int_{M}v_i^2\,dv_g  \\
\end{aligned}
\end{equation}
Because of the weakly convergence in $H^2(M)$, we know that
$$
o(1)=\int_{M}\Delta_g w \Delta_g v_i \,dv_g+ \int_{M}\nabla w \nabla v_i \,dv_g+\int_{M}w v_i \,dv_g, 
$$
for all $w\in H^2(M)$, and the strong convergence in $H^1(M)$ implies
$$
 \int_M(|\nabla v_i|^2+v_i^2)\,dv_g=o(1).
$$
Then, from (\ref{diference}). we get
\begin{equation*}
	\begin{aligned} 
	&\ep^4\int_{M}(\Delta_g u_i)^2  \,dv_g+b_{N,m} \  \ep^2 \int_{M}|\nabla u_i |^2  \,dv_g+a_{N,m} \int_{M}u_i^2 \,dv_g  \\
	&=\ep^4\int_{M}  (\Delta_g u)^2 \,dv_g+b_{N,m} \  \ep^2 \int_{M}|\nabla u|^2  \,dv_g+a_{N,m} \int_{M}u^2\,dv_g  +\ep^4\int_{M}(\Delta_g v_i)^2\,dv_g + o(1). \\
	\end{aligned}
\end{equation*}
Moreover, since $2 \leq q+1 < 2_n^{\sharp}$ and the embedding $H^2(M) \hookrightarrow L^q(M)$ is compact, we can assume that $\lim_{i\to+\infty} u_i = u$ in $L^q(M)$. 
Consequently, we get that
$$
\lim_{i \rightarrow +\infty}\int_M(u_i^{+})^{q+1}d v_g=\int_M(u^{+})^{q+1} d v_g.
$$
Plugging this equalities in $\langle (J^{g,+}_{\varepsilon,q})^{\prime} (u_i), v_i\rangle$, one gets that
\begin{equation*}
\int_{M}(\Delta_g v_i)^2dv_g=o(1)
\end{equation*}
which means, that the convergence in $H^2(M)$ is strong.
\end{proof}

We are now in the position to apply the well known result from Lusternik-Schnirelmann theory, which says that a $C^1$-functional  $J$ on a Banach manifold $N$ which is bounded form below and satisfies the Palais-Smale condition has at least $\mathrm{Cat} (\{ x \in N : J(x) \leq a \} )$ critical points, for any $a$
(see for instance \cite[Chapter 9]{AM} or \cite{BCP}). 

Let $\Sigma_{\ep , d} = \{ u \in \mathcal{N}^g_{\varepsilon} : J^{g,+}_{\varepsilon,q} (u) \leq d \}$. 
Then Lemma \ref{CriticalPoints} and Theorem \ref{PS} imply:
\begin{corollary}\label{cor36} There exists $\ep_0 >0$ such that for any $\ep \in (0,\ep_0 )$ and
any $d>0$ Equation \eqref{mainEq} has at least $\mathrm{Cat} (\Sigma_{\ep ,d} )$ positive solutions.
\end{corollary}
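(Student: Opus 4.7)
The plan is to apply the abstract Lusternik-Schnirelmann theorem (see \cite[Ch.~9]{AM} or \cite{BCP}): a $C^1$ functional on a complete $C^1$ Banach submanifold that is bounded below and satisfies Palais-Smale has, in every sublevel set $\{J \leq d\}$, at least $\mathrm{Cat}(\{J \leq d\})$ critical points. Theorem~\ref{PS} gives that $J^{g,+}_{\ep,q}|_{\mathcal{N}^g_\ep}$ is bounded below and satisfies Palais-Smale for $\ep \in (0,\ep_0)$, while Lemma~\ref{CriticalPoints} identifies free critical points in $\mathcal{N}^g_\ep$ with positive smooth solutions of \eqref{mainEq}. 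The only missing ingredient is the structural one: verifying that $\mathcal{N}^g_\ep$ is a genuine $C^1$ Banach submanifold of $H^2(M)\setminus\{0\}$ and that every constrained critical point is a free critical point.

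Write the Nehari constraint as $G_\ep(u) := \ep^n\,\langle (J^{g,+}_{\ep,q})'(u), u\rangle$, so that $\mathcal{N}^g_\ep = \{u\neq 0 : G_\ep(u)=0\}$. A direct computation, using that the quadratic part of $G_\ep$ is $2$-homogeneous while the superlinear part is $(q+1)$-homogeneous in $u$, yields for $u\in \mathcal{N}^g_\ep$
\[
\langle G_\ep'(u), u\rangle \,=\, (1-q)\int_M (u^+)^{q+1}\,dv_g.
\]
By the coercivity of the quadratic part recorded in Remark~\ref{remark1}, for $\ep$ small the Nehari identity forces $\int_M (u^+)^{q+1}\,dv_g \geq c > 0$ on $\mathcal{N}^g_\ep$; since $q>1$ this gives $\langle G_\ep'(u), u\rangle < 0$. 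Hence $0$ is a regular value of $G_\ep$, so $\mathcal{N}^g_\ep$ is a $C^1$ Banach submanifold; moreover it is closed and bounded away from $0$ in $H^2(M)$ (again by the coercivity of Remark~\ref{remark1} combined with the Sobolev embedding $H^2(M)\hookrightarrow L^{q+1}(M)$), hence complete. For a constrained critical point $u$, the Lagrange multiplier rule produces $\mu\in\mathbb{R}$ with $(J^{g,+}_{\ep,q})'(u) = \mu\,G_\ep'(u)$; testing on $u$ and invoking $\langle (J^{g,+}_{\ep,q})'(u), u\rangle = 0$ on the Nehari manifold forces $\mu\langle G_\ep'(u), u\rangle = 0$, so $\mu = 0$ and $u$ is a free critical point.

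Assembling the pieces: Lusternik-Schnirelmann produces at least $\mathrm{Cat}(\Sigma_{\ep,d})$ critical points of $J^{g,+}_{\ep,q}|_{\mathcal{N}^g_\ep}$ in $\Sigma_{\ep,d}$; by the Lagrange multiplier computation each is a free critical point of $J^{g,+}_{\ep,q}$, and by Lemma~\ref{CriticalPoints} each is a positive solution of \eqref{mainEq}. The only technical point deserving attention is the regular-value verification for the Nehari constraint, which ultimately reduces to the coercivity of the quadratic part established in Remark~\ref{remark1}, valid for $\ep \in (0,\ep_0)$.
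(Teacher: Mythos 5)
Your proposal is correct and takes essentially the same route as the paper: it invokes the abstract Lusternik-Schnirelmann theorem, using Theorem~\ref{PS} for boundedness from below and the Palais-Smale condition and Lemma~\ref{CriticalPoints} to pass from constrained critical points to positive solutions. The additional work you carry out --- the homogeneity computation $\langle G_\ep'(u),u\rangle=(1-q)\int_M(u^+)^{q+1}\,dv_g<0$, which certifies that $0$ is a regular value of the Nehari constraint, that the Nehari manifold is a complete $C^1$ submanifold, and that the Lagrange multiplier vanishes --- merely supplies standard details the paper leaves implicit.
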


\section{The limit equation and functionals on $\re^n$ }\label{section4}

In this section we introduce the limit problem. 
Recall that in the equation $2<q+1<2_n^\sharp$, where $2_n^\sharp = \infty$ if $n\leq 4$ and  $2_n^\sharp=  \frac{2n}{n-4}$  if $n>4$.
For $\alpha, \beta >0$, consider $J_{\alpha , \beta } : H^2 (\re^n ) \rightarrow \re$,  given by
$$
J_{\alpha , \beta} (u) := \frac{1}{2} \int_{\re^n}  \left( (\Delta u )^2  + \beta  | \nabla  u |^2  +\alpha u^2 \right)dx.
$$
Let $E_{\alpha , \beta , q} : H^2 (\re^n ) \rightarrow \re$, defined by
$$
E_{\alpha , \beta , q} (u):= \frac{1}{2}  \int_{\re^n} \left( (\Delta u )^2  +
 \beta  | \nabla  u |^2  +\alpha u^2\right) dx -  \frac{1}{q+1} \int_{\re^n} |u|^{q+1} dx .
$$
Also let $Y_{\alpha , \beta , q} : H^2 (\re^n ) \setminus \{ 0 \} \rightarrow \re$, 
$$
Y_{\alpha , \beta , q} (u) :=   \frac{ J_{\alpha , \beta} (u) }{ \| u\|_{q+1}^2 }. 
$$
Consider 
$$
\mathcal{N}(\alpha , \beta , q):= \bigg\{ u \in H^2 (\re^n ) \setminus \{ 0 \}  :2 J_{\alpha , \beta} (u) =   \int_{\re^n}     | u |^{q+1} dx  \bigg\}.
$$
Let
$$
m_{\alpha , \beta}: = \inf_{u \in \mathcal{N}(\alpha , \beta , q)} E_{\alpha , \beta , q} (u)  =
 \inf_{u \in \mathcal{N}(\alpha , \beta , q)}  \frac{q-1}{2(q+1)} \| u \|_{q+1}^{q+1} \geq 0  .  
$$
Note that for any $u \in H^2 (\re^n ) \setminus \{ 0 \}$ there exists exactly one $\lambda
=\lambda (u) >0$ such that $\lambda (u)  u \in  \mathcal{N}(\alpha , \beta , q)$ and that for any constant $c \neq 0$, $Y_{\alpha , \beta , q} ( c u)   =  Y_{\alpha , \beta , q} (u) $. 
It follows that 
\begin{equation}\label{car}
\begin{aligned}
	m_{\alpha , \beta} &= \inf_{u \in \mathcal{N}(\alpha , \beta , q)}  \frac{q-1}{2(q+1)} \| u \|_{q+1}^{q+1}  \\
	&= \frac{q-1}{2(q+1)} \left(    \inf_{u \in \mathcal{N}(\alpha , \beta , q)}   \| u \|_{q+1}^{q-1} \right)^{\frac{q+1}{q-1}} \\
	&= \frac{q-1}{2(q+1)} \left(    \inf_{u \in \mathcal{N}(\alpha , \beta , q)}  2 Y_{\alpha , \beta , q} (u)  \right)^{\frac{q+1}{q-1}} \\
	& = \frac{q-1}{q+1}  \ 2^{\frac{2}{q-1}}
	\left( \inf_{u  \in H^2 (\re^n ) \setminus \{ 0 \} }  Y_{\alpha , \beta , q} (u) \right)^{\frac{q+1}{q-1}}\\
&= \frac{q-1}{q+1} \ 2^{\frac{2}{q-1}} 
\left( \inf_{u  \in H^2 (\re^n ), \| u \|_{q+1} =1 } J_{\alpha , \beta} (u)  \right)^{\frac{q+1}{q-1}}.\\
\end{aligned}
\end{equation}
Let us recall an important result from \cite[Theorem 1.1]{BN} about this infimum.
\begin{bonheu}
 If $\beta \geq 2 \sqrt{\alpha}$ then $m_{\alpha , \beta} $ is realized 
by a positive, radial (around some point), radially decreasing function.
\end{bonheu}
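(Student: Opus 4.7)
The plan is to exploit the factorization of the quartic symbol afforded by the hypothesis $\beta\ge 2\sqrt{\alpha}$. Write
\[
|\xi|^4+\beta|\xi|^2+\alpha=(|\xi|^2+a)(|\xi|^2+b),\qquad a+b=\beta,\ ab=\alpha,\ a,b>0,
\]
so that $L:=\Delta^2-\beta\Delta+\alpha=(-\Delta+a)(-\Delta+b)$. By (\ref{car}), realizing $m_{\alpha,\beta}$ is equivalent to realizing
\[
I:=\inf\bigl\{J_{\alpha,\beta}(u):u\in H^2(\re^n),\ \|u\|_{q+1}=1\bigr\}.
\]
Let $S:=L^{1/2}$ be the positive self-adjoint Fourier multiplier with symbol $\bigl((|\xi|^2+a)(|\xi|^2+b)\bigr)^{1/2}$. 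With $w:=Su$ one has $J_{\alpha,\beta}(u)=\tfrac12\|w\|_2^2$, so by scaling, minimizing $J_{\alpha,\beta}$ subject to $\|u\|_{q+1}=1$ is equivalent to maximizing $\|S^{-1}w\|_{q+1}$ subject to $\|w\|_2=1$. The operator $S^{-1}=(-\Delta+a)^{-1/2}(-\Delta+b)^{-1/2}$ acts as convolution with a kernel $K=K_a*K_b$, where $K_c$ denotes the kernel of $(-\Delta+c)^{-1/2}$ on $\re^n$.

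First I would establish existence for $I$. Since its three positive terms control the full $H^2$-norm, $J_{\alpha,\beta}$ is coercive and any minimizing sequence $\{u_k\}$ is bounded in $H^2(\re^n)$. Subcriticality $q+1<2_n^\sharp$ together with the natural scaling of $J_{\alpha,\beta}$ yields the strict subadditivity needed for Lions' concentration--compactness lemma: splitting is excluded by strict subadditivity, and vanishing by the $L^{q+1}$-normalization. After translating suitably, a subsequence converges weakly in $H^2$ and strongly in $L^{q+1}_{\mathrm{loc}}$ to a nonzero $u\in H^2(\re^n)$ with $\|u\|_{q+1}=1$, and lower semicontinuity of $J_{\alpha,\beta}$ yields that $u$ attains $I$.

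The symmetry and positivity come from rearrangement applied not to $u$ itself (which lies only in $H^2$) but to the $L^2$-variable $w=Su$. The crucial fact is that \emph{$K$ is strictly positive, radial, and strictly radially decreasing}. One obtains this via subordination:
\[
K_c(x)=\frac{1}{\sqrt{\pi}}\int_0^\infty t^{-1/2}e^{-ct}\,h_t(x)\,dt,\qquad h_t(x)=(4\pi t)^{-n/2}e^{-|x|^2/(4t)},
\]
which exhibits $K_c$ as a positive superposition of Gaussians; using $h_s*h_t=h_{s+t}$, the convolution $K=K_a*K_b$ is itself a positive superposition of Gaussians, hence strictly positive, radial and strictly radially decreasing. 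Granting this, for any $w\in L^2(\re^n)$ the pointwise bound $|S^{-1}w|=|K*w|\le K*|w|$ and the Riesz rearrangement inequality (applied via duality to $\int(K*|w|)g\,dx$, valid since $K$ is radial strictly decreasing) give
\[
\|S^{-1}w\|_{q+1}\le\|K*|w|^{*}\|_{q+1},\qquad \||w|^{*}\|_2=\|w\|_2.
\]
A maximizer $w$ may therefore be chosen nonnegative, radial, and radially decreasing, and then $u=K*w$ is strictly positive, radial, and strictly radially decreasing.

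The main obstacle is the kernel claim, and it is precisely where the hypothesis $\beta\ge 2\sqrt{\alpha}$ enters: if $\beta<2\sqrt{\alpha}$ the roots $a,b$ become non-real complex conjugates, the factor $e^{-sa-tb}$ in the subordination no longer has uniformly positive real part, and $K$ loses its sign, so no rearrangement argument of this type can survive. Under $\beta\ge 2\sqrt{\alpha}$, however, the Gaussian subordination gives the positivity and monotonicity of $K$ for free, and the remainder is a standard existence-plus-symmetrization argument.
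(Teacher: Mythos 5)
The paper itself gives no proof of this statement; it is imported verbatim as \cite[Theorem 1.1]{BN}, so there is no internal argument to compare against. Your reconstruction is sound and follows the same strategy as Bonheure--Nascimento: factor the quartic symbol into $(|\xi|^2+a)(|\xi|^2+b)$ with $a,b>0$ (available precisely because $\beta^2\ge 4\alpha$), pass to the square-root variable $w=L^{1/2}u$, identify $L^{-1/2}$ as convolution with the subordinated kernel $K=K_a*K_b$, note via the heat-kernel representation and $h_s*h_t=h_{s+t}$ that $K$ is a positive mixture of Gaussians hence positive, radial and radially decreasing, and then invoke Riesz rearrangement in the $w$-variable together with a concentration--compactness existence argument. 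Two small remarks: the strict subadditivity needed to rule out dichotomy follows from the exact homogeneity $m(\lambda)=\lambda^{2/(q+1)}m(1)$ with $2/(q+1)<1$, which is worth stating explicitly; and after rearrangement you only immediately get that $u^{*}=K*|w|^{*}$ is non\-increasing along rays (which is all the statement asserts), whereas strict monotonicity, though true, requires an extra observation and is not needed.
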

Assume from now on that $\beta \geq 2 \sqrt{\alpha}$ and let $U_{\alpha , \beta} \in \mathcal{N}(\alpha , \beta , q)$ be a minimizer like in the theorem.
Similarly consider
$$
E_{\alpha , \beta , q}^+ (u) =  \frac{1}{2}  \int_{\re^n} \left( (\Delta u )^2  +
 \beta  | \nabla  u |^2  +\alpha u^2 \right)dx -  \frac{1}{q+1} \int_{\re^n} (u^+ )^{q+1} dx
$$
and $Y_{\alpha , \beta , q}^+ : \{ u \in H^2 (\re^n ) : u^+ \neq 0 \} \rightarrow \re $,
$$
Y_{\alpha , \beta , q}^+  (u) =   \frac{ J_{\alpha , \beta} (u) }{ \| u^+ \|_{q+1}^2 }. 
$$
Note that if $u\in H^2 (\re^n )$ and  $u^+ \neq 0$, then $Y_{\alpha , \beta , q}^+  (u) \geq Y_{\alpha , \beta , q}  (u) $.

Let 
$$
\mathcal{N}^+ (\alpha , \beta , q)= \bigg\{ u \in H^2 (\re^n ) \setminus \{ 0 \}  : 2  J_{\alpha , \beta} (u) =   \int_{\re^n}     (u^+ )^{q+1} dx \bigg \}.
$$
In the same way as before, we can see that for any $u \in H^2 (\re^n ) $ such that $u^+ \neq 0$ there exists exactly one $\lambda^+ =\lambda^+  (u) >0$ such that $\lambda^+  (u)  u \in  \mathcal{N}^+ (\alpha , \beta , q)$ and that for any positive constant $c$, $Y^+_{\alpha , \beta , q} ( c u)   =  Y^+_{\alpha , \beta , q} (u) $.
If we define
$$
m^+_{\alpha , \beta} := \inf_{u \in \mathcal{N}^+ (\alpha , \beta , q)} E_{\alpha , \beta , q}^+(u)  =
 \inf_{u \in \mathcal{N}^+ (\alpha , \beta , q)}  \frac{q-1}{2(q+1)} \| u^+  \|_{q+1}^{q+1} \geq 0  ,  
$$
then
\begin{equation*}\label{sal}
\begin{aligned}
	m^+_{\alpha , \beta} &=\frac{q-1}{2(q+1)} \left(    \inf_{u \in \mathcal{N}^+ (\alpha , \beta , q)}   \| u^+  \|_{q+1}^{q-1} \right)^{\frac{q+1}{q-1}} \\
	&=\frac{q-1}{q+1} \  2^{\frac{2}{q-1}} 
	\left( \inf_{u  \in H^2 (\re^n ), \ u^+ \neq 0 }  
	Y^+_{\alpha , \beta , q} (u) \right)^{\frac{q+1}{q-1}}\\
	&= \frac{q-1}{q+1}   \  2^{\frac{2}{q-1}}  \left(  \inf_{u  \in H^2 (\re^n ), \| u^+ \|_{q+1} =1 } J_{\alpha , \beta} (u) 
	\right)^{\frac{q+1}{q-1}}.
\end{aligned}
\end{equation*}
\begin{remark}
It follows easily from the previous observations that $m_{\alpha , \beta} \leq m^+_{\alpha , \beta} $. 
But since we have a minimizer for $m_{\alpha , \beta}$ which is a positive function it follows that actually
 $m_{\alpha , \beta} = m^+_{\alpha , \beta} $.
\end{remark}

Throughout the paper, we will use the following notation: $B(0,r)$ is the ball in $\mathbb{R}^{N}$ centered at $0$ with radius $r$.

Given $\ep >0$ and $u: \re^n \rightarrow \re$, the function $u_{\ep} (x) = u (\ep^{-1} x)$ 
satisfies the following properties:
$$
\Delta u_{\ep} (x) = \ep^{-2} \Delta u (\ep^{-1} x), \quad \text{and} \quad 
| \nabla u_{\ep} (x) |^2 = \ep^{-2}  | \nabla u (\ep^{-1} x ) |^2.
$$ 
Note also that for any function $f$ we have for any $r>0$,
$$
\int_{B(0,\ep r)} f_{\ep} (x) \,dx = \ep^n \int_{B(0.r)} f(x)\,  dx,
$$
and, in consequence, 
$$
\int_{\re^n} f_{\ep} (x)\, dx = \ep^n \int_{\re^n} f(x) \, dx.
$$
Therefore 
\begin{equation*}\label{3.4}
\ep^{-n} \int_{\re^n} \left(\ep^4 (\Delta u_{\ep} )^2  + \ep^2  \beta | \nabla u_{\ep} |^2 
+ \alpha  u_{\ep} ^2 \right)dx = \int_{\re^n }\left( (\Delta u )^2 + \beta | \nabla u |^2 + \alpha u^2 \right)dx.
\end{equation*}
Now, consider the functionals
$$
J^{\ep}_{\alpha , \beta } (u) :=\frac{ \ep^{-n} }{ 2} \int_{\re^n} \left( \ep^4 (\Delta u )^2  +
 \beta \ep^2   | \nabla  u |^2  +\alpha u^2 \right)dx
$$
and
$$
E^{\ep}_{\alpha , \beta , q } (u): =\frac{ \ep^{-n} }{2} \int_{\re^n} \left(\ep^4  (\Delta u )^2  +
 \ep^2 \beta  | \nabla  u |^2  +\alpha u^2 \right)dx - \frac{\ep^{-n} }{q+1} \int_{\re^n} |u|^{q+1} dx.
$$
Consider
$$
\mathcal{N}(\alpha , \beta , q, \ep):= \bigg\{ u \in H^2 (\re^n ) \setminus \{ 0 \}  : 2J^{\ep}_{\alpha , \beta} (u) = 
\ep^{-n}  \int_{\re^n}     | u |^{q+1} dx  \bigg\}.
$$
Note that $u \in \mathcal{N}(\alpha , \beta ,q)$ if and only if $u_{\ep} \in \mathcal{N}(\alpha , \beta , q, \ep)$ and $J_{\alpha , \beta} (u) = J^{\ep}_{\alpha , \beta } (u_{\ep}),   \   E_{\alpha , \beta ,q} (u) = E^{\ep}_{\alpha , \beta , q} (u_{\ep})$. Therefore, from (\ref{car}), for any $\ep >0$, 
\begin{equation}\label{3.5}
m_{\alpha , \beta } =  \inf_{u \in \mathcal{N}(\alpha , \beta , q, \ep)} E^{\ep}_{\alpha , \beta , q} (u) .
\end{equation}
Let 
\begin{equation} \label{Ueab}
U^{\ep}_{\alpha , \beta } (x) = U_{\alpha , \beta } ({\ep}^{-1}x),
\end{equation}
where $U_{\alpha , \beta} \in \mathcal{N}(\alpha , \beta , q)$ is a minimizer like in the Theorem of Bonheure and Nascimento.
Hence, $U^{\ep}_{\alpha , \beta } \in 
 \mathcal{N}(\alpha , \beta , q, \ep)$  and realizes  $  \inf_{u \in \mathcal{N}(\alpha , \beta , q, \ep)} E^{\ep}_{\alpha , \beta , q} (u)$. 

Define now $Y_{\alpha , \beta , q}^{\ep} :  H^2 (\re^n ) \setminus \{ 0 \} \rightarrow \re$,
$$
Y_{\alpha , \beta , q}^{\ep }  (u) :=   \frac{ J^{\ep}_{\alpha , \beta} (u) }{ \ep^{\frac{-2n}{q+1}} 
 \| u \|_{q+1}^2 }.
$$
and note that from (\ref{car}) and (\ref{3.5}) we get
$$
m_{\alpha , \beta }
\begin{array}[t]{ll}
\displaystyle  =  \inf_{u \in \mathcal{N} (\alpha , \beta , q, \ep)} 
\frac{q-1}{2q+2} \ep^{-n} \|  u  \|^{q+1}_{q+1} \medskip\\
\displaystyle = \frac{q-1}{2(q+1)} \ep^{-n}  \left(    \inf_{u \in \mathcal{N}  (\alpha , \beta , q , \ep )}   \| u  \|_{q+1}^{q-1} \right)^{\frac{q+1}{q-1}} \medskip\\
\displaystyle =\frac{q-1}{q+1} 2^{\frac{2}{q-1} }  \left(    \inf_{u \in \mathcal{N} (\alpha , \beta , q , \ep )} Y^{\ep}_{\alpha , \beta , q}  (u) \right)^{\frac{q+1}{q-1}} \medskip\\
\displaystyle 
=\frac{q-1}{q+1} 2^{\frac{2}{q-1} }  \left(    \inf_{u \in H^2 (\re^n ) \setminus\{ 0 \} } 
Y^{\ep}_{\alpha , \beta , q}  (u) \right)^{\frac{q+1}{q-1}}. 
\end{array}
$$
\begin{definition}
Let $\displaystyle {\bf Y}_{\alpha , \beta , q}^{\ep }:  = 
\inf_{u \in H^2 (\re^n ) \setminus\{ 0 \} }  Y_{\alpha , \beta , q}^{\ep }  (u)$.
\end{definition}
Then we have that 
\begin{equation}\label{3.6}
{\bf Y}_{\alpha , \beta , q}^{\ep }  = 2^{\frac{-2}{q+1}}
\left( \frac{q+1}{q-1} m_{\alpha , \beta } \right)^{\frac{q-1}{q+1}},
\end{equation}
and, for any $u \in H^2 (\re^n )$,
\begin{equation}\label{3.7}
J^{\ep}_{\alpha , \beta} (u) \geq {\bf Y}_{\alpha , \beta , q}^{\ep }  \ 
 \ep^{\frac{-2n}{q+1}} 
 \| u \|_{q+1}^2 .
\end{equation}

Similarly, let
$$
E^{\ep ,+}_{\alpha , \beta , q } (u) =\frac{ \ep^{-n} }{2} \int_{\re^n} \left(\ep^4  (\Delta u )^2  +
 \ep^2 \beta  | \nabla  u |^2  +\alpha u^2\right) dx - \frac{\ep^{-n} }{q+1} \int_{\re^n}  (u^+ )^{q+1} dx.
$$

Consider 
$$
\mathcal{N}^+ (\alpha , \beta , q, \ep)= \bigg\{ u \in H^2 (\re^n ) \setminus \{ 0 \}  : 2 J_{\alpha , \beta, \ep} (u) = 
\ep^{-n}  \int_{\re^n}     ( u^+ )^{q+1} dx \bigg \}.
$$
Note that $u \in \mathcal{N}^+ (\alpha , \beta ,q)$ if and
only if $u_{\ep} \in \mathcal{N}^+ (\alpha , \beta , q, \ep)$ and $J_{\alpha , \beta} (u) =J^{\ep}_{\alpha , \beta } (u_{\ep}),  \ \ E^{+}_{\alpha , \beta ,q} (u) =E^{\ep ,+}_{\alpha , \beta , q} (u_{\ep})$, where 
$$
E^+_{\alpha , \beta,q} (u) := \frac{1}{2} \int_{\re^n}  \left( (\Delta u )^2  + \beta  | \nabla  u |^2  +\alpha u^2 \right)dx - \frac{1 }{q+1} \int_{\re^n}  (u^+ )^{q+1} dx.
 $$
Therefore, we also have that for any $\ep >0$, 
\begin{equation*}
m_{\alpha , \beta } =  \inf_{u \in \mathcal{N}^+ (\alpha , \beta , q, \ep)} E^{\ep ,+}_{\alpha , \beta , q} (u) .
\end{equation*}

Note that if  $u \in H^2 (\re^n )$, $u^+ \neq 0$,  there exists a unique $\lambda^{+, \ep} (u) >0$ such that  $\lambda^{+, \ep} (u) \, u \in \mathcal{N}^+ (\alpha , \beta ,q, \ep )$.

Let $Y_{\alpha , \beta , q}^{\ep , +} : \{ u \in H^2 (\re^n ) : u^+ \neq 0 \} \rightarrow \re $,
$$
Y_{\alpha , \beta , q}^{\ep , +}  (u): =   \frac{ J^{\ep}_{\alpha , \beta} (u) }{ \ep^{\frac{-2n}{q+1}} 
 \| u^+ \|_{q+1}^2 }.
 $$
Then we have that 
$$
m_{\alpha , \beta }
\begin{array}[t]{ll}
\displaystyle =  \inf_{u \in \mathcal{N}^+ (\alpha , \beta , q, \ep)} E^{\ep ,+}_{\alpha , \beta , q} (u)\medskip\\
\displaystyle  = \inf_{u \in \mathcal{N}^+ (\alpha , \beta , q, \ep )}  \frac{q-1}{2(q+1)}  \ep^{-n} \| u^+  \|_{q+1}^{q+1} \medskip\\
 \displaystyle =\frac{q-1}{2(q+1)} \ep^{-n}  \left(    \inf_{u \in \mathcal{N}^+ (\alpha , \beta , q , \ep )}   \| u^+  \|_{q+1}^{q-1} \right)^{\frac{q+1}{q-1}} \medskip\\
 \displaystyle =\frac{q-1}{q+1} \  2^{\frac{2}{q-1}} 
 \left( \inf_{u  \in H^2 (\re^n ), \ u^+ \neq 0 }  
Y^{\ep ,+}_{\alpha , \beta , q} (u) \right)^{\frac{q+1}{q-1}}\medskip\\
 \displaystyle = \frac{q-1}{q+1}   \  2^{\frac{2}{q-1}}  \left(  \inf_{u  \in H^2 (\re^n ), \| u^+ \|_{q+1} =1 } 
J^{\ep}_{\alpha , \beta} (u) 
\right)^{\frac{q+1}{q-1}} .
\end{array}
$$

On $\re^n$ consider a smooth radial cut-off function $\varphi_{s} : \re^n \rightarrow [0,1]$ supported
in $B(0,s /2 )$ such that it is equal to 1 on $B(0,s /4)$. Assume that $| \nabla  \varphi_{s} |^2 , ( \Delta \varphi_{s} )^2
\leq D$, for some positive constant $D$ (which depends on $s$). Then, for $\ep>0$ small, we define
\begin{equation}\label{Ueab2}
U_{\alpha ,\beta}^{\ep , s} :=\, \varphi_s \,U_{\alpha , \beta }^{\ep} 
\end{equation}
where $U_{\alpha , \beta }^{\ep} $ is given by (\ref{Ueab}).
\begin{lemma}\label{lemma41}
$\lim_{\ep \rightarrow 0} \lambda^{+,\ep}  ( U_{\alpha ,\beta}^{\ep , s} )=1$ and 
$\lim_{\ep \rightarrow 0} E_{\alpha , \beta , q }^{\ep , +}  (  \lambda^{+,\ep} ( U_{\alpha ,\beta}^{\ep , s} )  U_{\alpha ,\beta}^{\ep , s} )= m_{\alpha , \beta}$. 
\end{lemma}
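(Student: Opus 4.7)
The plan is to reduce everything to the fixed (unscaled) problem on $\re^n$ via the change of variables $y=\ep^{-1}x$, and then use dominated convergence to show that the cut-off/rescaled test function converges in $H^2(\re^n)$ to the minimizer $U_{\alpha,\beta}$.

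First I would change variables. Setting $w_\ep(y):=U^{\ep,s}_{\alpha,\beta}(\ep y)=\varphi_s(\ep y)\,U_{\alpha,\beta}(y)$, and using the scaling relations $\int(\Delta U^{\ep,s}_{\alpha,\beta})^2\,dx=\ep^{n-4}\!\int(\Delta w_\ep)^2dy$, $\int|\nabla U^{\ep,s}_{\alpha,\beta}|^2dx=\ep^{n-2}\!\int|\nabla w_\ep|^2dy$ and $\int(U^{\ep,s}_{\alpha,\beta})^2dx=\ep^n\!\int w_\ep^2 dy$, one checks directly that
\[
J^{\ep}_{\alpha,\beta}(U^{\ep,s}_{\alpha,\beta})=J_{\alpha,\beta}(w_\ep),\qquad \ep^{-n}\!\int_{\re^n}(U^{\ep,s,+}_{\alpha,\beta})^{q+1}dx=\int_{\re^n}(w_\ep^+)^{q+1}dy.
\]
So the whole problem is recast as studying $w_\ep$ on $\re^n$, with $\ep$ only appearing through the dilated cut-off $\varphi_s(\ep\cdot)$.

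Next I would show $w_\ep\to U_{\alpha,\beta}$ strongly in $H^2(\re^n)$ as $\ep\to 0$. Expanding
\[
\nabla w_\ep=\ep(\nabla\varphi_s)(\ep y)\,U_{\alpha,\beta}+\varphi_s(\ep y)\nabla U_{\alpha,\beta},
\]
\[
\Delta w_\ep=\ep^2(\Delta\varphi_s)(\ep y)\,U_{\alpha,\beta}+2\ep(\nabla\varphi_s)(\ep y)\cdot\nabla U_{\alpha,\beta}+\varphi_s(\ep y)\Delta U_{\alpha,\beta},
\]
the prefactors of $\ep$ and $\ep^2$ make the ``error'' terms vanish in $L^2$ as $\ep\to 0$ (using the uniform bound $|\nabla\varphi_s|^2,(\Delta\varphi_s)^2\le D$), while $\varphi_s(\ep y)\to 1$ pointwise with $|\varphi_s|\le 1$. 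Dominated convergence, with majorants coming from $U_{\alpha,\beta},\nabla U_{\alpha,\beta},\Delta U_{\alpha,\beta}\in L^2(\re^n)$, then gives $w_\ep\to U_{\alpha,\beta}$ in $H^2$, and also $\|w_\ep^+\|_{q+1}\to\|U_{\alpha,\beta}\|_{q+1}$ by Sobolev embedding (since $U_{\alpha,\beta}>0$). Consequently
\[
J^{\ep}_{\alpha,\beta}(U^{\ep,s}_{\alpha,\beta})\longrightarrow J_{\alpha,\beta}(U_{\alpha,\beta}),\qquad \ep^{-n}\|U^{\ep,s,+}_{\alpha,\beta}\|_{q+1}^{q+1}\longrightarrow\|U_{\alpha,\beta}\|_{q+1}^{q+1}.
\]

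Finally, the Nehari condition $\lambda^{+,\ep}(u)u\in\mathcal{N}^+(\alpha,\beta,q,\ep)$ gives the explicit formula
\[
\lambda^{+,\ep}(U^{\ep,s}_{\alpha,\beta})^{q-1}=\frac{2J^{\ep}_{\alpha,\beta}(U^{\ep,s}_{\alpha,\beta})}{\ep^{-n}\int(U^{\ep,s,+}_{\alpha,\beta})^{q+1}dx},
\]
and since $U_{\alpha,\beta}\in\mathcal{N}(\alpha,\beta,q)$ one has $2J_{\alpha,\beta}(U_{\alpha,\beta})=\|U_{\alpha,\beta}\|_{q+1}^{q+1}$. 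Passing to the limit yields $\lambda^{+,\ep}(U^{\ep,s}_{\alpha,\beta})\to 1$. For the second claim, using
\[
E^{\ep,+}_{\alpha,\beta,q}(\lambda u)=\lambda^2 J^{\ep}_{\alpha,\beta}(u)-\tfrac{\lambda^{q+1}}{q+1}\,\ep^{-n}\!\int(u^+)^{q+1}dx,
\]
with $u=U^{\ep,s}_{\alpha,\beta}$ and $\lambda=\lambda^{+,\ep}(u)\to 1$, the right-hand side tends to $J_{\alpha,\beta}(U_{\alpha,\beta})-\tfrac{1}{q+1}\|U_{\alpha,\beta}\|_{q+1}^{q+1}=E_{\alpha,\beta,q}(U_{\alpha,\beta})=m_{\alpha,\beta}$.

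The main technical point is the dominated convergence argument in the second paragraph: one must keep track of the cross terms in $\Delta(\varphi_s(\ep\cdot)U_{\alpha,\beta})$ and confirm that each is dominated by an $\ep$-independent $L^2$ function. Everything else is bookkeeping with the scaling identities and the definition of the Nehari projection.
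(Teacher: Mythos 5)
Your proof is correct and reaches the paper's conclusion by essentially the same mechanism: expanding the derivatives of the cut-off product, using the uniform bound $|\nabla\varphi_s|^2,(\Delta\varphi_s)^2\le D$, and observing that the cross terms are suppressed by powers of $\ep$. The only difference is organizational — you change variables to the fixed $y$-scale first and invoke dominated convergence to get strong $H^2$ convergence $w_\ep\to U_{\alpha,\beta}$, whereas the paper works directly at the $\ep$-scale, bounds the cross terms by Young's inequality to get $(1\pm 2\delta)$ two-sided estimates, and then rescales at the end; this is the same content in a slightly different packaging, with yours arguably a bit cleaner since it isolates a single limit in $H^2(\re^n)$. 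One small simplification you could note: since $U_{\alpha,\beta}>0$ and $\varphi_s\ge 0$, one has $w_\ep^+=w_\ep$, so the convergence of the $L^{q+1}$-norm follows from dominated convergence directly, without needing the Sobolev embedding.
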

\begin{proof} Note that
$$ \lambda^{+,\ep}  ( u ) = \left(         \frac{  \int_{\re^n} \left(\ep^4   (\Delta u )^2  +
 \ep^2 \beta  | \nabla  u |^2  +\alpha u^2 \right)dx   }{        \int_{\re^n}    (u^+ )^{q+1} dx     }       
\right)^{\frac{1}{q-1}}
$$

We have that $\Delta  U_{\alpha ,\beta}^{\ep , s}  = \Delta (U_{\alpha , \beta }^{\ep} \,\varphi_s)  =
\varphi_s \,\Delta  U_{\alpha ,\beta}^{\ep }  + 2 \langle \nabla   U_{\alpha ,\beta}^{\ep } , \nabla \varphi_s \rangle
+  U_{\alpha ,\beta}^{\ep }  \Delta \varphi_s $.

Then 
$$
\left|\Delta  U_{\alpha ,\beta}^{\ep , s} \right|^2 
\begin{array}[t]{ll}\displaystyle =  \left| \varphi_s \Delta  U_{\alpha ,\beta}^{\ep }  \right|^2 + \left| 2 \langle \nabla   U_{\alpha ,\beta}^{\ep }  , \nabla \varphi_s \rangle
+   U_{\alpha ,\beta}^{\ep }  \Delta \varphi_s \right|^2 \medskip\\
\displaystyle\qquad  + 
2 \varphi_s \Delta   U_{\alpha ,\beta}^{\ep } 
 \left( 2 \langle \nabla  U_{\alpha ,\beta}^{\ep }  , \nabla \varphi_s \rangle
+  U_{\alpha ,\beta}^{\ep }  \Delta \varphi_s \right).
\end{array} 
$$

Note that for any $\delta >0$ we have that 
$$ |  2 \varphi_s \Delta  U_{\alpha ,\beta}^{\ep } 
(  2 \langle \nabla   U_{\alpha ,\beta}^{\ep }  , \nabla \varphi_s \rangle
+  U_{\alpha ,\beta}^{\ep }  \Delta \varphi_s )  |  \leq \delta \left| \varphi_s^2 \Delta   U_{\alpha ,\beta}^{\ep } 
\right|^2 + \delta^{-1} 
\left| 2 \langle \nabla  U_{\alpha ,\beta}^{\ep }  , \nabla \varphi_s \rangle
+  U_{\alpha ,\beta}^{\ep }  \Delta \varphi_s \right|^2 .
$$

It follows that we can pick $\delta$ small and write
$$
\begin{array}[t]{ll}
\displaystyle  \varphi^2_s (1 - \delta )  \left( \ep^4  (\Delta U_{\alpha , \beta }^{\ep} )^2 + O(\ep^4 )
(| \nabla U_{\alpha , \beta }^{\ep} |^2 + (U_{\alpha , \beta }^{\ep} )^2) \right) \medskip\\
\displaystyle 
\quad \leq
\ep^4 ( \Delta  U_{\alpha ,\beta}^{\ep , s} )^2 \medskip\\
\displaystyle 
\quad \leq \varphi^2_s  (1+ \delta )  ( \ep^4  (\Delta U_{\alpha , \beta }^{\ep} )^2 + O(\ep^4 )
(| \nabla U_{\alpha , \beta }^{\ep} |^2 + (U_{\alpha , \beta }^{\ep} )^2 ) .
\end{array}
$$

Similarly 
$$ 
| \nabla U_{\alpha ,\beta}^{\ep , s} |^2 = |  \varphi_s \nabla U_{\alpha ,\beta}^{\ep } 
+ U_{\alpha ,\beta}^{\ep} \nabla
\varphi_s |^2 \leq (1+ \delta ) \varphi_s^2 | \nabla U_{\alpha ,\beta}^{\ep } |^2 + (1+\delta^{-1} ) 
| \nabla \varphi_s |^2  ( U_{\alpha ,\beta}^{\ep } )^2 
$$
and 
$$ 
| \nabla U_{\alpha ,\beta}^{\ep , s} |^2   \geq (1- \delta ) \varphi_s^2 | \nabla U_{\alpha ,\beta}^{\ep } |^2 + (1-\delta^{-1} ) 
| \nabla \varphi_s |^2  ( U_{\alpha ,\beta}^{\ep } )^2 .
$$

It follows that
$$
\begin{array}[t]{lll}
\displaystyle \int_{\re^n} \big( \ep^4( \Delta  U_{\alpha ,\beta}^{\ep , s} )^2  +\ep^2 \beta | \nabla U_{\alpha ,\beta}^{\ep , s} |^2 +
\alpha (  U_{\alpha ,\beta}^{\ep , s} )^2 \big)\,dx \medskip\\
\displaystyle\quad \leq  \int_{\re^n} \big(\ep^4  (1+\delta ) \varphi_s^2  ( \Delta  U_{\alpha ,\beta}^{\ep } )^2  
+\ep^2 (1+\delta ) \beta  \varphi_s^2  (1+o(\ep^2 ) )  | \nabla U_{\alpha ,\beta}^{\ep } |^2 +
\alpha \varphi_s^2 (1+o(\ep^2 )) (  U_{\alpha ,\beta}^{\ep } )^2 \big) \,dx.
\end{array}
$$
Therefore for any $\delta >0$ we can choose $\ep_0 >0$ such that for every $\ep \in (0,\ep_0)$ we have that 
\begin{equation} 
\begin{array}[t]{lll}
\displaystyle \ep^{-n}\int_{\re^n} \big(\ep^4 ( \Delta  U_{\alpha ,\beta}^{\ep , s} )^2  +\ep^2 \beta | \nabla U_{\alpha ,\beta}^{\ep , s} |^2 + \alpha (  U_{\alpha ,\beta}^{\ep , s} )^2\big)\, dx \medskip\\
\displaystyle\qquad \leq   \ep^{-n}   (1+2\delta )\int_{\re^n} \big(\ep^4 (\Delta  U_{\alpha ,\beta}^{\ep } )^2  +\ep^2 \beta | \nabla U_{\alpha ,\beta}^{\ep } |^2 +
\alpha (  U_{\alpha ,\beta}^{\ep } )^2 \big)\,dx\medskip\\
\displaystyle \label{4.2}
\qquad= (1+2\delta) 
\int_{\re^n} \big( (\Delta  U_{\alpha ,\beta} )^2  +\beta | \nabla U_{\alpha ,\beta} |^2 +
\alpha (  U_{\alpha ,\beta} )^2 \big)\, dx .
\end{array}
\end{equation}

In the same way we can assume that for the same values of $\delta , \ep_0$, and any $\ep \in (0,\ep_0 )$ to get 
\begin{equation}
\begin{array}[t]{ll}
\displaystyle \ep^{-n} \int_{\re^n} \big(\ep^4 (\Delta  U_{\alpha ,\beta}^{\ep , s} )^2  +\ep^2 \beta | \nabla U_{\alpha ,\beta}^{\ep , s} |^2 + \alpha (  U_{\alpha ,\beta}^{\ep , s} )^2\big)\, dx \medskip\\
\label{4.3}
\displaystyle \qquad \geq
(1-2\delta) 
\int_{\re^n}  \big(\Delta ( U_{\alpha ,\beta} )^2  +\beta | \nabla U_{\alpha ,\beta} |^2 +
\alpha (  U_{\alpha ,\beta} )^2 \big)\, dx .\end{array}
\end{equation}

We can also see from (\ref{Ueab}) and (\ref{Ueab2}), that
\begin{equation}\label{4.4}
\lim_{\ep \rightarrow 0} \ \ep^{-n}  \int_{\re^{n}} (U_{\alpha ,\beta}^{\ep , s})^{q+1} dx = \int_{\re^{n}} 
 (  U_{\alpha ,\beta} )^{q+1}.
\end{equation}

Since $U_{\alpha , \beta} \in \mathcal{N}({\alpha , \beta , q})$ we have that  $\lim_{\ep \rightarrow 0} \lambda^{+,\ep}  (U_{\alpha ,\beta}^{\ep , s})=1$.

Therefore we are only left to prove that  $\lim_{\ep \rightarrow 0} E_{\alpha , \beta , q }^{\ep , +}   (U_{\alpha ,\beta}^{\ep , s})= m_{\alpha , \beta}$.
But this also follows directly from (\ref{4.2}), (\ref{4.3}), (\ref{4.4}).
\end{proof}

\section{The variational setting on a general Riemannian manifold}\label{section5}

Consider a closed $n$-dimensional Riemannian manifold $(M ,g)$. Let $\alpha , \beta$ be positive constants,
$\beta^2 \geq 4\alpha$. As defined in the previous section, for
Euclidean space, we consider
$J^{g,\ep}_{\alpha , \beta } : H^2 (M) \rightarrow \re$ where 
$$
J^{g, \ep}_{\alpha , \beta } (u) = \frac{\ep^{-n} }{2}  \int_{M} \left( \ep^4 ( \Delta_g u )^2  +
 \beta \ep^2   | \nabla  u |^2  +\alpha u^2 \right)dv_g.
$$

For $q>1$ such that if $n\geq 5$,  $q<\frac{n+4}{n-4}$,  let
$$
E^{g,\ep ,+}_{\alpha , \beta , q } (u) =\frac{ \ep^{-n} }{2} \int_{M} \left(\ep^4  (\Delta_g u )^2  +
 \ep^2 \beta  | \nabla  u |^2  +\alpha u^2 \right)dv_g - \frac{\ep^{-n} }{q+1} \int_{M}  (u^+ )^{q+1} dv_g.
$$

Let 
$$
\mathcal{N}^+_g (\alpha , \beta , q, \ep)=\bigg \{ u \in H^2 (M ) \setminus \{ 0 \}  : 2 J^{\ep}_{\alpha , \beta} (u) = 
\ep^{-n}  \int_{M}     (u^+ )^{q+1} dv_g  \bigg\},
$$ 
%
$$
m^{g,+}_{\alpha , \beta , \ep} = \inf_{u \in \mathcal{N}^+_g (\alpha , \beta , q ,\ep )} E^{\ep , +}_{\alpha , \beta , q} (u)  =
 \inf_{u \in \mathcal{N}^+_g(\alpha , \beta , q, \ep )}  \ep^{-n}  \frac{q-1}{2(q+1)} \| u^+  \|^{q+1}_{L^{q+1}(M)} \geq 0  .  
$$

Note as before that for any $u \in H^2 (M )$ such that $u^+ \neq 0$ there is a unique positive number
$\lambda^{g,\ep ,+} (u)$ such that $\lambda^{g,\ep ,+} (u)  .   u \in \mathcal{N}^+_g (\alpha , \beta , q, \ep)$.

Also define $Y^{g,\ep ,+}_{\alpha , \beta , q } : \{ u \in H^2 (M) : u^+ \neq 0 \} \rightarrow \re$, 
$$
Y^{g,\ep ,+}_{\alpha , \beta , q } (u)
\begin{array}[t]{ll} \displaystyle =  \frac{ \frac{ \ep^{-n} }{2}  \int_{M}\left(\ep^4  ( \Delta_g u )^2  +
 \ep^2 \beta  | \nabla  u |^2  +\alpha u^2 \right)dv_g  }{  \left( \ep^{-n} \int_{M}  (u^+ )^{q+1} dv_g 
\right)^{\frac{2}{q+1}} }\medskip\\ \displaystyle = \ep^{-n\frac{q-1}{q+1}} \frac{ \frac{ 1 }{2}  \int_{M} \left(\ep^4  ( \Delta_g u )^2  +
 \ep^2 \beta  | \nabla  u |^2  +\alpha u^2 \right)dv_g  }{  \left(  \int_{M}  (u^+ )^{q+1} dv_g 
\right)^{\frac{2}{q+1}} }.\end{array}
$$
Note that if $u \in \mathcal{N}^+_g(\alpha , \beta , q, \ep )$ then $Y^{g,\ep ,+}_{\alpha , \beta , q } (u)  =(1/2) 
\ep^{-n\frac{q-1}{q+1}} \| u^+  \|^{q-1}_{L^{q+1}(M)}$. Then

\begin{equation} m^{g,+}_{\alpha , \beta , \ep}  \begin{array}[t]{lll}
\displaystyle =  \inf_{u \in \mathcal{N}_g^+(\alpha , \beta , q, \ep )}  \ep^{-n}  \frac{q-1}{2(q+1)} \| u^+  \|_{L^{q+1}(M)}^{q+1}\medskip\\ 
\displaystyle =  \frac{q-1}{2(q+1)} \bigg(   \inf_{u \in \mathcal{N}_g^+(\alpha , \beta , q, \ep )}   \ep^{-n\frac{q-1}{q+1}}   \| u^+  \|_{L^{q+1}(M)}^{q-1} 
\bigg)^{\frac{q+1}{q-1} }\medskip\\ 
\displaystyle= \frac{q-1}{q+1} 2^{\frac{q+1}{q-1}}  \left(   \inf_{u \in \mathcal{N}^+(\alpha , \beta , q, \ep )}  Y^{g,\ep ,+}_{\alpha , \beta , q }
 (u) \right)^{\frac{q+1}{q-1}} 
 \medskip\\ 
\displaystyle \label{4.1}
= \frac{q-1}{q+1} 2^{\frac{2}{q-1}}  \left(   \inf_{u \in  \{ u \in H^2 (M) : u^+ \neq 0 \}  }  Y^{g,\ep ,+}_{\alpha , \beta , q }
 (u) \right)^{\frac{q+1}{q-1}}.\end{array}
\end{equation}
\begin{remark}\label{remark4}
For any $x\in M$ identify the geodesic ball of some small radius $r$ with the ball of radius $r$ centered at
0 in $\re^n$ using the exponential map $exp_x$. Let $\overline{U}_{\alpha ,\beta ,x}^{\ep , s}  = exp_x \circ U_{\alpha ,\beta}^{\ep , s}$.
Note that for any $\delta >0$ we can find $r>0$ such that on the ball $B_g(0,r)$  is $\delta$-close to
the Euclidean metric in the $C^1$-topology. It follows that the volume elements, gradients, and 
Laplacian corresponding to $g$ and to the Euclidean metric are $\delta$-close. 
\end{remark}

Since for $\ep$ small enough
we can get that $ U_{\alpha ,\beta}^{\ep , s}$ is concentrated in any small ball it follows 
directly from the previous remark and  Lemma \ref{lemma41} that:
\begin{proposition} \label{propo53}We have 
$$\lim_{\ep \rightarrow 0} \lambda^{g,\ep ,+}  (\overline{U}_{\alpha ,\beta ,x}^{\ep , s } )  
=1\quad\mbox{and}\quad 
\lim_{\ep \rightarrow 0} E^{\ep ,+}_{\alpha , \beta , q}  (  \lambda^{g,\ep ,+}  (\overline{U}_{\alpha ,\beta, x}^{\ep , s} )
 \overline{U}_{\alpha ,\beta , x }^{\ep , s} )= m_{\alpha , \beta}.$$ 
\end{proposition}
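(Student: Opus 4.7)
The plan is to transfer Lemma 4.1 from $\re^n$ to the manifold via the exponential chart at $x$, using Remark 5.4 to control the error introduced by the curvature. Fix $\delta>0$. Since $U_{\alpha,\beta}$ is radially decreasing with integrable tails in the $H^2$--norm weighted by the $L^{q+1}$-norm, by choosing the cut-off radius $s$ large enough the truncation error in Lemma 4.1 can be made smaller than $\delta$. Then, for $\ep$ small, the rescaled function $U^{\ep,s}_{\alpha,\beta}(y)=\varphi_s(y)U_{\alpha,\beta}(\ep^{-1}y)$ is supported in the ball $B(0,s/2)$, and the pull-back $\overline{U}^{\ep,s}_{\alpha,\beta,x}=U^{\ep,s}_{\alpha,\beta}\circ\exp_x^{-1}$ is supported in a geodesic ball $B_g(x,s/2)$ on which, by Remark 5.4, the metric, volume element, gradient, and Laplacian agree with their Euclidean counterparts up to a multiplicative error bounded by $\delta$.

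With that in place, the heart of the argument is the comparison
\[
(1-C\delta)\,\mathcal{I}^{\ep}(U^{\ep,s}_{\alpha,\beta})\;\leq\;\mathcal{I}^{g,\ep}(\overline{U}^{\ep,s}_{\alpha,\beta,x})\;\leq\;(1+C\delta)\,\mathcal{I}^{\ep}(U^{\ep,s}_{\alpha,\beta}),
\]
for each of the three quadratic pieces $\ep^4\!\int(\Delta u)^2$, $\ep^2\!\int|\nabla u|^2$, $\int u^2$, together with a corresponding two-sided bound for $\ep^{-n}\!\int(u^+)^{q+1}$. The $L^2$ and $L^{q+1}$ comparisons follow from closeness of the volume forms alone; the gradient comparison uses closeness of $g^{ij}$; the tricky term is the Laplacian. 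In normal coordinates, $\Delta_g-\Delta_{\mathrm{eucl}}$ involves one derivative of $g^{ij}$ and of $\sqrt{\det g}$, both of which are $O(|y|)$ near $x$. Applied to $U^{\ep,s}_{\alpha,\beta}$, whose gradient at scale $\ep$ blows up as $\ep^{-1}$ but against an integration of radius $O(\ep)$ (after another rescaling by $y=\ep z$, this contributes an extra $\ep$), the cross terms vanish in the limit. Combining these, each of the numerator and denominator entering $\lambda^{g,\ep,+}$ is within a factor $(1\pm C\delta)$ of the Euclidean numerator and denominator entering $\lambda^{+,\ep}$.

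Since
\[
\lambda^{g,\ep,+}(u)=\Bigl(\tfrac{2J^{g,\ep}_{\alpha,\beta}(u)}{\ep^{-n}\int(u^+)^{q+1}dv_g}\Bigr)^{\frac{1}{q-1}},
\]
the comparison above implies
\[
(1-C\delta)^{\frac{1}{q-1}}\lambda^{+,\ep}(U^{\ep,s}_{\alpha,\beta})\;\leq\;\lambda^{g,\ep,+}(\overline{U}^{\ep,s}_{\alpha,\beta,x})\;\leq\;(1+C\delta)^{\frac{1}{q-1}}\lambda^{+,\ep}(U^{\ep,s}_{\alpha,\beta}).
\]
Taking $\ep\to 0$ and using Lemma 4.1 gives $\limsup_{\ep\to 0}|\lambda^{g,\ep,+}(\overline{U}^{\ep,s}_{\alpha,\beta,x})-1|\leq C'\delta$, and letting $\delta\to 0$ yields the first claim. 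For the second claim, substitute $\lambda^{g,\ep,+}(\overline{U}^{\ep,s}_{\alpha,\beta,x})\overline{U}^{\ep,s}_{\alpha,\beta,x}\in\mathcal{N}^+_g(\alpha,\beta,q,\ep)$ into $E^{g,\ep,+}_{\alpha,\beta,q}$; using that on the Nehari set the energy reduces to $\frac{q-1}{2(q+1)}\ep^{-n}\|u^+\|_{q+1}^{q+1}$, the same two-sided comparison combined with Lemma 4.1 gives convergence to $m_{\alpha,\beta}$.

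The main obstacle I expect is the careful handling of the $\ep^4(\Delta_g u)^2$ term, because the Laplacian on $M$ differs from the Euclidean one by first-order-in-derivatives-of-$g$ terms that act on a function whose second derivatives blow up as $\ep^{-2}$; what saves us is the matching $\ep^4$ weight and the fact that in normal coordinates these correction terms are $O(|y|)=O(\ep\,|z|)$ after rescaling, so they contribute an extra factor of $\ep$ which vanishes in the limit, provided the support is inside the radius on which normal coordinates are valid — hence the necessity of first fixing $s$ and only afterwards sending $\ep\to 0$.
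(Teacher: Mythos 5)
Your proposal follows the same strategy as the paper: transport Lemma 4.1 to the manifold via the exponential chart at $x$ and control the geometric error with Remark 5.4. The paper itself gives no separate proof, stating only that the proposition ``follows directly'' from the remark and Lemma 4.1, so your more detailed comparison of the quadratic pieces and the careful treatment of the Laplacian correction is a reasonable filling-in of that gap.

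One small imprecision worth flagging: you say ``by choosing the cut-off radius $s$ large enough the truncation error in Lemma 4.1 can be made smaller than $\delta$,'' but Lemma 4.1 is a limit statement as $\ep\to 0$ for \emph{any} fixed $s$ (the relevant parameter is $s/\ep\to\infty$, not $s\to\infty$), and in fact $s$ must be taken \emph{small} so that the support $B(0,s/2)$ sits inside the normal-coordinate ball $B_g(0,r)$ on which Remark 5.4 applies. As written, ``$s$ large'' pulls in the wrong direction and is in tension with your own closing observation that the support must stay inside the chart; the resolution is simply that no lower bound on $s$ is needed, only that it be a fixed value smaller than the chart radius $r$. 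The rest of your argument (two-sided control of each term in $\lambda^{g,\ep,+}$ and on the Nehari-restricted energy, followed by $\ep\to 0$ and then $\delta\to 0$) is sound and agrees with the paper's intent.
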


\begin{definition}\label{definition4.4} With the notations from the previous remark 
define ${\bf i} : M \rightarrow \mathcal{N}(\alpha , \beta , q, \ep)$ by $${\bf i} (x):= 
 \lambda^{g,\ep , +}  (\overline{U}_{\alpha ,\beta, x}^{\ep , s} )
 \overline{U}_{\alpha ,\beta , x }^{\ep , s}.$$ 
\end{definition}

We prove
\begin{theorem}\label{theorem4}For any $n$-dimensional closed Riemannian manifold $(M ,g)$ we have that 
$$\lim_{\ep \rightarrow 0} m^{g,+}_{\alpha , \beta , \ep}  = m_{\alpha , \beta}.$$
\end{theorem}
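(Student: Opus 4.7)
The plan is to prove the equality by matching upper and lower bounds, for each of which I would leverage the near-Euclidean geometry on small geodesic balls of $M$.

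For the upper bound I would use the construction from Definition \ref{definition4.4}: fix any $x \in M$; the function ${\bf i}(x) = \lambda^{g,\ep,+}(\overline{U}_{\alpha,\beta,x}^{\ep,s})\, \overline{U}_{\alpha,\beta,x}^{\ep,s}$ lies in $\mathcal{N}^+_g(\alpha,\beta,q,\ep)$ by construction, so
\[
m^{g,+}_{\alpha,\beta,\ep} \leq E^{g,\ep,+}_{\alpha,\beta,q}({\bf i}(x)),
\]
and Proposition \ref{propo53} gives $\lim_{\ep \to 0} E^{g,\ep,+}_{\alpha,\beta,q}({\bf i}(x)) = m_{\alpha,\beta}$, yielding $\limsup_{\ep \to 0} m^{g,+}_{\alpha,\beta,\ep} \leq m_{\alpha,\beta}$.

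For the lower bound, formula (\ref{4.1}) reduces the task to showing that $\liminf_{\ep \to 0} \inf_{u^+\neq 0} Y^{g,\ep,+}_{\alpha,\beta,q}(u) \geq {\bf Y}^{\ep}_{\alpha,\beta,q}$, i.e., that the Euclidean inequality (\ref{3.7}) survives passage to $M$ up to vanishing error. My preferred route is concentration-compactness on a near-minimizing sequence $u_\ep \in \mathcal{N}^+_g(\alpha,\beta,q,\ep)$. The subcritical growth $q < (n+4)/(n-4)$ and a localized Lions-type argument force $u_\ep$ to concentrate: there is a point $x_\ep \in M$ (converging to some $x_\infty$) carrying all but $o(1)$ of the $L^{q+1}$ mass in a geodesic ball of arbitrarily small radius around $x_\ep$. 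Rescaling $\bar u_\ep(y) := u_\ep(\exp_{x_\ep}(\ep y))$ on $\re^n$, the $\delta$-closeness of the pulled-back metric to the Euclidean one on any fixed Euclidean ball $B(0,R)$ (Remark \ref{remark4}, applied at scale $\ep$) converts $J^{g,\ep}_{\alpha,\beta}(u_\ep)$ and the Riemannian $L^{q+1}$ mass into $J^{\ep}_{\alpha,\beta}(\bar u_\ep)$ and the Euclidean $L^{q+1}$ integral of $\bar u_\ep$, up to multiplicative $(1+o(1))$ factors. A weak limit $U \in H^2(\re^n)$ with $U^+ \neq 0$, together with weak lower semicontinuity of $J$ and Rellich compactness in the subcritical $L^{q+1}$ norm on bounded sets, produces $\lambda^+(U)\, U \in \mathcal{N}^+(\alpha,\beta,q)$ with
\[
m_{\alpha,\beta} \leq E^+_{\alpha,\beta,q}(\lambda^+(U)\, U) \leq \liminf_{\ep \to 0} E^{g,\ep,+}_{\alpha,\beta,q}(u_\ep).
\]

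The main obstacle is the concentration step: ruling out that a positive fraction of the $L^{q+1}$ mass escapes any fixed geodesic ball around $x_\ep$. The cleanest argument is by contradiction: if mass split into two pieces supported in charts that stay a definite geodesic distance apart, the Euclidean bound (\ref{3.7}) applied separately on each chart (using Remark \ref{remark4} to transfer the inequality to $M$) would force the total energy to exceed $2\,m_{\alpha,\beta} - o(1)$, contradicting the already-proven upper bound $E^{g,\ep,+}_{\alpha,\beta,q}(u_\ep) \leq m_{\alpha,\beta} + o(1)$. Once concentration is established, the passage to the Euclidean limit is exactly the comparison recorded in Remark \ref{remark4} and already carried out in the proof of Lemma \ref{lemma41}.
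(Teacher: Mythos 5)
Your upper bound argument is identical to the paper's. The lower bound is where you diverge substantially, and I want to flag both the difference in strategy and a concrete issue with your sketch.

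\smallskip

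\emph{What the paper does.} The paper's lower bound is a direct partition-of-unity argument: cover $M$ by finitely many normal charts in which $g$ is $(1+\delta)$-close to the Euclidean metric, take a partition of unity $\{\varphi_k^2\}$, and estimate
\[
\left\| \ep^{-n/(q+1)} u^+\right\|_{q+1}^2
 \leq \sum_k \left( \ep^{-n}\!\int |\varphi_k u|^{q+1}\,dv_g\right)^{2/(q+1)}
 \leq (1+\delta)^{O(1)}\, \big({\bf Y}^{\ep}_{\alpha,\beta,q}\big)^{-1} \sum_k J^{\ep}_{\alpha,\beta}(\varphi_k u)
 \leq (1+\delta)^{O(1)}\,\big({\bf Y}^{\ep}_{\alpha,\beta,q}\big)^{-1} J^{g,\ep}_{\alpha,\beta}(u),
\]
applying the Euclidean inequality \eqref{3.7} chartwise and absorbing the cutoff-derivative terms for $\ep$ small. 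This gives a \emph{uniform} lower bound $Y^{g,\ep,+}_{\alpha,\beta,q}(u) \geq (1+\delta)^{-4}\,{\bf Y}^\ep_{\alpha,\beta,q}$ valid for every $u$, which immediately passes to the infimum via \eqref{3.6} and \eqref{4.1}. No minimizing sequence, no weak limits, no concentration.

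\smallskip

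\emph{What you propose, and why it is harder (and not fully correct as stated).} You propose concentration-compactness on a near-minimizing sequence, rescaling, and weak lower semicontinuity. This could in principle work, but two points. First, your dichotomy rule-out is misstated: if the $L^{q+1}$ mass splits as $\theta$ and $1-\theta$ into well-separated bumps, the energy lower bound coming from \eqref{3.7} applied on each bump is controlled by $\theta^{2/(q-1)} + (1-\theta)^{2/(q-1)}$ times $m_{\alpha,\beta}$ (this is precisely what the paper's Corollary \ref{coro65} records, via the strictly subadditive function $\Psi$), not by $2m_{\alpha,\beta}$; the factor $2$ is attained only for an even split. The contradiction still works by strict subadditivity, but the quantitative claim in your sketch is wrong. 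Second, and more importantly, the concentration machinery you are invoking is essentially what the paper develops in Section \ref{section6} (Lemma \ref{lema67}, Corollary \ref{coro65}, Theorem \ref{theorem5.2}), and the paper's Section \ref{section6} \emph{uses} Theorem \ref{theorem4} as an input. So you cannot appeal to that section; you would have to build the Lions-type concentration lemma from scratch inside the proof of Theorem \ref{theorem4}. That is substantially more work than the paper's elementary partition-of-unity computation, which is engineered precisely to bypass weak limits and compactness. Your approach is salvageable, but it inverts the paper's logical order and costs more than it buys here.
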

\begin{proof} 
Since for any $x\in M$, 
$m^{g,+}_{\alpha , \beta , \ep}  \leq  E^{\ep ,+}_{\alpha , \beta , q} ({\bf i} (x) )$
it follows from the previous proposition that $\limsup_{\ep \rightarrow 0} 
m^{g,+}_{\alpha , \beta , \ep} \leq  m_{\alpha , \beta}$.

To complete the proof of the theorem we   prove that 
$\liminf_{\ep \rightarrow 0} m^{q,+}_{\alpha , \beta , \ep}
 \geq  m_{\alpha , \beta}$.

For any $\delta \in (0,1)$ we can pick normal neighborhoods $U_1 , ..., U_L$ such
that in normal coordinates $g$ is $C^1$-close to the Euclidean metric. Namely
$$
(1+\delta )^{-1}  \leq g_{ii} \leq (1+\delta) 
$$
$$ 
|g_{ij} | \leq \delta , \  \  \  i\neq j
$$
$$ 
|g_{ij,k} | \leq \delta  
$$
and
$$
(1+\delta )^{-1} dx  \leq dv_g \leq (1+\delta)  dx 
$$
Let $\varphi_k^2$, $k=1, \dots , L$ be a partition of unity subordinate to $U_1 , \dots , U_L$.
Let $D$ be an upper bound for $| \nabla \varphi_k |^2 , ( \Delta \varphi_k )^2$, $k=1, \dots , L$.

For any $u\in H^2 (M)$ we have
$$
\| \ep^{\frac{-n}{q+1}}  u^+ \|^2_{L^{q+1}(M)}  \begin{array}[t]{ll} \displaystyle \leq  \|  \ep^{\frac{-n}{q+1}} u \|^2_{L^{q+1}(M)}\medskip\\
\displaystyle 
 = \|  \ep^{\frac{-2n}{q+1}} u^2 \|_{L^{\frac{q+1}{2}}(M)} \medskip\\
\displaystyle   
= \Big\| \sum_k   \ep^{\frac{-2n}{q+1}}  \varphi_k^2 u^2 \Big\|_{L^{\frac{q+1}{2}}(M)} \medskip\\
\displaystyle  \leq \sum_k   \left( \ep^{-n}  \int_{U_k} | \varphi_k u  |^{q+1} dv_g \right)^{\frac{2}{q+1}} \medskip\\
\displaystyle  
\leq (1+\delta )^{\frac{2}{q+1}} \sum_k   \left( \ep^{-n}  \int_{U_k} | \varphi_k u  |^{q+1} dx \right)^{\frac{2}{q+1}} .\end{array}
$$

Using (\ref{3.7}) we get 
$$ 
\left( \ep^{-n}  \int_{U_k} | \varphi_k u  |^{q+1} dx \right)^{\frac{2}{q+1}}    \leq
\left( {\bf Y}^{\ep}_{\alpha , \beta , q} \right)^{-1}  J^{\ep}_{\alpha , \beta } (\varphi_k u ). 
$$
But
$$ 
J^{\ep}_{\alpha , \beta } (\varphi_k u ) \begin{array}[t]{ll}
\displaystyle =\frac{\ep^{-n}}{2} \int_{U_k} \big(
\ep^4 ( \Delta (\varphi_k u )  )^2 + \ep^2  \alpha | \nabla (\varphi_k u ) |^2 
+\beta (\varphi_k u )^2 \big)\,dx  \medskip\\
\displaystyle \leq 
(1+\delta )^2 \frac{\ep^{-n}}{2} \int_{U_k} \big(\ep^4 ( \Delta_g (\varphi_k u )  )^2 +
\ep^2 \alpha | \nabla (\varphi_k u ) |^2 
+\beta (\varphi_k u )^2 \big)\,dv_g. \end{array}
$$

Arguing as in the proof of Lemma \ref{lemma41} we can now see that for $\ep$ small enough
$$ 
J^{\ep}_{\alpha , \beta } (\varphi_k u ) \leq (1+\delta ) (1+\delta )^2 
\frac{\ep^{-n}}{2} \int_{U_k} \ep^4  \varphi_k^2 ( \Delta_g u  )^2 +
\ep^2 \alpha \varphi_k^2  | \nabla u |^2 
+\beta \varphi_k^2 u^2 dv_g .
$$

Then 
$$
\begin{array}[t]{ll}\displaystyle 
 \left( \ep^{-n} \int_M  ( u^+ )^{q+1} dv_g \right)^{\frac{2}{q+1}} 
\medskip\\
\displaystyle \qquad\leq  (1+\delta )^{\frac{2}{q+1}} \left( {\bf Y}^{\ep}_{\alpha , \beta , q} \right)^{-1}  
(1+\delta )^3  \sum_k  
\frac{\ep^{-n}}{2} \int_{U_k} \big(\ep^4  \varphi_k^2 ( \Delta_g u  )^2 +
\ep^2 \alpha \varphi_k^2  | \nabla u |^2 
+\beta \varphi_k^2 u^2 \big)\, dv_g \medskip\\
\displaystyle \qquad = (1+\delta )^{\frac{2}{q+1}} \left( {\bf Y}^{\ep}_{\alpha , \beta , q} \right)^{-1}  (1+\delta )^3 J^{\ep}_{\alpha , \beta } (u).
\end{array}
$$
Therefore
$$ 
Y^{g, \ep ,+}_{\alpha , \beta ,q} (u) \geq  \frac{1}{ (1+\delta )^4}  
{\bf Y}^{\ep}_{\alpha , \beta , q} .
$$
Then by (\ref{3.6}) and (\ref{4.1}) it follows that 
$$
m^{g,+}_{\alpha , \beta , \ep}  \geq  \frac{1}{ (1+\delta )^4}  m_{\alpha , \beta}.
$$
Since this could be done for any $\delta >0$ it follows that 
$$
\liminf_{\ep \rightarrow 0} m^{g,+}_{\alpha , \beta , \ep} 
 \geq  m_{\alpha , \beta}.
$$
\end{proof}

Now define
$$
J_{\ep}^g (u):=\frac{ \ep^{-n}}{2}  \int_M \ep^4 u  \, P_{\geph} u \,dv_g,
$$
where $P_{\geph}$ is given in (\ref{Pgep}).
Observe that $\mathcal{N}^g_{\ep} $ in (\ref{N_ep}) is given by 
$$
\mathcal{N}^g_{\ep} =\bigg \{ u \in H^2 (M) :  \int_M \big(\ep^4 u  P_{\geph} u \big)\,dv_g = \int_M  (u^+ )^{q+1}  dv_g\bigg\},
$$
and $J^{g,+}_{\ep , q}(u) $ in (\ref{J_ep}) is given by 
$$
J^{g,+}_{\ep , q} (u):= \frac{\ep^{-n}}{2}  \int_M  \ep^4 u \, P_{\geph} u\, dv_g - \frac{\ep^{-n}}{q+1}
\int_M  (u^+ )^{q+1} \, dv_g.
$$
We now put
$$
{\bf m}_{\ep} := \inf_{u \in \mathcal{N}^g_{\ep} } J^{g,+}_{\ep , q} (u) .
$$
It follows from Remark \ref{remark1} and Theorem \ref{theorem4} that 
\begin{theorem}\label{theorem5} 
It is verified that 
$$\lim_{\ep \rightarrow 0} {\bf m}_{\ep} = 
m_{a_{N,m}, b_{N,m}}.$$
\end{theorem}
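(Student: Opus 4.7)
The plan is to reduce the claim directly to Theorem \ref{theorem4} by exploiting the uniform comparison between $J^g_\ep$ and the simpler quadratic functional $J^{g,\ep}_{a_{N,m},b_{N,m}}$ provided by Remark \ref{remark1}. The two functionals differ only by the lower-order contributions $\ep^4\,u\,\varphi(u)$ and $\tfrac{N-4}{2}(\ep^4 f_0 + \ep^2 f_2)u^2$ appearing in the expansion of $\ep^4\int_M u\,P_{\geph} u\, dv_g$, and Remark \ref{remark1} says precisely that these contributions are absorbed into a $(1+\delta)^{\pm 1}$ factor when $\ep$ is small, uniformly in $u\in H^2(M)$.

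First, I would mimic formula (4.1) of Section \ref{section5} to derive the scale-invariant characterization
$$
{\bf m}_\ep = \frac{q-1}{q+1}\, 2^{\frac{2}{q-1}}\,\Bigl(\inf_{u\in H^2(M),\, u^+\not\equiv 0} Y^{g,+}_{\ep,q}(u)\Bigr)^{\!\frac{q+1}{q-1}},\qquad Y^{g,+}_{\ep,q}(u) := \frac{J^g_\ep(u)}{\bigl(\ep^{-n}\|u^+\|^{q+1}_{L^{q+1}(M)}\bigr)^{\frac{2}{q+1}}}.
$$
This is a routine exercise of solving the one-parameter rescaling $\lambda\mapsto J^{g,+}_{\ep,q}(\lambda u)$ and projecting onto $\mathcal{N}^g_\ep$, exactly as was done to obtain (4.1) for $m^{g,+}_{a_{N,m},b_{N,m},\ep}$.

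Next, I would apply Remark \ref{remark1}: given $\delta > 0$ there exists $\ep_0 > 0$ such that, for every $\ep \in (0,\ep_0)$ and every $u\in H^2(M)$ with $u^+\not\equiv 0$,
$$
(1+\delta)^{-1}\, Y^{g,\ep,+}_{a_{N,m},b_{N,m},q}(u)\;\leq\; Y^{g,+}_{\ep,q}(u)\;\leq\; (1+\delta)\, Y^{g,\ep,+}_{a_{N,m},b_{N,m},q}(u),
$$
since the ratio $J^g_\ep/J^{g,\ep}_{a_{N,m},b_{N,m}}$ is bounded by $(1+\delta)^{\pm 1}$ and the $L^{q+1}$-denominator is identical. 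Taking the infimum in $u$ and raising to the power $(q+1)/(q-1)$ yields
$$
(1+\delta)^{-\frac{q+1}{q-1}}\, m^{g,+}_{a_{N,m},b_{N,m},\ep}\;\leq\;{\bf m}_\ep\;\leq\;(1+\delta)^{\frac{q+1}{q-1}}\, m^{g,+}_{a_{N,m},b_{N,m},\ep}.
$$

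Finally I would let $\ep \to 0$ and invoke Theorem \ref{theorem4}, which yields $m^{g,+}_{a_{N,m},b_{N,m},\ep}\to m_{a_{N,m},b_{N,m}}$. This traps $\liminf_{\ep\to 0}{\bf m}_\ep$ and $\limsup_{\ep\to 0}{\bf m}_\ep$ between $(1+\delta)^{\mp(q+1)/(q-1)}\,m_{a_{N,m},b_{N,m}}$, and since $\delta>0$ was arbitrary, letting $\delta\to 0$ concludes the proof. I do not expect any real obstacle: all the substantive work (the lower bound via partition of unity in normal coordinates, the upper bound via the concentrated test functions ${\bf i}(x)$) is already encoded in Theorem \ref{theorem4}; the only care needed is in writing down the variational characterization of ${\bf m}_\ep$, which is mechanical given the Nehari structure and the fact that $J^g_\ep(u)$ is quadratic in $u$.
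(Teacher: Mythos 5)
Your argument is correct and is precisely what the paper leaves implicit: the paper states Theorem~\ref{theorem5} ``follows from Remark~\ref{remark1} and Theorem~\ref{theorem4}'' with no further details, and your proof spells out exactly that implication via the scale-invariant Rayleigh-quotient characterization of ${\bf m}_\ep$ and the uniform $(1+\delta)^{\pm 1}$ sandwich between $J^g_\ep$ and $J^{g,\ep}_{a_{N,m},b_{N,m}}$. The only cosmetic caveat is that Remark~\ref{remark1} asserts the two-sided bound directly (so you need not argue separately that the lower-order terms admit a signed lower bound), and one should observe that for $\ep$ small $\mathcal{N}^g_\ep$ is contained in $\{u : u^+\not\equiv 0\}$ so the Rayleigh quotient is defined on all of the Nehari manifold; both points are immediate.
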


\section{Concentration for functions on $\Sigma_{\varepsilon, {\bf m}_{\ep} + \delta}$}\label{section6}

For  any $d >0$ we let $\Sigma_{\varepsilon, d} = \{u\in \mathcal{N}^g_{\varepsilon}  : J^{g,+}_{\varepsilon, q} (u) <  d \}$.
In this section we   show that for $\ep > 0$, $\delta >0$ small enough, the functions in $\Sigma_{\varepsilon , {\bf m}_{\ep} + \delta}$ concentrate on a small ball. Namely, we   prove:
\begin{theorem}\label{theorem5.1} 
Fix $\eta <1$ and $r>0$. There exist $\ep_0 , \delta _0 >0$ such that for any $\ep \in (0,\ep_0 )$, $\delta \in (0,\delta_0 )$ and any $u\in 
\Sigma_{\varepsilon , {\bf m}_{\ep} + \delta}$ there exists $x\in M$ such that
$$
\int_{B_g(x,r)}  ( u^+ )^{q+1} dv_g  \geq \eta \int_M  ( u^+ )^{q+1} dv_g .
$$
\end{theorem}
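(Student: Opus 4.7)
My plan is to argue by contradiction: suppose there exist $\eta<1$, $r>0$, and sequences $\ep_n\downarrow 0$, $\delta_n\downarrow 0$, $u_n\in\Sigma_{\ep_n,\mathbf{m}_{\ep_n}+\delta_n}$ with
\[
\sup_{x\in M}\int_{B_g(x,r)}(u_n^+)^{q+1}\,dv_g < \eta c_n,\qquad c_n:=\int_M(u_n^+)^{q+1}\,dv_g.
\]
Since $u_n\in\Sigma_{\ep_n,\mathbf{m}_{\ep_n}+\delta_n}$ the Nehari identity gives $J^{g,+}_{\ep_n,q}(u_n)=\tfrac{q-1}{2(q+1)}\ep_n^{-n}c_n$, and Theorem~\ref{theorem5} then forces $J^{g,+}_{\ep_n,q}(u_n)\to m:=m_{a_{N,m},b_{N,m}}$ together with $\ep_n^{-n}c_n\to\tfrac{2(q+1)}{q-1}m$.

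Next, I would localize $u_n$ with a partition of unity and apply, piece by piece, the Euclidean Sobolev lower bound~(\ref{3.7}) from Section~\ref{section4}. Fix a finite cover $\{B_g(x_k,r)\}_{k=1}^{L}$ of $M$ of bounded multiplicity $N_0$ and a subordinate smooth partition of unity $\{\varphi_k^2\}$ with $\sum_k\varphi_k^2=1$ and uniformly bounded $|\nabla\varphi_k|^2,(\Delta\varphi_k)^2$. For $r$ small each $B_g(x_k,2r)$ is a normal neighborhood in which $g$ is $C^1$-close to the Euclidean metric (Remark~\ref{remark4}). Setting $t_k:=\|\varphi_k u_n^+\|_{q+1}^{q+1}$ and transporting $\varphi_k u_n$ to $\re^n$ via $\exp_{x_k}$, the argument carried out in the proof of Theorem~\ref{theorem4} would yield
\[
(\ep_n^{-n}t_k)^{2/(q+1)} \le (1+o(1))\bigl(\mathbf{Y}^{\ep_n}_{a_{N,m},b_{N,m},q}\bigr)^{-1} J^{g,\ep_n}_{a_{N,m},b_{N,m}}(\varphi_k u_n),
\]
and, after summation and the cross-term bookkeeping already used in Theorem~\ref{theorem4},
\[
\sum_k(\ep_n^{-n}t_k)^{2/(q+1)} \le (1+o(1))(\mathbf{Y}^{\ep_n})^{-1} J^{g,\ep_n}_{a_{N,m},b_{N,m}}(u_n).
\]

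The crucial improvement extracted from $\eta<1$ would be a reverse power-mean estimate for the sum on the left. Setting $p:=2/(q+1)<1$, the bound $t_k\le\eta c_n$ gives $t_k^{p-1}\ge(\eta c_n)^{p-1}$, hence $\sum_k t_k^{p}\ge(\eta c_n)^{p-1}\sum_k t_k \ge N_0^{-(q-1)/2}\eta^{p-1}c_n^{p}$, where $\sum_k t_k\ge N_0^{-(q-1)/2}c_n$ follows from the pointwise H\"older inequality $\sum_k\varphi_k^{q+1}\ge N_0^{-(q-1)/2}$ combined with the bounded multiplicity of the cover. Using Remark~\ref{remark1} and the Nehari identity to replace $J^{g,\ep_n}_{a_{N,m},b_{N,m}}(u_n)$ by $\tfrac{1}{2}\ep_n^{-n}c_n(1+o(1))$, and substituting the explicit (and $\ep$-independent) value $\mathbf{Y}^{\ep_n}=2^{-2/(q+1)}\bigl(\tfrac{q+1}{q-1}m\bigr)^{(q-1)/(q+1)}$ from~(\ref{3.6}), the combined inequality collapses after routine algebra to
\[
N_0^{-(q-1)/2}\eta^{-(q-1)/(q+1)}\le 1+o(1),
\]
which is a contradiction provided $\eta<N_0^{-(q+1)/2}$. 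The hard part is extending the bound to arbitrary $\eta\in(0,1)$. To close this gap I would bootstrap: once partial concentration at level $N_0^{-(q+1)/2}$ is known on some ball $B_g(x_n,r)$, the rescalings $\tilde u_n(y):=u_n(\exp_{x_n}(\ep_n y))$ form a bounded sequence for the Euclidean functional $E_{a_{N,m},b_{N,m},q}$ whose energy tends to $m_{a_{N,m},b_{N,m}}$, and hence converge (along a subsequence) to the positive, radial, rapidly decaying Euclidean minimizer $U_{a_{N,m},b_{N,m}}$ of Section~\ref{section4}. Since that limit concentrates an arbitrarily large fraction of its $L^{q+1}$-mass in any sufficiently large Euclidean ball, the concentration level of $u_n$ inside $B_g(x_n,r)$ can then be upgraded to any prescribed $\eta<1$, finishing the proof.
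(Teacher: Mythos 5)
Your Step~1 is a genuinely different route from the paper and it is essentially correct as far as it goes. Where the paper cuts $u$ along an annulus into two disjointly supported pieces and invokes the strict super-additivity of the Nehari constraint (Proposition~6.3, Lemma~6.4, Corollary~\ref{coro65}), you instead use a subordinate partition of unity $\{\varphi_k^2\}$, transport each $\varphi_k u$ to $\re^n$, and play the Sobolev-type lower bound~(\ref{3.7}) against a reverse power-mean estimate exploiting $t_k\le\eta c_n$ and the multiplicity bound $\sum_k\varphi_k^{q+1}\ge N_0^{-(q-1)/2}$. The algebra closes exactly as you say, and the resulting inequality $\eta^{-(q-1)/(q+1)}N_0^{-(q-1)/2}\le 1+o(1)$ does give a contradiction when $\eta<N_0^{-(q+1)/2}$. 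But, as you recognize, $N_0$ grows as $r\downarrow 0$, so this step only yields concentration at a (possibly very low) level depending on the cover, not at an arbitrary $\eta<1$.

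The bootstrap in Step~2 is where the argument breaks down, and the gap is not a matter of routine detail. You assert that once partial concentration of the $L^{q+1}$-mass at some level $\gamma_0>0$ is known near $x_n$, the rescalings $\tilde u_n(y)=u_n(\exp_{x_n}(\ep_n y))$ converge along a subsequence to the Euclidean minimizer $U_{a_{N,m},b_{N,m}}$, and hence concentrate at any level $\eta<1$. But this is precisely the concentration--compactness dichotomy you would still need to exclude: a minimizing-energy sequence that carries a fixed fraction $\gamma_0<1$ of its mass near $x_n$ could, a priori, carry a second positive fraction near a far-away point $y_n$ with $d(x_n,y_n)\gg\ep_n$; in rescaled coordinates the second piece escapes to infinity and does not appear in the (local) weak limit, so there is no reason for the weak limit to be a minimizer, nor for $\tilde u_n$ to converge strongly in $L^{q+1}(\re^n)$. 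There is also a normalization issue: $\exp_{x_n}$ is only a diffeomorphism on a ball of bounded radius, so $\tilde u_n$ is not defined on all of $\re^n$ without a cut-off, and after cutting off the total energy and $L^{q+1}$-mass of $\tilde u_n$ are no longer the full quantities $J^{g,\ep_n}(u_n)$ and $\ep_n^{-n}c_n$; in particular "energy tends to $m$" is not automatic. Ruling out the second bubble is exactly what the paper's Lemma~\ref{lema67} and Corollary~\ref{coro65} accomplish by producing a disjointly supported splitting $\overline u=u_{j,1}+u_{j,2}$ with both pieces of positive normalized mass and then invoking the strict lower bound $E^{\ep,+}(\overline u)\ge\Psi(\delta)\,m^{g,+}_{\alpha,\beta,\ep}>m^{g,+}_{\alpha,\beta,\ep}$. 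Without a lemma of this kind, the claim that partial concentration upgrades itself to concentration at every level $\eta<1$ is circular: it presupposes the conclusion of the theorem.

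To repair your argument you would need to insert a quantitative non-splitting statement along the lines of: if a nonnegative fraction $\gamma_0$ of the mass sits in $B_g(x_n,r)$ and a nonnegative fraction $\gamma_1$ sits outside $B_g(x_n,2r)$ (say), then the Nehari energy exceeds $(1+c(\gamma_0,\gamma_1))\,m^{g,+}_{\alpha,\beta,\ep}$ for some $c>0$. That is the content of the paper's Proposition~6.3 together with Lemma~6.4; once you have it, the contradiction with $E\to m$ is immediate and the rescaling step becomes unnecessary. As written, however, the proposal does not contain this ingredient, and the proof is incomplete.
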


Let $\alpha = a_{N,m}$, $\beta ={b_N,m}$. 
Consider as in the previous section
$$J^{g,\ep}_{\alpha , \beta } (u) = \frac{\ep^{-n} }{2}  \int_{M} \big( \ep^4 ( \Delta_g u )^2  + \beta \ep^2   | \nabla  u |^2  +\alpha u^2\big) \,dv_g,$$
$$E^{g,\ep ,+}_{\alpha , \beta , q } (u) =\frac{ \ep^{-n} }{2} \int_{M} \big(\ep^4  ( \Delta_g u )^2  +
 \ep^2 \beta  | \nabla  u |^2  +\alpha u^2\big)\, dv_g - \frac{\ep^{-n} }{q+1} \int_{M}  (u^+ )^{q+1} dv_g,$$
$$\mathcal{N}_g^+ (\alpha , \beta , q, \ep)= \bigg\{ u \in H^2 (M ) \setminus \{ 0 \}  : 2 J^{g,\ep}_{\alpha , \beta} (u) = 
\ep^{-n}  \int_{M}     (u^+ )^{q+1} dv_g \bigg \},
$$
\begin{equation}\label{equivalencias}
m^{g,+}_{\alpha , \beta , \ep} \begin{array}[t]{ll}
\displaystyle = \inf_{u \in \mathcal{N}^+ (\alpha , \beta , q ,\ep )} E^{\ep , +}_{\alpha , \beta , q} (u) \medskip \\
\displaystyle=  \inf_{u \in \mathcal{N}^+ (\alpha , \beta , q ,\ep )} J^{g,\ep}_{\alpha , \beta } (u) \medskip\\
\displaystyle = \inf_{u \in \mathcal{N}^+ (\alpha , \beta , q ,\ep )} \frac{q-1}{2(q+1)} \ep^{-n} 
 \int_{M}     (u^+ )^{q+1} dv_g .\end{array}
\end{equation}

Let
$$\Sigma^{\alpha , \beta}_{\ep , d} = \{ u \in \mathcal{N}_g^+ (\alpha , \beta , q, \ep) :
E^{g,\ep ,+}_{\alpha , \beta , q } (u) \leq d \}.
$$

In the rest of the paper, we will use the following notation:  $B_g(x,r)$ is the ball in $M$ centered at $x$ with radius r with respect to the distance induced by the metric $g$. 

\medskip

Let $r_0=r_0 (M,g)$ be such that for any $0<r<r_0$ and any 
$x\in M$, $B_g (x,r_0)$ is strongly convex. 

\medskip

Taking into account Remark \ref{remark1} and Theorem \ref{theorem5} we see that 
Theorem \ref{theorem5.1} follows from

\begin{theorem}\label{theorem5.2} Fix $r < r_0 (M,g)$. For any $\eta <1$ there exist $\varepsilon_0  , \delta_0 >0$ such that for any $\varepsilon \in (0 , \varepsilon_0 )$, $\delta
\in (0,\delta_0 )$ and $u\in \Sigma^{\alpha , \beta}_{\varepsilon , m^{g,+}_{\alpha , \beta , \ep} + \delta}$ there exists  $x \in M$ such that
$\int_{B_g(x,r)}  (u^+ )^{q+1} \geq \eta \int_M ( u ^+ )^{q+1} $.
\end{theorem}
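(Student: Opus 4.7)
My approach is a contradiction argument combined with Lions' concentration-compactness applied to a rescaled sequence on $\mathbb{R}^n$. I assume the conclusion fails and extract sequences $\varepsilon_k\to 0^+$, $\delta_k\to 0^+$ and $u_k\in\Sigma^{\alpha,\beta}_{\varepsilon_k,\,m^{g,+}_{\alpha,\beta,\varepsilon_k}+\delta_k}$ with
$$\int_{B_g(x,r)}(u_k^+)^{q+1}\,dv_g\;<\;\eta\,\|u_k^+\|_{L^{q+1}(M)}^{q+1}\qquad\text{for every }x\in M. \qquad(\star)$$
By the Nehari identity and \eqref{equivalencias}, $\varepsilon_k^{-n}\|u_k^+\|_{L^{q+1}(M)}^{q+1}=2J^{g,\varepsilon_k}_{\alpha,\beta}(u_k)$, and Theorem \ref{theorem4} then gives $J^{g,\varepsilon_k}_{\alpha,\beta}(u_k)\to \tfrac{q+1}{q-1}m_{\alpha,\beta}$ and $\varepsilon_k^{-n}\|u_k^+\|_{L^{q+1}(M)}^{q+1}\to \tfrac{2(q+1)}{q-1}m_{\alpha,\beta}$, so the mass and energy of $(u_k)$ are pinned up to $o(1)$.

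I then pick concentration centers $x_k\in M$ maximizing $\rho_k(x):=\int_{B_g(x,r)}(u_k^+)^{q+1}dv_g$. A finite cover of $M$ by balls of radius $r/2$ of bounded multiplicity provides a constant $c>0$ with $\rho_k(x_k)\geq c\,\|u_k^+\|_{L^{q+1}(M)}^{q+1}$, so the ball centered at $x_k$ already captures a fixed positive fraction of the total mass. Next I rescale via the exponential map by
$$v_k(y):=u_k\!\big(\exp_{x_k}(\varepsilon_k y)\big)\qquad \text{for }y\in B\!\big(0,\tfrac{\mathrm{inj}(M,g)}{2\varepsilon_k}\big),$$
extended by zero to $\mathbb{R}^n$. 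By Remark \ref{remark4}, in normal coordinates $g$ approaches the Euclidean metric on the shrinking chart scale, so a direct change of variable shows that $(v_k)$ is bounded in $H^2(\mathbb{R}^n)$ with $\|v_k^+\|_{L^{q+1}(\mathbb{R}^n)}^{q+1}\to\tfrac{2(q+1)}{q-1}m_{\alpha,\beta}$ and $J_{\alpha,\beta}(v_k)\to \tfrac{q+1}{q-1}m_{\alpha,\beta}$. Hence $(v_k)$ is an asymptotically minimizing sequence on the Euclidean Nehari manifold $\mathcal{N}^+(\alpha,\beta,q)$.

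Now I apply Lions' concentration-compactness to $(v_k)$. Vanishing is ruled out by the positive mass limit. The dichotomy alternative is ruled out by strict subadditivity of $m_{\alpha,\beta}$: since $m_{\alpha,\beta}$ is attained by the positive radial minimizer $U_{\alpha,\beta}$ (Bonheure-Nascimento theorem), the mass-constrained infimum $I(\mu):=\inf\{J_{\alpha,\beta}(v):\|v^+\|_{q+1}^{q+1}=\mu\}$ equals $\tfrac{1}{2}\mathbf{Y}_{\alpha,\beta,q}\,\mu^{2/(q+1)}$ and $\mu\mapsto\mu^{2/(q+1)}$ is strictly concave, so any nontrivial splitting $\mu=\mu_1+\mu_2$ with $\mu_1,\mu_2>0$ yields $I(\mu_1)+I(\mu_2)>I(\mu)$ and contradicts the energy budget. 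Thus compactness must hold: after a subsequence there exist translations $\tau_k\in\mathbb{R}^n$, kept in the chart by the maximality of $x_k$ and the uniform lower bound $\rho_k(x_k)\geq c\,\|u_k^+\|_{q+1}^{q+1}$, such that $v_k(\,\cdot+\tau_k)\to U_{\alpha,\beta}$ strongly in $L^{q+1}(\mathbb{R}^n)$ (up to a rotation).

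Finally I transfer back to $M$. Fix $R_0$ so that $\int_{B(0,R_0)}U_{\alpha,\beta}^{q+1}\,dy\geq \eta'\int_{\mathbb{R}^n}U_{\alpha,\beta}^{q+1}\,dy$ for some $\eta'\in(\eta,1)$. Setting $y_k:=\exp_{x_k}(\varepsilon_k\tau_k)\in M$ and reversing the change of variables, strong $L^{q+1}$-convergence yields
$$\int_{B_g(y_k,\varepsilon_k R_0)}(u_k^+)^{q+1}\,dv_g\;\geq\;\eta\,\|u_k^+\|_{L^{q+1}(M)}^{q+1}$$
for all sufficiently large $k$; since $\varepsilon_k R_0<r$ eventually, this contradicts $(\star)$. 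The hardest step will be the concentration-compactness analysis: I have to control carefully the Euclidean-versus-Riemannian discrepancy of the $H^2$ and $L^{q+1}$ norms under the $\varepsilon_k$-rescaling (handled via Remark \ref{remark4}), execute the Brezis-Lieb/Lions splitting on $(v_k)$ to extract a bubble, and ensure that Lions' translations $\tau_k$ stay inside the moving chart, which is arranged by the initial choice of $x_k$ as a ball-mass maximizer.
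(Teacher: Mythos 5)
Your proposal follows a genuinely different route from the paper's. The paper does not pass to $\mathbb{R}^n$ and does not invoke the Lions machinery: it works entirely on $M$ and constructs the "bad'' splitting explicitly. Specifically, it first proves a non-vanishing estimate (the unnumbered Proposition just before the theorem) giving a uniform $\gamma>0$ with $\varepsilon^{-n}\int_{B_g(x,2\varepsilon)}(u^+)^{q+1}dv_g\geq\gamma$ for some $x$; then Lemma~\ref{lema67} uses annular cut-offs to produce, from any near-minimizer $u$, a new near-minimizer $\overline{u}=u_1+u_2$ with disjoint supports where $u_1$ lives inside $B_g(x,r)$ and $u_2$ outside; and finally Corollary~\ref{coro65} (a quantitative form of strict subadditivity, via the elementary function $\Psi$) shows that such a disjoint split with both parts of non-negligible mass would force $E^{\varepsilon,+}_{\alpha,\beta,q}(\overline{u})\geq\Psi(\delta)\,m^{g,+}_{\alpha,\beta,\varepsilon}>m^{g,+}_{\alpha,\beta,\varepsilon}+2\delta_j$, a contradiction. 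This is self-contained, avoids any rescaling to $\mathbb{R}^n$, and only needs Sobolev estimates on small balls.

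Your concentration-compactness sketch rests on the same structural idea (dichotomy ruled out by strict subadditivity of the constrained infimum), but as written it has several gaps. (1) "Extended by zero to $\mathbb{R}^n$'' is not legitimate: $u_k\circ\exp_{x_k}$ need not vanish near the boundary of the normal chart, so $v_k$ as defined is not in $H^2(\mathbb{R}^n)$; you would have to insert a cut-off, and the resulting error terms in the bi-Laplacian must be estimated (this is exactly the work done in the paper's Lemma~\ref{lema67}). (2) Your lower bound $\rho_k(x_k)\geq c\|u_k^+\|_{q+1}^{q+1}$ is at the \emph{macroscopic} scale $r$; after rescaling by $\varepsilon_k$ it concerns the ball $B(0,r/\varepsilon_k)$ whose radius blows up, so it does not by itself pin the Lions translations $\tau_k$ to a bounded set. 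The claim that $\tau_k$ is "kept in the chart by the maximality of $x_k$'' would need an additional recentering argument (replace $x_k$ by $\exp_{x_k}(\varepsilon_k\tau_k)$ and check you stay inside the chart and that the metric is still close to Euclidean there); as stated it is a genuine gap. (3) The Brezis--Lieb/Lions splitting for the quadratic form $(\Delta v)^2+\beta|\nabla v|^2+\alpha v^2$ (fourth order, positive zeroth-order term) needs to be executed, not just asserted; the paper bypasses this entirely by comparing with the Euclidean functional through Remark~\ref{remark4} and~\eqref{3.7}. With those three points repaired your route could be made to work, but the paper's direct cut-off argument is shorter and self-contained.
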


For any function $u \in H^2 (M)$ such that $u^+ \neq 0$  we let as before
$$
\lambda^{g,\ep ,+} (u) = { \left(   \frac{\int_M\big( \ep^4  ( \Delta_g u )^2  +
 \ep^2 \beta  | \nabla  u |^2  +\alpha u^2\big)\, dv_g }{\int_M  (u^+ )^{q+1} dv_g } \right) }^{\frac{1}{{q-1}}} ,
$$
so that $\lambda^{g,\ep ,+} (u)\, u \in \mathcal{N}^+_g (\alpha , \beta , q, \ep)$

\begin{proposition} Let $u=u_1 + u_2  \in \mathcal{N}^+_g (\alpha , \beta , q, \ep)$, where $u_1$ and $u_2$ 
have disjoint supports.
For $i=1,2$ let $\ep^{-n} \int_M  ( u_i ^+  )^{q+1} dv_g =a_i   >0$  and 
$\ep^{-n} \int_M  \big(  \ep^4  ( \Delta_g u )^2  +
 \ep^2 \beta  | \nabla  u |^2  +\alpha u^2 \big)\, dv_g  =
 b_i >0$.
Note that then $a_1 + a_2 = b_1 + b_2$ (since  $u \in  \mathcal{N}_g^+ (\alpha , \beta , q, \ep)$) .
Then
$$ 
E^{\ep , +}_{\alpha , \beta , q} (u) \geq m^{g,+}_{\alpha , \beta , \ep} \left(  {\left( \frac{a_1}{b_1} \right) }^{\frac{2}{q-1}} + {\left(  \frac{a_2 }{b_2} \right) }^{\frac{2}{q-1}}  \right) .
$$
\end{proposition}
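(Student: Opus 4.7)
The plan is to combine three elementary facts: the energy splits as a sum because the supports are disjoint, the Nehari identity forces $a_1+a_2=b_1+b_2$, and each $u_i$ can be rescaled to a point of $\mathcal{N}_g^+(\alpha,\beta,q,\ep)$ whose energy is controlled from below by $m^{g,+}_{\alpha,\beta,\ep}$.

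First I would verify that disjoint supports give additivity of all the quadratic forms appearing in $J^{g,\ep}_{\alpha,\beta}$ and in the $L^{q+1}$ term. For the pointwise products $u_1 u_2$ and $\nabla u_1\cdot\nabla u_2$ this is immediate a.e. For the cross term $\Delta_g u_1\cdot\Delta_g u_2$, the hypothesis $\operatorname{supp}u_1\cap\operatorname{supp}u_2=\emptyset$ means each $u_i$ vanishes in a neighborhood of $\operatorname{supp}u_{3-i}$, so $\Delta_g u_i$ also vanishes there in the $L^2$ sense and the product integrates to $0$. Consequently $J^{g,\ep}_{\alpha,\beta}(u)=\tfrac12(b_1+b_2)$ and $\ep^{-n}\int_M(u^+)^{q+1}\,dv_g=a_1+a_2$, so the condition $u\in\mathcal{N}_g^+(\alpha,\beta,q,\ep)$ yields $b_1+b_2=a_1+a_2$.

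Next I would compute the energy directly:
\begin{equation*}
E^{\ep,+}_{\alpha,\beta,q}(u)=\frac{b_1+b_2}{2}-\frac{a_1+a_2}{q+1}=\frac{q-1}{2(q+1)}(a_1+a_2),
\end{equation*}
where I used the Nehari identity in the last step. So the target inequality reduces to showing
\begin{equation*}
\frac{q-1}{2(q+1)}(b_1+b_2)\ \geq\ m^{g,+}_{\alpha,\beta,\ep}\left(\left(\frac{a_1}{b_1}\right)^{\!\frac{2}{q-1}}+\left(\frac{a_2}{b_2}\right)^{\!\frac{2}{q-1}}\right),
\end{equation*}
and it suffices to prove the $i$-th summand bound individually.

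To get the $i$-th bound, I would rescale: from the formula for $\lambda^{g,\ep,+}$ recalled above, $\lambda^{g,\ep,+}(u_i)=(b_i/a_i)^{1/(q-1)}$, and $\lambda^{g,\ep,+}(u_i)\,u_i\in\mathcal{N}_g^+(\alpha,\beta,q,\ep)$. Applying the characterization (\ref{equivalencias}) of $m^{g,+}_{\alpha,\beta,\ep}$ to this rescaled function gives
\begin{equation*}
m^{g,+}_{\alpha,\beta,\ep}\ \leq\ \frac{q-1}{2(q+1)}\,\ep^{-n}\lambda^{g,\ep,+}(u_i)^{q+1}\int_M(u_i^+)^{q+1}\,dv_g=\frac{q-1}{2(q+1)}\,b_i\left(\frac{b_i}{a_i}\right)^{\!\frac{2}{q-1}},
\end{equation*}
equivalently $m^{g,+}_{\alpha,\beta,\ep}(a_i/b_i)^{2/(q-1)}\leq\tfrac{q-1}{2(q+1)}b_i$. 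Summing over $i=1,2$ and combining with the energy computation above concludes the proof. The only delicate point is the additivity of the biharmonic term under disjoint supports, and it is resolved once one observes that each $u_i$ vanishes identically on an open set containing the support of the other.
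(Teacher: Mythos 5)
Your proof is correct and takes essentially the same route as the paper: compute $\lambda^{g,\ep,+}(u_i)=(b_i/a_i)^{1/(q-1)}$, use the infimum characterization $m^{g,+}_{\alpha,\beta,\ep}\le E^{\ep,+}_{\alpha,\beta,q}(\lambda^{g,\ep,+}(u_i)u_i)=\tfrac{q-1}{2(q+1)}b_i(b_i/a_i)^{2/(q-1)}$, and sum over $i$. The only difference is that you make explicit the additivity of the biharmonic/gradient/$L^2$/$L^{q+1}$ terms under the disjoint-supports hypothesis, which the paper leaves implicit; that is a sound and worthwhile clarification (and also implicitly fixes the evident typo $u\mapsto u_i$ in the statement's definition of $b_i$).
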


\begin{proof} Note that $\lambda^{g, \ep ,+}  (u_i )= \left( \frac{b_i }{a_i} \right)^{\frac{1}{q-1}}.$ 
Then 
$$
m^{g,+}_{\alpha , \beta , \ep} \leq  E^{\ep , +}_{\alpha , \beta , q} \left(  \left( \frac{b_i }{a_i} 
\right)^{\frac{1}{q-1}}
u_i \right) = 
\left( \frac{b_i}{a_i} \right)^{\frac{2}{q-1}} \frac{q-1}{2(q+1)}   \  b_i ,
$$
or equivalently, 
$$
b_i \geq  \frac{2(q+1)}{q-1} \left( \frac{a_i}{b_i} \right)^{\frac{2}{q-1}} m^{g,+}_{\alpha , \beta , \ep}.
$$
Then
$$
E^{\ep , +}_{\alpha , \beta , q} (u) = \frac{q-1}{2(q+1)}  (b_1 + b_2 ) \geq   
m^{g,+}_{\alpha , \beta , \ep} \left(
{\left( \frac{a_1}{b_1} \right)}^{\frac{2}{q-1}}    +   {\left( \frac{a_2}{b_2} \right)}^{\frac{2}{q-1}} \right).
$$
\end{proof}

Now consider the following elementary result:
\begin{lemma} Let $r>0$ and $x_1 , x_2 , y_1 ,y_2$ be positive numbers such that $x_1 + x_2 = y_1 + y_2 =r$. Then
$$ 
\varphi (x_1 , x_2 , y_1 , y_2 ) = {\left(  \frac{x_1}{y_1} \right) }^{\frac{2}{q-1}} + {\left( \frac{x_2}{y_2}    \right)}^{\frac{2}{q-1}} >1  .
$$
For any $\delta \in (0,1)$ consider the set 
$$
A_{\delta} :=\left \{ (x_1 , x_2 , y_1 , y_2 ) \in (\re_{>0} )^4  : x_1 + x_2 = y_1 + y_2 =r, \  x_1 , x_2 \geq \delta r \right\}
$$
and let
$\Psi (\delta ) := \inf_{A_{\delta}} \varphi .$ Then  $\Psi : (0,1) \rightarrow (1 ,\infty )$ is a continuous function, and $\Psi$ is
independent of $r$.
\end{lemma}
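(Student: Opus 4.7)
The plan is to reparametrize by $u = x_1/y_1$, $v = x_2/y_2$, to use the scale invariance of $\varphi$ to normalize $r = 1$, and then read off the three assertions from an elementary pointwise argument combined with a compactness step. First, $\varphi$ is invariant under the common rescaling $(x_1, x_2, y_1, y_2) \mapsto (\lambda x_1, \lambda x_2, \lambda y_1, \lambda y_2)$, and this rescaling bijects $A_\delta$ with total mass $r$ onto $A_\delta$ with total mass $\lambda r$; choosing $\lambda = 1/r$ proves that $\Psi(\delta)$ does not depend on $r$, so I assume $r = 1$ throughout.

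For the pointwise strict inequality $\varphi > 1$, I rewrite the constraint $x_1 + x_2 = y_1 + y_2 = 1$ as $(u - 1)y_1 = (1 - v)y_2$, which forces $u - 1$ and $1 - v$ to have the same sign (possibly both zero). In every case at least one of $u, v$ lies in $[1, \infty)$; say $u \geq 1$. Then $u^s \geq 1$ for the positive exponent $s := 2/(q-1)$, while $v^s > 0$ since $v = x_2/y_2 > 0$, so $\varphi = u^s + v^s > 1$ strictly; the case $v \geq 1$ is symmetric. Note that this step only uses strict positivity, not the condition $x_i \geq \delta$.

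Next, parametrize $A_\delta$ by $(x_1, y_1) \in [\delta, 1-\delta] \times (0, 1)$, with $x_2 = 1 - x_1$ and $y_2 = 1 - y_1$. Because $x_1, x_2 \geq \delta > 0$, one has $\varphi \to \infty$ as $y_1 \to 0^+$ or $y_1 \to 1^-$; hence the infimum is realized on a compact subset of the form $[\delta, 1-\delta] \times [\eta(\delta), 1 - \eta(\delta)]$ and is therefore attained. Combined with the pointwise bound $\varphi > 1$, this yields $\Psi(\delta) \in (1, \infty)$. For continuity, $\Psi$ is nondecreasing because $\delta \mapsto A_\delta$ is shrinking, so both one-sided limits exist. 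Given $\delta_n \to \delta$ and minimizers $(x_1^n, y_1^n) \in A_{\delta_n}$, the uniform blow-up of $\varphi$ at $y_1 \in \{0, 1\}$ traps the minimizers in a common compact subset of $(0,1)^2$; any subsequential limit $(x_1^\ast, y_1^\ast)$ satisfies $x_1^\ast, x_2^\ast \geq \delta$, hence lies in $A_\delta$, so $\Psi(\delta) \leq \varphi(x_1^\ast, y_1^\ast) = \lim_n \Psi(\delta_n)$; the matching inequality comes from monotonicity if $\delta_n \uparrow \delta$, and from plugging an interior minimizer of $A_\delta$ into $A_{\delta_n}$ (for $n$ large) if $\delta_n \downarrow \delta$.

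The main technical point I expect is this last step, i.e., establishing uniform compactness of the minimizing sets as $\delta$ varies so that continuity of $\Psi$ is not spoiled by minimizers escaping to the boundary $y_1 \in \{0, 1\}$; everything else reduces to the observation about the sign of $u-1$ and $1-v$ together with scale invariance.
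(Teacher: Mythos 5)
The paper offers this lemma as ``elementary'' and gives no proof, so there is no canonical argument to compare against; your proof supplies a complete one and the main structure is sound. The reparametrization $u=x_1/y_1$, $v=x_2/y_2$, the observation that the constraint forces $(u-1)y_1=(1-v)y_2$ so $u-1$ and $1-v$ share a sign, the resulting pointwise bound $\varphi=u^s+v^s>1$, the $\lambda$-scaling argument for $r$-independence, and the blow-up of $\varphi$ as $y_1\to\{0,1\}$ (given $x_1,x_2\ge\delta r$) to conclude $\Psi(\delta)\in(1,\infty)$ are all correct and are exactly the natural route.

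One step needs tightening. In the continuity argument for $\delta_n\downarrow\delta$, you write that the matching bound $\lim_n\Psi(\delta_n)\le\Psi(\delta)$ follows ``from plugging an interior minimizer of $A_\delta$ into $A_{\delta_n}$.'' This tacitly assumes a minimizer of $\varphi$ on $A_\delta$ lies in the relative interior $\{x_1,x_2>\delta r\}$, but in fact one should expect the minimizer to sit on the boundary $x_i=\delta r$ (heuristically, the infimum wants $u$ near $1$ and $v$ small, which pushes $x_2$ as small as allowed). The fix is standard: since $\varphi$ is continuous and $\{x_1,x_2>\delta r\}$ is dense in $A_\delta$, one has $\inf_{A_\delta}\varphi=\inf_{\{x_1,x_2>\delta r\}}\varphi$; now pick an interior \emph{almost}-minimizer $w$ with $\varphi(w)<\Psi(\delta)+\varepsilon$, note $w\in A_{\delta_n}$ for $n$ large, and conclude $\limsup_n\Psi(\delta_n)\le\Psi(\delta)+\varepsilon$. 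With that replacement the proof is complete. (A side remark, not a defect of your argument: as stated, $A_\delta=\emptyset$ for $\delta>1/2$, so $\Psi$ is really finite-valued only on $(0,1/2]$; the paper's ``$\Psi:(0,1)\to(1,\infty)$'' is an overstatement that does not affect the later applications, which use only small $\delta$.)
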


We can deduce
\begin{corollary}\label{coro65} Let $u = u_1 + u_2 \in  \mathcal{N}_g^+ (\alpha , \beta , q, \ep)$, where the functions $u_1$, $u_2$ 
have disjoint supports. Moreover assume that $\ep^{-n} \int_M  ( u_i ^+  )^{q+1} dv_g > \delta$
for some $\delta >0$, $i=1,2$.
Then $$ E^{\ep , +}_{\alpha , \beta , q} (u)  \geq \Psi (\delta )  \  m^{g,+}_{\alpha , \beta , \ep} .$$
\end{corollary}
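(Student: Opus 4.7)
The plan is a direct assembly of the preceding Proposition and Lemma, with essentially no new input. Given $u=u_1+u_2\in \mathcal{N}_g^+(\alpha,\beta,q,\ep)$ with $u_1,u_2$ having disjoint supports, I would first set
\[
a_i:=\ep^{-n}\!\int_M (u_i^+)^{q+1}\,dv_g, \qquad b_i:=\ep^{-n}\!\int_M\!\bigl(\ep^4(\Delta_g u_i)^2+\ep^2\beta|\nabla u_i|^2+\alpha u_i^2\bigr)\,dv_g,
\]
as in the Proposition. The disjointness of supports makes both the $L^{q+1}$ and the $J^{g,\ep}_{\alpha,\beta}$ functionals additive on $u=u_1+u_2$, and then the Nehari condition $u\in\mathcal{N}_g^+(\alpha,\beta,q,\ep)$ translates precisely into the identity $a_1+a_2=b_1+b_2$. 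Denote this common value by $r$.

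Next, I would invoke the preceding Proposition, which yields the key inequality
\[
E^{\ep,+}_{\alpha,\beta,q}(u)\;\geq\; m^{g,+}_{\alpha,\beta,\ep}\,\varphi(a_1,a_2,b_1,b_2),
\]
where $\varphi(x_1,x_2,y_1,y_2)=(x_1/y_1)^{2/(q-1)}+(x_2/y_2)^{2/(q-1)}$ is exactly the function appearing in the Lemma. At this stage the problem reduces to lower-bounding the right-hand side by $\Psi(\delta)$.

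Finally, I would check that the tuple $(a_1,a_2,b_1,b_2)$ lies in the admissible set $A_\delta$ of the Lemma. By construction $a_1+a_2=b_1+b_2=r$ and the hypothesis gives $a_i>\delta$. Since $\varphi$ is invariant under the simultaneous rescaling $(a_i,b_i)\mapsto(ta_i,tb_i)$, only the ratios $a_i/r$ matter, and the infimum $\Psi(\delta)$ in the Lemma is correspondingly independent of $r$. Applying the Lemma then yields $\varphi(a_1,a_2,b_1,b_2)\geq \Psi(\delta)$, and combining with the previous display gives the desired bound $E^{\ep,+}_{\alpha,\beta,q}(u)\geq \Psi(\delta)\,m^{g,+}_{\alpha,\beta,\ep}$.

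The only subtlety — and the point I would be most careful to state precisely — is the matching between the absolute bound $a_i>\delta$ in the Corollary and the relative bound $x_i\geq \delta r$ defining $A_\delta$ in the Lemma: because $\varphi$ is homogeneous of degree zero in the pair $(x,y)$ jointly, and because $r=a_1+a_2$ is an intrinsic normalizing scale, this matching is really about reading $\delta$ as a fraction of the total mass. Beyond this bookkeeping there is no genuine difficulty; the Corollary is just the scalar inequality of the Lemma plugged into the energy inequality of the Proposition.
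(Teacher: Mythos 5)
Your proof is exactly the paper's intended argument: the Corollary has no written proof in the paper and is plainly meant to follow by feeding the conclusion of the preceding Proposition into the preceding Lemma, which is what you do. The one point you flag — that the Corollary's hypothesis $a_i > \delta$ is an absolute bound while the Lemma's set $A_\delta$ requires the fractional bound $a_i \ge \delta\, r$ with $r = a_1 + a_2$ — is a genuine imprecision in the paper's statement rather than a gap in your reasoning, and your resolution (read $\delta$ as a fraction of the common total $r$, using the degree-zero homogeneity of $\varphi$) is the right one. It is worth noting that in the only place the Corollary is used (the proof of Theorem~\ref{theorem5.2}), the relevant $r$ is uniformly bounded along the sequence because $E^{\ep_j,+}_{\alpha,\beta,q}(\overline{u_j}) < m^{g,+}_{\alpha,\beta,\ep_j} + 2\delta_j$ and $m^{g,+}_{\alpha,\beta,\ep_j} \to m_{\alpha,\beta}$, so the absolute lower bounds on $a_1$ and $a_2$ obtained there do convert into a uniform fractional bound, and the conclusion $E \ge \Psi(\delta')\, m^{g,+}$ holds for some fixed $\delta' > 0$ as the paper asserts.
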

 
Fix $r < r_0 (M,g)$.

Fix a positive integer $l$ and another positive integer $i$, $1<i < l$. Consider   bump functions $\varphi_{i,l}$ which is equal to
1 in $\Big[0,\frac{(i-1) }{l} r \Big]$ and 0 in $\Big[\frac{i}{l} r , \infty\Big)$, and  $\sigma_{i,l}$ which is 1 in $\Big[\frac{(i+1)}{l} r , \infty\Big )$ and
0 in $\Big[0 , \frac{i}{l} r \Big]$.

\begin{remark} We can assume that $ |(  \varphi_{i,l}  ) '| ,  | ( \sigma_{i,l} ) ' | \leq
3l/r$, $|  ( \varphi_{i,l}  ) '' | , | ( \sigma_{i,l} ) '' | \leq 9 (l/r)^2$.
\end{remark}

For any  $x\in M$, consider the   function $\varphi_{i,l}^{r,x} :  M \rightarrow [0,1]$ which
is 0 away from $B_g(x,r) \subset M$ and in $ B_g (x,r)$ (identified with the $r$-ball in $\re^n$) is given by
$\varphi_{i,l}  ( d(0,-))$. Define  $\sigma_{i,l}^{r,x}$ in a similar way. Now for any function $u \in \mathcal{N}^g_{\varepsilon}$ define
$u_{1,i,l}^{r,x} = \varphi_{i,l}^{r,x} \ u$ and $u_{2,i,l}^{r,x} = \sigma_{i,l}^{r,x} \ u$.
Note that since $u^+ \neq 0$  we have that $ ( u_{1,i,l}^{r,x}  + u_{2,i,l}^{r,x} )^+ \neq 0$.
Note also that  these two functions have disjoint supports.

Let
$$
\overline{u}   := \overline{u}_{i,l}^{r,x} =     \lambda^{q,\ep ,+} (  u_{1,i,l}^{r,x}  + u_{2,i,l}^{r,x}   ) \,(  u_{1,i,l}^{r,x}  + u_{2,i,l}^{r,x}   ) .
$$

\begin{lemma}\label{lema67} For the closed Riemannian manifold $(M,g)$ fix $0< r <  r_0 (M,g) $ and  $\delta_0 >0$. There exists $\varepsilon_0 >0$ and $l \in \mathbb{ N}$
 such that for any $\varepsilon \in (0, \varepsilon_0 )$ we have that
 for any $u \in \Sigma^{\alpha , \beta}_{\varepsilon , m^{g,+}_{\alpha , \beta , \ep}  + \delta_0  }$ and any $x \in M$ there exist $i$, $1<i<l$ such that $ \overline{u}
\in \Sigma^{\alpha , \beta}_{\varepsilon , m^{g,+}_{\alpha , \beta , \ep} +  2  \delta_0 } $
\end{lemma}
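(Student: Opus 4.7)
The plan is to combine a pigeonhole argument over the $l-2$ admissible cutoff indices with standard Leibniz estimates showing that truncating $u$ by $\chi_i := \varphi_{i,l}^{r,x}+\sigma_{i,l}^{r,x}$ perturbs $J^{g,\ep}_{\alpha,\beta}(u)$ and $\ep^{-n}\int_M(u^+)^{q+1}dv_g$ only slightly, then to transfer this to the renormalised function $\overline u$ via control of the Nehari multiplier.

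First I would record uniform bounds. Since $u\in\mathcal N_g^+(\alpha,\beta,q,\ep)$, the identity
\[
E^{g,\ep,+}_{\alpha,\beta,q}(u)=\tfrac{q-1}{q+1}J^{g,\ep}_{\alpha,\beta}(u)=\tfrac{q-1}{2(q+1)}\,\ep^{-n}\!\int_M(u^+)^{q+1}dv_g,
\]
together with $E^{g,\ep,+}_{\alpha,\beta,q}(u)\leq m^{g,+}_{\alpha,\beta,\ep}+\delta_0$ and Theorem \ref{theorem4}, produces a constant $K$ (depending only on $\delta_0$) with $J^{g,\ep}_{\alpha,\beta}(u),\ \ep^{-n}\!\int_M(u^+)^{q+1}dv_g\leq K$ for all $\ep$ small. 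Partition $B_g(x,r)$ into thin annuli $T_j=\{p:(j-1)r/l\leq d_g(p,x)\leq jr/l\}$, $j=1,\dots,l$. For any admissible $i\in\{2,\dots,l-1\}$, $\chi_i\equiv 1$ outside $T_i\cup T_{i+1}$, $0\leq\chi_i\leq 1$, and $v_i:=\chi_i u=u_{1,i,l}^{r,x}+u_{2,i,l}^{r,x}$. Since each $T_j$ belongs to at most two of the pairs $T_i\cup T_{i+1}$, summing the $J$-integrand and $(u^+)^{q+1}$ over these pairs gives bounds of the form $\sum_i L_i(u)\leq 4K$ and $\sum_i M_i(u)\leq 2K$, where $L_i(u),M_i(u)$ denote the partial integrals over $T_i\cup T_{i+1}$. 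Pigeonhole then yields $i_\star=i_\star(u,x)$ with $L_{i_\star}(u)+M_{i_\star}(u)\leq 8K/(l-2)$.

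Next I would compare $J^{g,\ep}_{\alpha,\beta}(v_{i_\star})$ with $J^{g,\ep}_{\alpha,\beta}(u)$. Outside $T_{i_\star}\cup T_{i_\star+1}$ the two integrands coincide; inside, the Leibniz expansion $\Delta_g(\chi u)=\chi\Delta_g u+2\langle\nabla\chi,\nabla u\rangle+u\Delta_g\chi$ and the analogous identity for $\nabla(\chi u)$, combined with the given bounds $|\nabla\chi_i|^2\leq 9(l/r)^2$ and $(\Delta_g\chi_i)^2\leq C(g)(l/r)^4$, yield for any $\eta>0$
\[
J^{g,\ep}_{\alpha,\beta}(v_{i_\star})\leq J^{g,\ep}_{\alpha,\beta}(u)+\eta L_{i_\star}(u)+C_\eta K\bigl(\ep^2(l/r)^2+\ep^4(l/r)^4\bigr).
\]
Similarly $\ep^{-n}\int_M(v_{i_\star}^+)^{q+1}dv_g\geq\ep^{-n}\int_M(u^+)^{q+1}dv_g-M_{i_\star}(u)$. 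Using $u\in\mathcal N_g^+$ to identify $\tfrac{1}{2}\ep^{-n}\int_M(u^+)^{q+1}dv_g$ with $J^{g,\ep}_{\alpha,\beta}(u)$, for $l$ large and $\ep$ small both numerator and denominator of
\[
(\lambda^{g,\ep,+}(v_{i_\star}))^{q-1}=\frac{J^{g,\ep}_{\alpha,\beta}(v_{i_\star})}{\tfrac{1}{2}\ep^{-n}\int_M(v_{i_\star}^+)^{q+1}dv_g}
\]
stay close to $J^{g,\ep}_{\alpha,\beta}(u)$, so $(\lambda^{g,\ep,+}(v_{i_\star}))^2\leq 1+o(1)$ as $l\to\infty$ and $\ep\to 0$.

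Assembling these estimates,
\[
E^{g,\ep,+}_{\alpha,\beta,q}(\overline u_{i_\star,l}^{r,x})=\tfrac{q-1}{q+1}(\lambda^{g,\ep,+}(v_{i_\star}))^2 J^{g,\ep}_{\alpha,\beta}(v_{i_\star})\leq E^{g,\ep,+}_{\alpha,\beta,q}(u)+o(1),
\]
so I would fix $\eta$ small, choose $l=l(\delta_0,K)$ so that the $\eta$-loss and the $1/l$-loss together stay below $\delta_0/2$, and finally pick $\ep_0=\ep_0(l)$ so that the remaining $\ep$-dependent correction lies below $\delta_0/2$, giving $E^{g,\ep,+}_{\alpha,\beta,q}(\overline u_{i_\star,l}^{r,x})\leq m^{g,+}_{\alpha,\beta,\ep}+2\delta_0$. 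The main obstacle is the term $u^2(\Delta_g\chi)^2$ in the Leibniz expansion: it scales like $(l/r)^4$ and prevents us from choosing $\ep_0$ before $l$. The rescue is the $\ep^4$ prefactor of the bilaplacian in $J^{g,\ep}_{\alpha,\beta}$, which makes $\ep^4(l/r)^4\to 0$ once $l$ is fixed and $\ep_0$ is taken small enough.
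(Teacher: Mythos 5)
Your argument is correct and follows essentially the same strategy as the paper: pigeonhole over concentric annuli of radial width $O(r/l)$ to locate a cutoff index $i_\star$ where both the $J$-integrand and the $(u^+)^{q+1}$-mass are small, estimate the truncation error via the Leibniz rule (absorbing the $(l/r)^2$ and $(l/r)^4$ derivative losses with the $\ep^2$ and $\ep^4$ prefactors), and control the Nehari multiplier $\lambda^{g,\ep,+}(v_{i_\star})$ to place $\overline u$ back in the sublevel set. The only difference is bookkeeping: the paper chooses $\eta$ close to $1$, uses a three-fold relative pigeonhole giving $(1-\eta)$ fractions, and tracks the explicit multiplicative factor $\bigl((9-8\eta)/\eta\bigr)^{2/(q-1)}(9-8\eta)$, whereas you use absolute bounds $O(K/l)$ and derive an additive $o(1)$ correction — the two are equivalent.
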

\begin{proof}
Assume from the beginning that $\varepsilon$ is small enough so that $m^{g,+}_{\alpha , \beta , \ep}$ is close to $m_{\alpha , \beta}$ (using Theorem \ref{theorem4}).
Pick $\eta \in (0,1)$, close to 1,  such that (for any such $\ep$)

\begin{equation}\label{5.1}
(m^{g,+}_{\alpha , \beta , \ep}  + \delta_0  )  {\left( \frac{9 - 8 \eta }{\eta} \right) }^{\frac{2}{q-1}} (9 - 8 \eta ) < m^{g,+}_{\alpha , \beta , \ep}  +  2 \delta_0.
\end{equation}

We   assume that  the positive integer $l$ is  large enough so that $8/l < 1-\eta .$

Consider  any $\varepsilon$ small and 
$u \in \Sigma_{\varepsilon , m^+_{\alpha , \beta , \ep}   + \delta_0 }$. Pick any $x\in M$.
Assume $l$ is even and divide $B_g(x,r)$ into $l/2$ annular regions of radius $(2/l)r$. 
Call $A_j$, $j=1,\ldots,l/2$ each of these
regions. 

For any nonegative integrable function $f$ on $B(x,r)$ we have
$$
\int_{B_g(x,r)} f dv_g = \sum_{j=1}^{l/2} \int_{A_j} f dv_g .
$$

Then $\# \{ j: \int_{A_j} f dv_g \geq (8/l) \int_{B_g(x,r)} f dv_g  \}  \leq  l/8$. Applying this observation
to $(u^+ ) ^{q+1} ,  (\Delta_g u )^2 $ and $ | \nabla u |^2 $ we see that  there exists
$j>1$ such that
\begin{equation}\label{1}
\int_{A_j }  (\Delta_g u )^2  dv_g \leq \frac{8}{l}  \int_M   ( \Delta_g u )^2 dv_g <  (1- \eta )
 \int_M   (\Delta_g u)^2 dv_g ,
\end{equation}

\begin{equation}\label{11}
\int_{A_j }  |  \nabla u |^2  dv_g \leq \frac{8}{l}  \int_M   |  \nabla u |^2 dv_g <  (1- \eta ) \int_M   |  \nabla u |^2 dv_g 
\end{equation}
and
\begin{equation}\label{111}
\int_{A_j }   (u^+ ) ^{q+1}  dv_g \leq \frac{8}{l}  \int_M    (u^+ ) ^{q+1} dv_g <  (1- \eta ) \int_M    (u^+ ) ^{q+1}     dv_g .
\end{equation}

Note that if $i=2j-1$ then outside of $A_j$ we have that 
$u^+ = ( u_{1,i,l}^{r,x} )^+   +  (u_{2,i,l}^{r,x} )^+ $. Therefore from (\ref{111}) we get 
\begin{equation}\label{5.4}
\big\| ( u_{1,i,l}^{r,x} )^+   +  (u_{2,i,l}^{r,x} )^+   \big \|_{q+1}^{q+1} > \eta \| u^+ \|_{q+1}^{q+1} .
\end{equation}

Note that  the gradients and Laplacians of $  u_{1,i,l}^{r,x} $ and $  u_{2,i,l}^{r,x} $ also have disjoint supports.
Also note that  $ | u_{1,i,l}^{r,x}  |  + |  u_{2,i,l}^{r,x}  | \leq  | u |$ and away from $A_j$, $u =  u_{1,i,l}^{r,x}  + u_{2,i,l}^{r,x} $.

We have
\begin{equation*}
\int_{A_j}  (\Delta_g (  u_{1,i,l}^{r,x}  + u_{2,i,l}^{r,x} ) )^2 dv_g =
\int_{A_j}  (\Delta_g  u_{1,i,l}^{r,x}    )^2 dv_g   +
\int_{A_j}  (\Delta_g  u_{2,i,l}^{r,x} )^2 dv_g
\end{equation*}
and
\begin{equation*}
\int_{A_j}  (\Delta_g (  u_{1,i,l}^{r,x} ) )^2 dv_g \begin{array}[t]{lll}
\displaystyle =\int_{A_j} \left(  \varphi_{i,l}^{x,r} \Delta_g u +
u \Delta_g  \varphi_{i,l}^{x,r}  +2 \langle \nabla  \varphi_{i,l}^{x,r} , \nabla u \rangle \right)^2
dv_g \medskip\\
\displaystyle 
\leq 3 \int_{A_j}  \big(( \varphi_{i,l}^{x,r} )^2 (\Delta_g u )^2 + u^2 (\Delta_g  \varphi_{i,l}^{x,r} )^2
+4 | \nabla u |^2 | \nabla  \varphi_{i,l}^{x,r} |^2  \big)\,dv_g\medskip\\
\displaystyle 
\leq 3 \int_{A_j} (\Delta_g u)^2 dv_g + 3 \Big(9 \frac{l^2}{r^2} \Big)^2 \int_{A_j} u^2 dv_g +12 
\Big(3\frac{l}{r}\Big)^2 
\int_{A_j} | \nabla u  |^2 dv_g \end{array}
\end{equation*}

A similar inequality is obtained for $u_{2,i,l}^{r,x}$ and we have:
\begin{equation*}
\int_{A_j}  (\Delta_g (  u_{1,i,l}^{r,x}  + u_{2,i,l}^{r,x} ) )^2 dv_g  \leq 
6 \int_{A_j} (\Delta_g u)^2 dv_g + 6 \Big(9 \frac{l^2}{r^2} \Big)^2
 \int_{A_j} u^2 dv_g +24 \Big(3\frac{l}{r}\Big)^2
\int_{A_j} | \nabla u  |^2 dv_g 
\end{equation*}

Using (\ref{1}) we obtain 
\begin{equation*}\label{equ511}
\begin{array}[t]{ll}
\displaystyle \int_{M}  (\Delta_g (  u_{1,i,l}^{r,x}  + u_{2,i,l}^{r,x} ) )^2 dv_g 
\medskip\\
\displaystyle =\int_{A_j}  (\Delta_g (  u_{1,i,l}^{r,x}  + u_{2,i,l}^{r,x} ) )^2 dv_g + \int_{M\setminus A_j}  (\Delta_g (  u_{1,i,l}^{r,x}  + u_{2,i,l}^{r,x} ) )^2 dv_g 
\medskip\\
\displaystyle
 \leq 
(6-6\eta ) \int_{M} (\Delta u)^2 dv_g + \int_{M} (\Delta u)^2 dv_g
+
6 \Big(9 \frac{l^2}{r^2} \Big)^2  \int_{M} u^2 dv_g +24 \Big(3\frac{l}{r}\Big)^2  
\int_{M} | \nabla u  |^2 dv_g .
\end{array} 
\end{equation*}

We have
$$\int_{A_j} |   \nabla ( u_{1,i,l}^{r,x}  + u_{2,i,l}^{r,x}  ) |^2  \  dv_g  =  \int_{A_j} |  \nabla   u_{1,i,l}^{r,x}   |^2   \  dv_g \ +  \     \int_{A_j}    |  \nabla    u_{2,i,l}^{r,x} |^2
\ dv_g$$
and
$$  \int_{A_j}  |  \nabla   u_{1,i,l}^{r,x}   |^2  \ dv_g  \begin{array}[t]{lll}
 \displaystyle = \int_{A_j}  |   \varphi_{i,l}^{x,r}  \nabla u + u \nabla  \varphi_{i,l}^{x,r}  |^2  \ dv_g \medskip\\
 \displaystyle  \leq  \ 2 \int_{A_j} \left( (  \varphi_{i,l}^{x,r} )^2 |\nabla u |^2 +  u^2 |   \nabla  \varphi_{i,l}^{x,r}  |^2  \right)  dv_g\medskip\\
 \displaystyle \leq 2 \int_{A_j} \left(  | \nabla u |^2 +9 \frac{l^2}{r^2} u^2 \right)  dv_g .\end{array}$$

The same estimate can be carried out for $\int_{A_j} | \nabla  u_{2,i,l}^{r,x}  |^2   \ dv_g $. Hence
$$\int_{A_j} |   \nabla ( u_{1,i,l}^{r,x}  + u_{2,i,l}^{r,x}  ) |^2  \  dv_g  \leq 
4 \int_{A_j} \left(  | \nabla u |^2 +9 \frac{l^2}{r^2} u^2 \right)  \ dv_g .$$

Hence, we obtain  from (\ref{11})
$$\int_{M} |   \nabla ( u_{1,i,l}^{r,x}  + u_{2,i,l}^{r,x}  ) |^2  \  dv_g  
\begin{array}[t]{lll} \displaystyle =\int_{A_j} |   \nabla ( u_{1,i,l}^{r,x}  + u_{2,i,l}^{r,x}  ) |^2  \  dv_g  
+\int_{M\setminus A_j} |   \nabla ( u_{1,i,l}^{r,x}  + u_{2,i,l}^{r,x}  ) |^2  \  dv_g  \medskip\\
\displaystyle 
\leq (4-4 \eta ) \int_M | \nabla u |^2 dv_g + \int_{M\setminus A_j} |   \nabla u |^2 dv_g 
+ 36\frac{i^2}{r^2} \int_M u^2 dv_g \medskip\\
\displaystyle\leq (5-4 \eta ) \int_M | \nabla u |^2 dv_g
+ 36\frac{i^2 }{r^2 } \int_M u^2 dv_g .\end{array}$$

We have:
$$\begin{array}[t]{lll}\displaystyle  \int_M \big( \ep^4  \big( \Delta_g   ( u_{1,i,l}^{r,x}  + u_{2,i,l}^{r,x}  ) \big)^2 
+\beta  \ep^{2} | \nabla  ( u_{1,i,l}^{r,x}  + u_{2,i,l}^{r,x}  ) |^2
+\alpha   ( u_{1,i,l}^{r,x}  + u_{2,i,l}^{r,x}  )^2 \big)\, dv_g\medskip\\
\qquad \displaystyle\leq 
(7-6 \eta) \int_M \ep^4 ( \Delta_g u )^2 dv_g + O(\ep^4  ) \int_M\big( | \nabla u |^2 +u^2 \big)\, dv_g \medskip\\
\displaystyle\qquad\qquad+ (5-4 \eta ) \ep^2  \int_M | \nabla u |^2 dv_g
+ O(\ep^2 ) \int_M u^2 dv_g + \alpha
\int_M u^2 dv_g. \end{array}
$$

Therefore, for the values of  $\eta$ and $l$ we have already fixed, there is $\ep_0 >0$ such that for any $\ep \in (0,\ep_0 )$ 
$$ \begin{array}[t]{lll}\displaystyle \int_M \big( \ep^4  \big( \Delta_g   ( u_{1,i,l}^{r,x}  + u_{2,i,l}^{r,x}  )\big )^2 
+\beta  \ep^{2} | \nabla  ( u_{1,i,l}^{r,x}  + u_{2,i,l}^{r,x}  ) |^2
+\alpha   ( u_{1,i,l}^{r,x}  + u_{2,i,l}^{r,x}  )^2 \big)\, dv_g\medskip\\
\qquad \displaystyle\leq (9-8\eta) \int_M \big(\ep^4 (\Delta_g u )^2 + \beta  \ep^2 | \nabla u |^2 + \alpha u^2 \big) \,dv_g .\end{array} $$

It follows from the previous inequality, the formula for $\lambda^{g, \ep ,+}$  and (\ref{5.4}) that
$$\lambda^{g,\ep, +}   (  u_{1,i,l}^{r,x}  + u_{2,i,l}^{r,x}  ) \leq   { \left( \frac{9- 8 \eta }{\eta}  \right)  }^{\frac{1}{q-1}} . $$

Then, for $\overline{u} = \lambda^{g, \ep ,+} (   u_{1,i,l}^{r,x}  + u_{2,i,l}^{r,x}  ) (   u_{1,i,l}^{r,x}  + u_{2,i,l}^{r,x}  ) \in \mathcal{N}_g^+ (\alpha , \beta , q, \ep)$, we have
$$ J^{g, \varepsilon }_{\alpha , \beta } (\overline{u} ) = $$
$$
\begin{array}[t]{ll}
\displaystyle 
  \frac{ ( \lambda^{g,\ep ,+} )^2  ( u_{1,i,l}^{r,x}  + u_{2,i,l}^{r,x}  )} {2 \varepsilon^n}    
 \int_M   \big(  \ep^4  ( \Delta   ( u_{1,i,l}^{r,x}  + u_{2,i,l}^{r,x}  ) )^2 
+\beta  \ep^{2} | \nabla  ( u_{1,i,l}^{r,x}  + u_{2,i,l}^{r,x}  ) |^2
+\alpha   ( u_{1,i,l}^{r,x}  + u_{2,i,l}^{r,x}  )^2 \big)\, dv_g  \medskip\\
\displaystyle  \leq   { \left( \frac{9- 8 \eta }{\eta}  \right)  }^{\frac{2}{q-1}}    (9- 8\eta )
 J^{g, \varepsilon }_{\alpha , \beta } (u).\end{array} $$

So if $u \in \Sigma^{\alpha , \beta}_{\varepsilon , m^+_{\alpha , \beta , \ep}  + \delta_0  }$ 
we have that 
$$ E^{\varepsilon , +}_{\alpha , \beta , q} (\overline{u} )
\leq   { \left( \frac{9- 8 \eta }{\eta}  \right)  }^{\frac{2}{q-1}}    (9- 8\eta )
E^{\varepsilon , +}_{\alpha , \beta , q} (u),$$
and  by (\ref{5.1}) we obtain
$$ E^{\varepsilon , +}_{\alpha , \beta , q} (\overline{u} ) <  m^{g,+}_{\alpha , \beta , \ep}  + 2 \delta_0.$$
\end{proof}

\begin{remark}\label{remark5}
On any closed Riemannian manifold for any $\varepsilon >0$  there is a set of points $x_j$, $j=1,...,K_{\varepsilon}$ such that
the closed balls $\overline{   B_g(x_j , \varepsilon )   }$ are disjoint, and the set is maximal under this condition. It follows that the balls $\overline{ B_g(x_j , 2 \varepsilon ) }$ cover $M$.
It is easy to construct closed sets $A_j$, $ \overline {B_g(x_j ,  \varepsilon) } \subset A_j \subset  
\overline{ B_g(x_j , 2 \varepsilon ) }$ which cover $M$ and only intersect in their
boundaries. Moreover one can see by a volume comparison argument that if $\varepsilon$ is small enough there is a constant $K$, independent of
$\varepsilon$,  such that for any point in $M$ belongs to at most $K$ of the balls $B_g(x_j , 3\varepsilon )$.
($K$ would be approximately the maximal number of disjoint  balls of radius $1/4$ in the Euclidean
space  contained in the ball of radius $1$).
\end{remark}

The next result shows that for $\ep$ small there is a small ball which contains a meaningful part of
$u^+$. It is similar to the results in \cite[Lemma 5.3]{Benci} and  \cite[Proposition 3.6]{Petean} for a similar
situation for (second order) Yamabe-type equations. 
\begin{proposition} There exist $\gamma >0$ and $\varepsilon_0 >0$ such that for any
$\varepsilon \in (0,\varepsilon_0 )$ if $u\in  \mathcal{N}_g^+ (\alpha , \beta , q, \ep)$ then 
there exists $x \in M$ such that 
$$\varepsilon^{-n} \int_{B_g(x, 2 \varepsilon )} (u^+ )^{q+1} dv_g  \geq \gamma. $$
\end{proposition}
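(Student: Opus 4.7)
The plan is to argue by contradiction: I will show that if $\gamma>0$ is small enough and $\ep$ is small enough, then no $u\in\mathcal{N}^+_g(\alpha,\beta,q,\ep)$ can have $\ep^{-n}\int_{B_g(x,2\ep)}(u^+)^{q+1}\,dv_g<\gamma$ for every $x\in M$. The underlying idea is that on each small ball a local $H^2$-Sobolev inequality (with constant close to the Euclidean $\mathbf{Y}^{\ep}_{\alpha,\beta,q}$) lets one bound the local $L^{q+1}$ mass by the local Dirichlet-type energy; summing these bounds and using the Nehari identity then produces a self-bounding inequality that would force the global $L^{q+1}$ mass to vanish.

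First, using Remark \ref{remark5}, I pick a maximal $\ep$-separated set $\{x_j\}_{j=1}^{K_\ep}\subset M$ so that the balls $B_g(x_j,2\ep)$ cover $M$, together with the closed sets $A_j$ with $A_j\subset B_g(x_j,2\ep)$ that partition $M$ up to their boundaries; each point of $M$ belongs to at most $K$ of the enlarged balls $B_g(x_j,3\ep)$, with $K$ independent of $\ep$. Set
\[M_j := \ep^{-n}\int_{B_g(x_j,2\ep)} (u^+)^{q+1}\,dv_g, \qquad E_j := \ep^{-n}\int_{B_g(x_j,3\ep)}\bigl(\ep^4 (\Delta_g u)^2+\ep^2\beta|\nabla u|^2+\alpha u^2\bigr)\,dv_g.\]

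Next, choose cutoffs $\chi_j:M\to[0,1]$ with $\chi_j\equiv 1$ on $B_g(x_j,2\ep)$, $\mathrm{supp}(\chi_j)\subset B_g(x_j,3\ep)$, and $|\nabla\chi_j|\leq C/\ep$, $|\Delta_g\chi_j|\leq C/\ep^2$. Reading $\chi_j u$ in normal coordinates centered at $x_j$, applying the Euclidean inequality \eqref{3.7} to its pull-back, and comparing $g$ with the Euclidean metric on small balls exactly as in the proof of Theorem \ref{theorem4} (together with a product-rule expansion of $\Delta_g(\chi_j u)$ following the pattern of Lemma \ref{lemma41}), I obtain a constant $C_1>0$, independent of $\ep$ and of $j$, such that
\[M_j^{2/(q+1)} \leq \bigl(\ep^{-n}\|(\chi_j u)^+\|_{q+1}^{q+1}\bigr)^{2/(q+1)} \leq C_1 E_j.\]

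Finally, write $M_j = M_j^{(q-1)/(q+1)}\cdot M_j^{2/(q+1)}$ and combine with the hypothetical bound $M_j<\gamma$ to obtain $M_j \leq C_1\gamma^{(q-1)/(q+1)}E_j$. Summing over $j$, invoking the bounded overlap, and using the Nehari identity $\ep^{-n}\int_M\bigl(\ep^4(\Delta_g u)^2+\ep^2\beta|\nabla u|^2+\alpha u^2\bigr)\,dv_g = \ep^{-n}\int_M(u^+)^{q+1}\,dv_g$ yields
\[\ep^{-n}\int_M (u^+)^{q+1}\,dv_g \leq \sum_j M_j \leq C_1 K\, \gamma^{(q-1)/(q+1)}\, \ep^{-n}\int_M (u^+)^{q+1}\,dv_g.\]
Since $u^+\neq 0$, both sides are strictly positive, so $1\leq C_1 K\gamma^{(q-1)/(q+1)}$; taking $\gamma := (2C_1K)^{-(q+1)/(q-1)}$ gives the contradiction. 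The main technical obstacle is guaranteeing that the local Sobolev constant $C_1$ can be chosen uniformly in $j$ and in small $\ep$; this requires tracking the $C^1$-distortion between $g$ and the Euclidean metric on geodesic balls of radius $\sim\ep$ together with the $\ep^{-1},\ep^{-2}$ derivative bounds on $\chi_j$, but the scalings match exactly as in the proofs of Lemma \ref{lemma41} and Theorem \ref{theorem4}, so the error terms are absorbed into the main ones.
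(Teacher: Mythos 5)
Your proof is correct and follows essentially the same strategy as the paper's: cover $M$ by balls $B_g(x_j,2\ep)$ with bounded overlap (Remark \ref{remark5}), apply a scaled local $H^2$-Sobolev inequality to $\chi_j u$ (the paper's $u_{j,\ep}$), absorb the $\ep^{-1},\ep^{-2}$ cutoff derivatives into the rescaled energy, and close the loop via the Nehari identity. The only cosmetic difference is that you package the conclusion as a contradiction with a hypothetical uniform bound $M_j<\gamma$, whereas the paper directly isolates $\Gamma=\max_j\ep^{-n(q-1)/(q+1)}\|u_j\|_{q+1}^{q-1}$ and bounds it below; the two presentations are logically equivalent.
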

\begin{proof}
Let us consider $\varepsilon_0 >0$ such that for $\varepsilon < \varepsilon_0$ one can construct sets like in the previous remark. 

Consider $u\in \mathcal{N}_g^+ (\alpha , \beta , q, \ep)$. 
Let $u_j = u^+   \chi_{A_j}$ be the
restriction of $u^+$ to $A_j$ (extended by 0 away from $A_j$ in order to consider it as a function on $M$). Then
\begin{equation}\label{equ512}\varepsilon^{-n} \int_M  \big(  \ep^4 ( \Delta_g u )^2  +
 \beta \ep^2   | \nabla  u |^2  +\alpha u^2 \big)\, dv_g
 \begin{array}[t]{ll}
 \displaystyle = \varepsilon^{-n}
  \int_M (u^+ )^{q+1} dv_g \medskip\\
  \displaystyle 
  =\sum_{j=1}^{K_{\ep}}   \  ( \varepsilon^{- \frac{n(q-1)}{q+1}  }     \| u_j \|_{q+1}^{q-1}  ) \  (    \varepsilon^{-\frac{2n}{q+1}}   \| u_j \|^2_{q+1} )\medskip\\
  \displaystyle 
  \leq \left( \max_j   {\varepsilon^{-\frac{n(q-1)}{q+1}}}   {   \| u_j \|_{{q+1}}^{q-1} } \right)
\sum_{j}   \  { \varepsilon^{-\frac{2n}{q+1}}}   { \| u_j \|^2_{q+1 } }   .
\end{array}
\end{equation} 
Let
$$\Gamma =  \max_j   {\varepsilon^{-\frac{n(q-1)}{q+1}}}   {   \| u_j \|_{q+1}^{q-1} }.$$
Note that we have to prove that there exists a positive constant $\gamma$, independent of $\ep$ and $u$ such that 
$$\Gamma^{\frac{q+1}{q-1}}  \geq \gamma .$$
To obtain such a lower bound for $\Gamma$ we   have to bound from above each of the 
terms $ \varepsilon^{-\frac{2n}{q+1} }   \| u_j \|_{{q+1}}^2  $.  

We first point out that from the local, second order, Sobolev inequalities, there is a constant
$C$ independent of $\ep$ and $x$ such that for any $x\in M$, $v\in H_0^2 (B_g(x,\ep ) )$,
$$
\ep^{\frac{-2n}{q+1} } \left(  \int_{B_g(x,\ep )}  |v|^{q+1} dv_g  \right)^{\frac{2}{q+1}} \leq  C \ep^{-n} \int_{B_g(x,\ep )} 
  \big(\ep^4 ( \Delta_g v )^2  +
 \beta \ep^2   | \nabla  v |^2  +\alpha v^2 \big)\,dv_g .
$$

Now, let $\varphi_{\varepsilon}$ be a cut-off function on $\re^n$ which is 1 in $B_g(0, 2 \varepsilon )$, vanishes away from
$B_g(0, 3 \varepsilon )$, $| \nabla \varphi_{\varepsilon} | \leq  3/\varepsilon$ and
$| \Delta \varphi_{\varepsilon} |  \leq 9/\ep^2 $ in the intermediate annulus.

Define, for $j=1 \dots K_{\varepsilon}$,
$$ u_{j, \varepsilon}  (x) = u  (x)\, \varphi_{\varepsilon} (d(x, x_j )).$$

Since $u_j \leq  | u_{j, \varepsilon} | $ we have that 
\begin{equation}\label{Jloc0}
\varepsilon^{-\frac{2n}{q+1}} \| u_j \|_{{q+1}}^2 \leq   \varepsilon^{-\frac{2n}{q+1}} \| u_{j , \varepsilon}   \|_{{q+1}}^2
\end{equation}
and by using the Sobolev inequalities we obtain that, for some constant $C$,
\begin{equation}\label{Jloc}  \varepsilon^{-\frac{2n}{q+1}} \| u_{j , \varepsilon}   \|_{{q+1}}^2  \leq C \varepsilon^{-n}  \int_{B_g(x_j,3\ep )} 
 \big( \ep^4 ( \Delta_g u_{j,\ep} )^2  +
 \beta \ep^2   | \nabla  u_{j,\ep} |^2  +\alpha u_{j,\ep}^2 \big) \,dv_g  .
 \end{equation}
Since $ | u_{j, \varepsilon} |  \leq | u | $ we have that
$$\varepsilon^2 | \nabla u_{j,\varepsilon}  |^2  \leq 2\varepsilon^2 | \nabla u |^2 + 18 (u )^2 $$
and
\begin{equation*}
\ep^4 (\Delta_g u_{j , \varepsilon}  )^2 \leq 3 \ep^4 (\Delta_g u )^2 + 108 \ep^2  | \nabla u |^2 
+243 u^2 .
\end{equation*}
 Since $\alpha$, $\beta$ are fixed positive constants and any
point $x\in M$ could be in at most $K$ of the balls $B_g(x_j ,3\ep )$ 
(and therefore, it has an open neighbourhood which can only intersect the support of the
corresponding $\nabla u_{j,\varepsilon} $'s), it follows from the previous inequalities that 
there is a constant $E$ (independent of $\ep$) such that 
\begin{equation}\label{Jloc2}
 \sum_j \int_{B_g(x_j ,3\ep )} 
\big(\ep^4 (\Delta_g u_{j , \varepsilon}  )^2 + \ep^2  \beta | \nabla u_{j , \varepsilon}  |^2
+\alpha  u_{j , \varepsilon} ^2\big)\, dv_g \leq E \int_M \big(\ep^4 (\Delta_g u)^2 +\ep^2 \beta
| \nabla u |^2 + \alpha u^2\big)\, dv_g .
\end{equation}
Then, from (\ref{equ512}), (\ref{Jloc0}), (\ref{Jloc}) and (\ref{Jloc2}), we get 
\begin{equation*}
\varepsilon^{-n} \int_M\big(  \ep^4 (\Delta_g u)^2 +\ep^2 \beta
| \nabla u |^2 + \alpha u^2\big)\, dv_g   \leq \Gamma C  E 
\varepsilon^{-n} \int_M \big(  \ep^4 (\Delta_g u)^2 +\ep^2 \beta
| \nabla u |^2 + \alpha u^2 \big)\,dv_g 
\end{equation*}

Therefore
$$\Gamma \geq \frac{1}{ C E}$$
and we can take
$$\gamma =  {  \left( \frac{1}{C E} \right) }^{\frac{q+1}{q-1} }  .$$
\end{proof}
Finally we can prove Theorem \ref{theorem5.2}
\begin{proof}[Proof of Theorem   \ref{theorem5.2}] Assume the theorem is  not true.  Then there exist  $\eta <1$ and  sequences of positive numbers  $\varepsilon_j \rightarrow 0$, $\delta_j \rightarrow 0$, and
$u_j \in \Sigma^{\alpha , \beta}_{\varepsilon_j , {m}^{g,+}_{\alpha , \beta , \varepsilon_j } + \delta_j }$
such that for any
$x\in M$ one has 
$$
\int_{B_g(x,r)}   (u_j^+ )^{q+1} < \eta \int_M (u_j^+ )^{q+1} .
$$
For each $j$ large enough the previous Proposition  provides  $x_j \in M$ such that, for some fixed $\gamma >0$,
$$ 
\varepsilon_j^{-n} \int_{B_g(x_j , 2 \varepsilon_j )}  (u_j^+ )^{q+1} \  dv_g  \geq \gamma .
$$
Lemma \ref{lema67}  then gives a function $\overline{u_j} = u_{j,1} + u_{j,2}$  such that $\overline{u_j} \in  
 \Sigma^{\alpha , \beta}_{\varepsilon_j , {m}^{g,+}_{\alpha , \beta , \varepsilon_j } + 2 \delta_j }$,
$ u_{j,1}$ is supported inside the ball of radius $r$ centered at $x_j$, $ u_{j,1}$ and $ u_{j,2}$ have disjoint support, $ u_j  =  u_{j,1 }$ in $B_g(x_j , \varepsilon_j )$ and $u_j = u_{j,2}$
outside $B_g(x_j , r )$.

We have that
$$ 
\varepsilon_j^{-n} \int_{M}  (u_{j,1}^+ )^{q+1} \  dv_g  \geq \gamma 
$$
and, using (\ref{equivalencias}),
$$ 
\varepsilon_j^{-n} \int_{M}  (u_{j,2}^+ )^{q+1} \  dv_g >  \varepsilon_j^{-n} (1 - \eta )   \int_M (u_j^+ )^{q+1} \  dv_g    \geq (1- \eta )  \frac{2(q+1)}{q-1} {m}^{g,+}_{\alpha , \beta ,  \varepsilon_j }  .
$$
Then  it follows from Corollary \ref{coro65}  that there exists $\delta >0$, independent of $j$, such that
$$
E^{\varepsilon_j ,+}_{\alpha , \beta ,q} (\overline{u_j}  ) \geq  \Psi (\delta ) 
{m}^{g,+}_{\alpha , \beta , \varepsilon}.
$$
But for $j$ large enough we have that 
$$
E^{\varepsilon_j ,+}_{\alpha , \beta ,q} (\overline{u_j}  )  < 
{m}^{g,+}_{\alpha , \beta , \varepsilon_j }  + 2\delta_j  
< \Psi (\delta ) {m}^{g,+}_{\alpha , \beta , \varepsilon_j} ,
$$
reaching a contradiction.
\end{proof}

\section{The Lusternik-Schnirelmann category of $\Sigma_{\ep, {\bf m}_{\ep} + \delta} $}\label{section7}

In this section we   prove:
\begin{theorem}\label{theorem7} There exist $\ep_0 >0$,   $\delta_0 >0$ such that
for any $\ep \in (0, \ep_0 )$,  we have
$$\mathrm{Cat}(\Sigma_{\ep, {\bf m}_{\ep} + \delta_0 } ) \geq \mathrm{Cat}(M).$$
\end{theorem}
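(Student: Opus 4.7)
The plan is to implement the Benci–Cerami photography method that the introduction outlined: produce a continuous map $\mathbf{p}:M\to\Sigma_{\ep,\mathbf{m}_\ep+\delta_0}$ (the photograph) and a continuous center of mass $\mathbf{b}:\Sigma_{\ep,\mathbf{m}_\ep+\delta_0}\to M$ such that $\mathbf{b}\circ\mathbf{p}$ is homotopic to $\mathrm{id}_M$. Once this is in place, the standard Lusternik–Schnirelmann category estimate (if $f\circ g\simeq \mathrm{id}_X$ then $\mathrm{Cat}(X)\le \mathrm{Cat}(Y)$) yields the theorem.

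For $\mathbf{p}$, I would simply take $\mathbf{p}=\mathbf{i}$ from Definition \ref{definition4.4}, with $\alpha=a_{N,m}$, $\beta=b_{N,m}$ and $s<r_0(M,g)$ chosen small. Continuity in $x$ is clear from the construction (the cutoff $\varphi_s$ is radial, the exponential map is smooth, and $\lambda^{g,\ep,+}$ depends continuously on its argument). By Proposition \ref{propo53} we have $E^{\ep,+}_{\alpha,\beta,q}(\mathbf{i}(x))\to m_{\alpha,\beta}$ uniformly in $x\in M$ as $\ep\to 0$; combining this with Theorem \ref{theorem5} and Remark \ref{remark1} (which lets us trade the full $J^{g,+}_{\ep,q}$ for $E^{g,\ep,+}_{\alpha,\beta,q}$ up to a multiplicative factor $(1+\delta)$ and the lower order perturbations by $f_0,f_2,\varphi$, which are $O(\ep^2)$ on $\mathbf{i}(x)$) we obtain that for any $\delta_0>0$ there is $\ep_0>0$ with $J^{g,+}_{\ep,q}(\mathbf{p}(x))\le \mathbf{m}_\ep+\delta_0$ for all $\ep\in(0,\ep_0)$ and all $x\in M$. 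Hence $\mathbf{p}$ maps into $\Sigma_{\ep,\mathbf{m}_\ep+\delta_0}$.

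For $\mathbf{b}$, I would fix a smooth isometric embedding $\iota:M\hookrightarrow\mathbb{R}^K$ and, for $u\in\Sigma_{\ep,\mathbf{m}_\ep+\delta_0}$, set
\begin{equation*}
\widetilde{\mathbf{b}}(u):=\frac{\int_M \iota(y)\,(u^+(y))^{q+1}\,dv_g(y)}{\int_M (u^+(y))^{q+1}\,dv_g(y)}\in\mathbb{R}^K.
\end{equation*}
Pick a tubular neighborhood $U$ of $\iota(M)$ and a smooth retraction $\pi:U\to \iota(M)$. The goal is to show that for $\ep_0,\delta_0$ sufficiently small, $\widetilde{\mathbf{b}}(u)\in U$ for every $u\in \Sigma_{\ep,\mathbf{m}_\ep+\delta_0}$, so that $\mathbf{b}:=\iota^{-1}\circ\pi\circ\widetilde{\mathbf{b}}$ is well defined and continuous. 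This is exactly what Theorem \ref{theorem5.1} gives us: for any $\eta<1$ and $r>0$ we can choose $\ep_0,\delta_0$ so that, for every such $u$, there is $x_u\in M$ with $\int_{B_g(x_u,r)}(u^+)^{q+1}\ge \eta\int_M(u^+)^{q+1}$. Splitting the integral defining $\widetilde{\mathbf{b}}(u)$ into the part over $B_g(x_u,r)$ and its complement, and using that $\iota$ is bounded on $M$, one estimates $|\widetilde{\mathbf{b}}(u)-\iota(x_u)|\le Cr+C(1-\eta)$; fixing $r$ small and $\eta$ close to $1$ places $\widetilde{\mathbf{b}}(u)$ within $U$.

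Finally, for the homotopy, the photograph $\mathbf{p}(x)$ itself is concentrated at $x$ in the strongest possible sense: the integrand $(\mathbf{p}(x)^+)^{q+1}$ is supported in $B_g(x,s/2)$ because of the cutoff $\varphi_s$ in $\overline{U}^{\ep,s}_{\alpha,\beta,x}$. Consequently $|\widetilde{\mathbf{b}}(\mathbf{p}(x))-\iota(x)|\to 0$ as $s\to 0$ uniformly in $x$, and by first fixing $s$ small and then $\ep$ small we ensure $\mathbf{b}\circ\mathbf{p}$ is arbitrarily $C^0$-close to $\mathrm{id}_M$, hence homotopic to it via straight-line interpolation in $\mathbb{R}^K$ composed with $\iota^{-1}\circ\pi$. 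The main technical nuisance, which I expect to be the only subtlety, is ensuring the various smallness parameters $r$, $\eta$, $s$, $\delta_0$, $\ep_0$ are chosen in the correct order so that the concentration result controls both the well-definedness of $\mathbf{b}$ and the proximity of $\mathbf{b}\circ\mathbf{p}$ to the identity simultaneously; but this is a bookkeeping matter once the estimates above are in hand. With the homotopy established, the Lusternik–Schnirelmann inequality $\mathrm{Cat}(\Sigma_{\ep,\mathbf{m}_\ep+\delta_0})\ge\mathrm{Cat}(M)$ follows immediately.
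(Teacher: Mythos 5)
Your proposal implements the same Benci--Cerami photography scheme as the paper, but the two arguments diverge in the construction of the center-of-mass map and there is one small but genuine gap in the definition of the photograph.

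On the comparison: the paper borrows a ready-made, intrinsically defined map $\mathbf{CM}(r,\eta):L^1_{r,\eta}\to M$ from \cite{Petean}, built from convexity of small geodesic balls, and then sets $\mathbf{b}(u)=\mathbf{CM}(r,0.9)((u^+)^{q+1})$; the homotopy $\mathbf{b}\circ\mathbf{p}\simeq\mathrm{id}_M$ is realized through the unique minimizing geodesics in $B_g(x,2r)$. You instead take the classical extrinsic route: embed $M\hookrightarrow\mathbb{R}^K$, average in the ambient space, and retract to $M$ through a tubular neighborhood; the homotopy is then obtained by straight-line interpolation in $\mathbb{R}^K$ followed by the retraction. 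Both are standard and correct. Your version is self-contained and does not require citing Petean's Theorem 5.2, at the cost of a little bookkeeping (the tubular-neighborhood radius has to dominate $Cr+C(1-\eta)$, and the straight segment between $\widetilde{\mathbf{b}}(\mathbf{p}(x))$ and $\iota(x)$ must stay in the tube, which does hold for $s,r$ small and $\eta$ close to $1$). The paper's version keeps everything intrinsic, which slightly shortens the argument given the cited construction.

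The gap: you set $\mathbf{p}=\mathbf{i}$, but by Definition~\ref{definition4.4}, $\mathbf{i}(x)$ lies in $\mathcal{N}(\alpha,\beta,q,\ep)$, the Nehari manifold for the \emph{model} quadratic form $J^{g,\ep}_{\alpha,\beta}$, not in $\mathcal{N}^g_\ep$, the Nehari manifold for the full Paneitz functional including the lower-order terms $\varphi$, $f_0$, $f_2$. Since $\Sigma_{\ep,\mathbf{m}_\ep+\delta_0}$ is by definition a subset of $\mathcal{N}^g_\ep$, having $J^{g,+}_{\ep,q}(\mathbf{i}(x))\le\mathbf{m}_\ep+\delta_0$ is not enough to conclude $\mathbf{i}(x)\in\Sigma_{\ep,\mathbf{m}_\ep+\delta_0}$. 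You need the rescaling the paper performs: there is a unique $\lambda=\lambda(x,\ep)>0$ with $\lambda\,\mathbf{i}(x)\in\mathcal{N}^g_\ep$, and by Remark~\ref{remark1} this $\lambda$ is close to $1$ uniformly in $x$, so the energy bound survives the rescaling; then set $\mathbf{p}(x)=\lambda\,\mathbf{i}(x)$. With that correction inserted, the rest of your argument goes through.
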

Note that then Theorem \ref{mainThm} follows from Corollary \ref{cor36} .

In the proof we   make use of the center of mass function defined in \cite[Section 5]{Petean}. We  
briefly recall the definition. For $r>0$, $\eta \in (0,1)$ let
$$L^1_{r,\eta} = \bigg\{ u \in L^1 (M) \setminus \{ 0 \} : \sup_{x\in M} \int_{B_g (x,r)} |u| \,dv_g  > \eta  \int_M |u|\, dv_g \bigg\} .$$
If $r_0 >0$ is such that for any $x\in M$ the ball $B_g(x,r_0 )$ is strongly convex then it is proved in 
\cite[Theorem 5.2]{Petean} for any $r < r_0 /2$ and any $\eta >1/2$ that there exist a continuous function
${\bf CM} (r,\mu ): L^1_{r,\mu} \rightarrow M$ such that for any  $x \in M$ such that
$\int_{B_g(x,r)} |u| dv_g  > \eta  \int_M |u| dv_g$ we have ${\bf CM} (r,\eta ) (u)  \in B_g(x,2r)$. 

\begin{proof}[Proof of Theorem \ref{theorem7}] Fix $r < r_0 /2$. Let $\eta =0.9$. Fix $\ep_0 >0$, $\delta_0 >0$ as in
Theorem \ref{theorem5.1} and pick any $\delta 
\in (0, \delta_0 )$ . Then for any $\ep \in (0, \ep_0 )$ we have that if 
$u \in \Sigma_{\varepsilon , {\bf m}_{\varepsilon} + \delta} $
then $ (u^+ )^{q+1} \in L^1_{r,\eta}$. 

\medskip

Define ${\bf b} : \Sigma_{\varepsilon , {\bf m}_{\varepsilon} + \delta} \rightarrow M$ by
${\bf b} (u) = {\bf CM} (r,0.9 )  \left(  (u^+ )^{q+1} \right)$.  Note that ${\bf b}$ is a continuous function.

\medskip

Next we consider for any $x\in M$ the function $ {\overline{U}}^{\ep , r}_{\alpha , \beta , x}$ as
in Remark \ref{remark4}, which is supported in $B_g(x,r)$. Consider as in Definition \ref{definition4.4} the function
${\bf i}(x) \in \mathcal{N}(\alpha , \beta , q, \ep)$.

Note that there exists $\lambda >0$ such that $ \lambda {\bf i}(x) \in 
\mathcal{N}^g_{\ep}$, with $\mathcal{N}^g_{\ep}$ as in (\ref{N_ep}), and
$\lambda $ is close to 1 by Remark \ref{remark1}. We can also see from Remark \ref{remark1}, Proposition \ref{propo53}, Theorem \ref{theorem4}, Theorem \ref{theorem5}  that with $\delta$ fixed we can choose $\ep_0$ smaller if necessary
so that for any $\ep \in (0,\ep_0 )$ we have that 
$ J^g_{\ep} ( \lambda {\bf i}(x) ) < {\bf m}_{\ep}  + \delta$.\smallskip

Define ${\bf  p}:M \rightarrow \Sigma_{\varepsilon , {\bf m}_{\varepsilon} + \delta}$
by ${\bf p} (x) =\lambda  {\bf i}(x)$.\smallskip 

It follows from Theorem 5.2  in \cite{Petean}  that for any $x$, ${\bf b} ({\bf p} (x)) \in
B_g(x, 2r)$. Then if follows from the election of $r$ that  for any $x\in M$, ${\bf b}
\circ {\bf p} (x)  $ is joined to $x$ by a unique minimizing geodesic contained in 
$B_g(x,2r)$. Then ${\bf b}
\circ {\bf p}$ is  homotopic to the identity of $M$ and the theorem follows.
\end{proof}

\end{document}